\pgfplotsset{compat=1.8}
\pgfplotsset{compat=1.8}
\DeclareMathOperator*{\argmax}{arg\,max}
\DeclareMathOperator*{\argmin}{arg\,min}
\newtheorem{thm}{Theorem}
\newtheorem{lemma}{Lemma}
\newtheorem{prop}{Proposition}
\newtheorem{corollary}{Corollary}
\theoremstyle{definition}
\newtheorem{remark}{Remark}
\newtheorem{definition}{Definition}[section]
\newtheorem{assumption}{Assumption}[section]
\newtheorem{example}{Example}[section]
\def\S{\mathcal S}
\def\C{\mathcal C}
\def\Re{\mathcal R}
\def\R{\mathbb{R}}
\def\Z{\mathbb{Z}}
\def\P{{\mathbb P}}     
\def\E{{\mathbb E}}
\def\Sp{\mathcal S}
\def\C{\mathcal C}
\def\Re{\mathcal R}
\def\R{\mathbb R}
\def\Z{\mathbb Z}
\def\dlim{\displaystyle \lim}
\newcommand{\TV}{\text{TV}}
\definecolor{ao(english)}{rgb}{0.0, 0.5, 0.0}
\title{Boundary-induced slow mixing for Markov chains and its application to stochastic reaction networks}
\author[1,2]{Wai-Tong (Louis) Fan}
\author[3]{Jinsu Kim}
\author[1]{Chaojie Yuan}
\affil[1]{Department of Mathematics, Indiana University,  IN, USA}
\affil[2]{Department of Organismic and Evolutionary Biology, Harvard University,  MA, USA}
\affil[3]{Department of Mathematics, POSTECH, South Korea}
\date{\today}
\begin{document}

\maketitle

\begin{abstract}
Markov chains on the non-negative quadrant of dimension $d$ are often used to model the stochastic dynamics of the number of $d$ entities, such as  $d$ chemical species in stochastic reaction networks. The infinite state space poses technical challenges, and
the boundary of the quadrant can have a dramatic effect on the long term behavior of these Markov chains. For instance,  the boundary can slow down the convergence speed of an ergodic Markov chain towards its stationary distribution due to the extinction or the lack of an entity.
In this paper, we quantify  this slow-down for a class of stochastic reaction networks and for more general Markov chains on the non-negative quadrant.
We establish general criteria for such a Markov chain to exhibit a power-law lower bound for its mixing time. The lower bound
is of order $|x|^\theta$ for all initial state $x$ on a boundary face of the quadrant, 
where  $\theta$ is characterized by the local behavior of the Markov chain near the boundary of the quadrant. A better understanding of how these lower bounds   arise leads to insights into how the structure of chemical reaction networks contributes to slow-mixing.
\end{abstract}

\section{Introduction}

It is known that steady states, or stationary distributions, is 
 a key concept in the analysis and application of Markov chains, providing essential information about the long-term behavior, performance, and optimization of systems modeled by stochastic processes. For example, researchers often design engineering systems and Markov Chain Monte Carlo (MCMC) methods to reach a target stationary distribution efficiently, or use  experimentally-measured  data over a feasibly long period of time as a proxy of the steady states \cite{gupta2014comparison, ekman1963transport}. However, if the system takes too long to reach the stationary distributions, then observed data under feasible time-scale may no longer reflect the stationary behavior of the system. Understanding \textit{how fast}
 Markov chains reach their  stationary distributions (when they exist) is crucial in scientific applications, from the development of MCMC methods and randomized algorithms 
to the design and control of engineered systems such as
bio-chemical reaction networks.

A common way to quantify how fast a Markov chain reaches a stationary distribution is through the concept of \textit{mixing time}. Let $X=(X(t))_{t\in [0,\infty)}$ be a continuous-time Markov chain which 
admits a stationary distribution $\pi$. 
Let $\delta\in (0,1)$ be a fixed number that is often chosen by the user of the Markov chain. The mixing time of  $X$ starting at $x$ with threshold $\delta$, denoted by $t^{\delta}_{\rm mix}(x)$, is the minimum time it takes for the total variation distance between the law of $X(t)$ and the stationary distribution to be smaller than  $\delta$. That is,
 \begin{align}\label{E:mixing}
t^{\delta}_{\rm mix}(x) := \inf \{t \ge 0 : \Vert P^t(x,\,\cdot\,) - \pi \Vert_{\rm TV} \le \delta\},
\end{align}
where $P^t(x,\,\cdot\,)$ is the probability distribution of the process at time $t$ with initial condition $x$ and where the total variation distance between two probability measures on a measurable space $(\mathbb{S},\mathcal{F})$ is defined as $\Vert \mu - \nu\Vert_{\TV} := \sup_{A\in \mathcal{F}}|\mu(A) - \nu(A)|$.  When the space $\mathbb{S}$ is discrete, $\|\mu-\nu\|_{\TV} = \frac12 \sum_{x\in \mathbb{S}} |\mu(x) - \nu(x)|$. In this paper, the state space of our Markov chain is always an infinite subset of $\Z_+^d$.

Studies on mixing times of Markov chains often focused on establishing upper bounds.
For the case of a finite state space, a common interest is the asymptotic behavior of the mixing times with respect to the number of states. For example, when a deck of \(n\) cards is mixed in a top-to-random manner, at least \(O(n \ln n)\) shuffles are required to completely mix the deck uniformly for all initial conditions \cite{aldous1986shuffling, levin2017markov}. For a countably infinite state space, mixing times are often studied as functions of initial conditions. For instance, for a continuous-time Markov chain modeling the copy numbers of interacting chemical species defined in \(\mathbb{Z}^d_{\ge 0}\),  the growth rate of the mixing times as a function of the initial amount of the species has been studied \cite{xu2023full,mixing_And_Kim,anderson2023new}.

While an upper bound of the mixing time does not tell us  
a definitive minimum time required for the Markov chain to reach near its stationary distribution, a lower bound will. In this sense, a lower bound can indicate how ``slowly" the Markov chain mixes.
Indeed, many biochemical systems are known or believed to converge to their stationary states slowly for various reasons. For example, the shortage of reactants can induce slow convergence to the stationary distribution in autocatalytic reaction systems \cite{awazu2010discreteness}. Another cause of slow convergence is the existence of local minima on the energy landscape, as seen in the protein folding process. Protein folding is often characterized by a rugged energy landscape with multiple local minima, causing the process to become trapped in these states for extended periods \cite{bryngelson1987spin, subramanian2020slow}. 
Knowing the minimum time to reach stationary is also important for the design of MCMC algorithms. In \cite{vialaret2020convergence}, the authors showed that
some non-reversible MCMC algorithms have the undesirable property to slow down the convergence of the Markov chain, a point which has been overlooked by the literature.

Understanding these and other ``slow-mixing" behavior  is critical for applying stochastic models in practical fields such as synthetic biology and bioinformatics,  because  understanding what and how things can go wrong (or slow) can significantly affect the design and the accurate interpretation of engineered systems. 
This paper contributes to the mathematical study of quantifying slow-mixing behaviors of Markov chains, especially those arising in biochemical reaction networks.
Our findings  offer insights into how the structure of chemical reaction networks contributes to this phenomenon, which is essential for designing more efficient systems and avoiding potential misinterpretations of long-term system behavior.

Common general methods in proving lower bounds for mixing time include the spectral and geometric (conductance) methods, which involves
finding a  cut set (a set whose removal divides the state space into two disjoint subsets) with small conductance, bounding the Cheeger's constant or establishing a log-Sobolev inequality. They also include the coupling techniques, and a general method of finding
a distinguished statistic (a real-valued function $f$)  on the state space of the Markov chain $X$ such that a distance between the distribution of
$f(X(t))$ and the distribution of $f$ under the stationary distribution can be bounded from below. See for instance \cite{levin2017markov, montenegro2006mathematical, berestycki2014lectures} for the basic ideas of these general methods. 
Furthermore, the connection between the mixing time and moments of the first-passage time, or the hitting time of the Markov process has been used to study the mixing time; see \cite{veretennikov1997polynomial, peres2015mixing, bradley2005basic}. This approach is reminiscent to choosing the distinguished statistic $f$ to be an indicator of a set for the first-passage time. The method of lifting a Markov chain also bounds the mixing times from below by other mixing times \cite{chen1999lifting, ramanan2018bounds}.

Lower bounds of mixing times were obtained for many Markov chains including  birth-death chains \cite{chen2004markov, mitrophanov2007convergence}, random walks on finite graphs \cite{lyons2017probability}, the Ising model  \cite{ding2011mixing}, titling and shuffling \cite{wilson2004mixing},  and single-site dynamics on graphs  \cite{hayes2005general}. 
In \cite{hayes2005general}, the authors used a coupling method to study Markov models related to local, reversible updates on randomly chosen vertices of a bounded-degree graph, including Glauber dynamics for $q$-colorings on $n$-vertex graph with bounded degree. In \cite{ding2011mixing}, the authors used spectral gaps to quantify the lower bounds of  mixing times for reversible Glauber dynamics for the Ising model on a finite graph. Exponentially slow mixing behaviors for some finite state chains are also studied quite recently \cite{gheissari2018exponentially,tsunoda2021exponentially}.  
Lower Bounds for mixing Coefficients of diffusion processes were also studied; see for instance \cite{kabanov2006lower}. 
However, for non-reversible Markov chains on countably infinite state spaces where stochastic calculus is not easily applicable,  
a lower bound of mixing times like \eqref{Def:slow-mixing1} has not been much explored.

In this paper, we study the concept of slow mixing for general, possibly non-reversible continuous-time Markov processes whose state space are infinite subsets of the positive quadrant $\Z_{\ge 0}^d$, where $d$ is an arbitrary integer dimension. Such processes include models for  biochemical reaction networks that describe the stochastic dynamics of the copy numbers of $d$ chemical species.  
For biochemical systems, the boundary $\partial \mathbb Z_{\ge 0}^d$ corresponds to the states at which one or more chemical species go extinct. At those states, some reactions may be shut down or turned-off and thereby lead to slow mixing. Our focus here is to study the boundary behaviors of the corresponding Markov chains.

Our general result, Theorem \ref{thm}, 
not only quantifies ``slow-mixing behavior", but also provides general criteria for such global behavior in terms of the local behavior of the Markov chain near the boundary of the state space.
Here the terminology ``slow-mixing" means that there exist constants $\theta>1$ and $C_{\delta}>0$ 
such that  
\begin{equation}\label{Def:slow-mixing1}
t^{\delta}_{\rm mix}(x) \ge C_{\delta}\,|x|^\theta   
\end{equation}
for all initial state $x$
on a boundary face of the quadrant, where $\theta$ does not depend on the choice of $\delta$ nor on $x$.
The local behavior of the Markov chain near the boundary that guarantees such slow mixing is  intuitive: that the Markov chain  is trapped ``near" the boundary of the state space for ``long" enough, where the parameter $\theta$ encodes this local behavior.
The lower bound \eqref{Def:slow-mixing1}
implies that $X$ is non $L^2$-exponentially ergodic if the tail of $\pi$ does not decay fast enough, as we precised in 
Remark \ref{rmk:nonErgodic}. 

We are not aware of such lower bound for continuous-time Markov chains with countably infinite state space in any previous work. For finite state space, there is a previously considered concept of `rapid mixing' that indicates the mixing time is of order $n\log n$ for some parameter $n$ related to the size of the state space \cite{hayes2005general}.
For the Markov chains modeling the biochemical systems considered in this paper, partly due to the infinite state space, typical approaches for lower bounds of mixing times such as the spectral gap, the coupling and the conductance methods 
do not seem to be directly applicable. Our method of proof is inspired by choosing a distinguished statistic $f$ to be an indicator of a set which has asymptotically small hitting probability for ``large time", as the initial state $x$ also tends to infinity, and quantifies the relationship among three things: the magnitude of the  hitting probability, how large is the "large time", and how fast $|x|\to\infty$.

The main application of our general result is for Markov chains modeling biochemical reaction networks such as  the following example: 
\begin{align}\label{model2}
    B \xrightleftharpoons[1]{1} 2B, \quad \emptyset \xrightleftharpoons[1]{1} A+B.
\end{align}
which describes the interaction of two chemical species ($A$ and $B$) in reaction systems.
For example, when a Poisson process assigned to the reaction $A+B\to \emptyset$ in \eqref{model2} jumps up at a random time, then one copy of species $A$ and one copy of species $B$ annihilate each other.
Under a typical choices of kinetics, the so-called `mass-action kinetics', the transition rates of the Markov chains associated with the reaction network are polynomials of the current state. 
In the literature of stochastically modeled reaction systems, one of the main interests has been identifying classes of reaction systems that admit some dynamical behaviors such as ergodicity \cite{anderson2018some, anderson2020tier, anderson2020stochastically, anderson2010product, xu2023full}, explosion \cite{anderson2018non}, exponential ergodicity \cite{anderson2023new, fan2023constrained, mixing_And_Kim, xu2023full}, non-exponential ergodicity \cite{kim2024path} and upper bounds of mixing times \cite{mixing_And_Kim, anderson2023new}. However, to our best knowledge, there was no general result that guarantees lower bounds of mixing times  for
reaction networks as considered in this work. 

Our general criteria in Theorem \ref{thm} identify a class of biochemical reaction networks whose associated Markov chains exhibit slow mixing. These networks have two features: they consist of two species and the reactions form a cycle  as in \eqref{eq:cyclicmodel}, generating paths on the boundary of the state space that satisfy our slow mixing criteria. 
This application reveals an interesting dynamical feature of stochastic reaction networks, which we called ``boundary-induced slow mixing", is not systematically studied before.
Although how the boundary of the state space slows down the mixing time is not quantified before,
the boundary is known to typically induce interesting dynamical features such as the discrepancy between deterministic and stochastic models \cite{anderson2019discrepancies, bibbona2020stationary}.
Our application to 2-dimensions stochastic reaction networks also contrasts the results in \cite{xu2023full}, where $1-$dimensional continuous-time Markov chains are classified by their long-term behaviors such as ergodicity, exponential ergodicity, and quasi-stationary distributions. This is a motivation for us to focus on $2$-dimensional Markov chains where much less is known.

Our results  shed lights on how to increase the mixing rate of a stochastic reaction network in practice, by trimming the gazillion possibilities about how we may modify the network so that it no longer satisfies the assumptions in Theorem \ref{thm}. For example, the stochastic reaction network \eqref{model2} satisfies  the assumptions in Theorem \ref{thm} and therefore exhibits slow mixing; see example  \ref{ex: toy example} for details. However, if we add in-flow and out-flow with rate 1 for both species in \eqref{model2}, then the continuous Markov chain associated with the new model has the same stationary distribution as before, but now the mixing time is much shorter. The mixing time is now bounded above by $O(|x|\log |x|)$ as $|x|\to\infty$ \cite{mixing_And_Kim}.
In Section \ref{sec: upper bound}, we obtain also an upper bound for a first passage time of the reaction networks given in \eqref{eq:cyclicmodel} which suggests that our lower bound for \eqref{eq:cyclicmodel} can be sharp, as illustrated by our simulations.
This upper bound is of independent interest and extends exiting results for passage time moments for reflected random walks in the non-negative quadrant \cite{aspandiiarov1996passage, menshikov1996passage}.

This paper is organized as follows. In Section \ref{sec:markov}, we briefly recall some basic definitions about continuous-time Markov chains and give the key conditions for our main result, Theorem \ref{thm}. In Section \ref{sec: stochastic crn}, we introduce reaction networks as our main application, and provide the class of reaction networks that admit slow mixing. The proofs of main theorems and lemmas in Sections \ref{sec:markov} and \ref{sec: stochastic crn} are given in Sections \ref{sec:proof 1}--\ref{sec:proof 3}. A table of notations and features of stationary distributions of our main application models are given in Appendix \ref{app:table} and \ref{app:stationary}, respectively.

\bigskip




\section{A general result for continuous-time Markov chains in $\Z_{\ge 0}^d$}\label{sec:markov}

Let $X=\big(X(t)\big)_{t\in \R_{\ge 0}}$ be a \textbf{continuous-time Markov process}  in the non-negative quadrant $\Z_{\ge 0}^d$ of dimension  $d\ge 2$. Roughly, for any pair of distinct states $x$ and $z$ in $\Z_{\ge 0}^d$,
\begin{equation}\label{qxz}
    \mathbb{P}(X(t+\Delta t)=z \ | \ X(t)=x)=q_{x,z}\,\Delta t + o(\Delta t),  \quad \text{as}\quad  \Delta t \to 0.
\end{equation}
where $q_{x,z}\in\R_{\ge 0}$ is a constant 
called  the transition rate  from  $x$ to $z$. 
See, for instance,
\cite{NorrisMC97} for the 
basic theory of continuous-time Markov processes. 
Let
$X^D=(X^D(k))_{k\in \Z_{\ge 0}}$ be the \textbf{embedded discrete-time Markov chain} of $X$. That is, $X^D(k):=X(T_k)$  where $T_k$ is the $k$ th jump time of $X$ for $k=1,2,3,\ldots$ and $T_0=0$.  The one-step transition probabilities of $X^D$ satisfy
\begin{align}\label{eq:transition prob of XD}
\P(X^D(k+1)=z | X^D(k)=x)=\frac{q_{x,z}}{\sum_{w\in \Z_{\ge 0}^d\setminus \{x\}} q_{x,w}}
\end{align}
for any pair of distinct states $x$ and $z$, whenever  $\displaystyle \sum_{w\in \Z_{\ge 0}^d\setminus \{x\}} q_{x,w}>0$. 


For a sequence  $\eta=(\eta_i)_{i=1}^L$ of elements in $\Z^d$, we let $|\eta|:=L$ be its length and  $E_{\eta}$ be the event that the first $|\eta|$ steps of  $X^D$ follow $\eta$. That is,
        \begin{align}\label{eq:dominantcycle}
           E_{\eta}:=\{X^D(i)=X^D(i-1)+\eta_i \quad \text{for } i=1,2,\dots,|\eta|\}.
        \end{align}


Let $\P_x$ be the probability measure under which $X(0)=x$ almost surely (i.e. starting at $x$). Under $\P_{x}$,  $E_{\eta}$  is the event that the trajectory of $X^D$ during the first $|\eta|$ steps is the directed path in $\Z^d$ that starts at $x\in\Z^d$ and have increments  $(\eta_i)_{i=1}^{|\eta|}$.
Let
\begin{equation}\label{Def:gammaeta}
\gamma_{\eta}:=\left({\bf 0},\, \eta_1,\, \eta_1+\eta_2, \,\ldots,\,\sum_{i=1}^{|\eta|} \eta_i\right)
\end{equation}
be the directed path in $\Z^d$ that starts at ${\bf 0}\in\Z^d$ and have increments  $(\eta_i)_{i=1}^{|\eta|}$. The path $\gamma_{\eta}$ is a cycle if and only if $\sum_{i=1}^{|\eta|}\eta_i={\bf 0}$. 

\begin{example}
For the sequence 
$$\eta=\left\{
\begin{pmatrix}
    1 \\ 1
    \end{pmatrix},\;
    \begin{pmatrix}
    -1 \\ -1
    \end{pmatrix}
\right\},$$
the path $\gamma_{\eta}=\big({\bf 0},\, (1,1),\, {\bf 0} \big)$ is a cycle. Under  $\P_{(n,0)}$,  the event $E_{\eta}$ defined in \eqref{eq:dominantcycle}  is the event that the trajectory of $X^{D}$ during the first 2 steps is the directed path $\big( (n,0),\, (n+1,1),\, (n,0) \big)$.   \hfill $\triangle$
\end{example}

We shall consider the boundary face $\{x_d=0\}\cap \Z_{\ge 0}^d$ of the non-negative quadrant and we let  $\|x\|_{\infty}=\max_{1\leq i\leq d}|x_i|$ where $x=(x_1,x_2,\dots,x_d) \in \mathbb Z^d_{\ge 0}$
Clearly, $\{x_d=0\}\cap \Z_{\ge 0}^d=\cup_{n=0}^{\infty} \mathbb{I}_n$  where
\begin{equation}\label{Def:Initialn}
\mathbb{I}_n:= \{ x\in \Z_{\ge 0}^d:\; x_d=0,\;\|x\|_{\infty}=n\}  
\end{equation}
is the set of points on the boundary face  that is of distance $n$ from the origin. 
For dimension $d=2$,  $\mathbb{I}_n=\{(n,0)\}$ is a single point. 

The following is our key assumption on  $X^D$. It implies, by the strong Markov property of $X^D$, that $X^D$ spends much time on the boundary face $\{x_d=0\}\cap \Z_{\ge 0}^d$   whenever the process hits this boundary at a location of distance at least $N_0$ from the origin.

\begin{assumption}\label{assump}
    Suppose there exist $N_0\in \mathbb{N}$ and $\theta_1 \in (0,\infty),\,\theta_2\in [\theta_1,\infty)$ such that the followings hold for any integer $n\geq  N_0$. 
    \begin{enumerate}
        \item[(i).] (Dominating cycles) There exists a subset $\mathcal{T}^{(1)}$ of  sequences in $\Z^d$ with finite lengths  and a constant $c_1\in (0,\infty)$ such that $\gamma_{\eta}$ is a cycle (i.e. $\sum_{i=1}^{|\eta|}\eta_i={\bf 0}$) for all $\eta\in \mathcal{T}^{(1)}$  and 
        \begin{align}\label{eq:assump_1st}
           \inf_{x\in \mathbb{I}_n}\P_{x}\left( \bigcup_{\eta \in \mathcal{T}^{(1)}} E_{\eta} \right ) \geq &\, 1 - \frac{c_1}{n^{\theta_1}}. 
        \end{align}     
        \item[(ii).] (Dominating excursions)  There exists a finite (possibly empty) subset $\mathcal{T}^{(2)}$ of sequences in $\Z^d$ with finite lengths and a constant $c_2\in (0,\infty)$ such that  the path $\gamma_{\eta}$ ends on the set $\{x_d=0\} \setminus \{{\bf 0}\}$
        for all $\eta\in \mathcal{T}^{(2)}$  and
        \begin{align}\label{eq:assump_3rd}
                \inf_{x\in \mathbb{I}_n}\P_{x}\left( \bigcup_{\eta \in \mathcal{T}^{(1)} \cup \mathcal{T}^{(2)}} E_{\eta} \right ) \geq &\, 1 - \frac{c_2}{n^{\theta_2}}.
        \end{align}
    \end{enumerate}
\end{assumption}

Note that $\gamma_{\eta}$ is an ``excursion" from the boundary face $\{x_d=0\}\cap \Z_{\ge 0}^d$  for each $\eta\in \mathcal{T}^{(1)}\cup\mathcal{T}^{(2)}$ in the sense that the path $\gamma_{\eta}$ starts and ends on the boundary face. Furthermore,  those paths corresponding to $\mathcal{T}^{(1)}$ are cycles and those corresponding to $\mathcal{T}^{(2)}$ are \textit{not} cycles. In particular, $\mathcal{T}^{(1)}$ and $\mathcal{T}^{(2)}$ are disjoint.



\begin{remark}\label{Rk:T1}
Assumption \eqref{eq:assump_1st} asserts that, starting at  $x\in \mathbb{I}_n$ for large $n$, the process $X^D$ will stay in one of the cycles in $\mathcal{T}^{(1)}$ for a long time. Precisely, let $R$ be the number of returns to $x\in \mathbb{I}_n$ before exiting a cycle in $\mathcal{T}^{(1)}$. By \eqref{eq:assump_1st} and the strong Markov property, 
$$\inf_{x\in \mathbb{I}_n}\P_{x}\left( R\geq k \right ) \geq   \left(1-\frac{c_1}{n^{\theta_1}}\right)^k$$ for all $k\geq 1$. In particular, the expected number of returns $\E_{x}[R]\geq \dfrac{n^{\theta_1}}{c_1}$ for all $x\in \mathbb{I}_n$. 
\end{remark}

\begin{remark}\rm\label{Rk:T2a}
We allow $\mathcal{T}^{(2)}$ to be an empty set. In this case,  our main results (Theorem \ref{thm} and Corollaries \ref{cor:positiverecurrence} and \ref{cor:mixing}) still hold with $\theta_2=\theta_1$; see Corollary \ref{Cor:Thm}. These results are weaker in general and are much simpler to prove, but already offer potential applications. For instance, as illustrated in Theorem \ref{prop:cycle}  in Section \ref{sec:class of reaction networks} for stochastic reaction networks, if one can find a cycle on a boundary of $\Z_{\ge 0}^d$ on which the process will follow with high probability, then one may apply Corollary \ref{Cor:Thm}. 
\end{remark}

\begin{remark}\rm\label{Rk:T2b}
If $\mathcal{T}^{(2)}$ is non-empty and $\theta_2>\theta_1$, 
then assumption \eqref{eq:assump_3rd} will lead to a stronger lower bound in Theorem \ref{thm} and Corollaries \ref{cor:positiverecurrence} and \ref{cor:mixing}. This 
stronger lower bound 
 appears to be sharp for the class of examples considered in Section \ref{sec:class of reaction networks}, as our upper bound \eqref{eq:mean first passage time} and our simulation studies  in Section \ref{S:simulations} suggest.

\end{remark}

We introduce the stopping times $\{\nu_i\}_{i=0}^{\infty}$ and $\{\mu_i\}_{i=1}^{\infty}$  that are respectively the  $i$ th visit to and the $i$ th exit from the boundary face $\{x_d=0\}\cap \Z_{\ge 0}^d$. Specifically, we let $\nu_0=0$ and for $i\geq 1$, 
\begin{align}\label{eq:StoppingDef}
    \mu_i := \inf \{t > \nu_{i-1}:X_d(t) > 0 \} \quad \text{and}\quad  \nu_i := \inf \{t > \mu_i: X_d(t) = 0\},
\end{align}
with the convention that $\inf\emptyset=+\infty$, where  $X(t)=(X_1(t),X_2(t),\ldots,X_d(t))$.
We shall assume that $\{\nu_i\}_{i\geq 1}$ are all finite almost surely, so that with full probability, 
$$0=\nu_0< \mu_1<\nu_1<\mu_2<\nu_2<\cdots<\nu_{i-1}<\mu_i<\nu_i< \cdots$$



\begin{assumption}[Return time to boundary]\label{assump2}
For any starting point on the boundary face $\{x_d=0\}\cap\Z_{\ge 0}^d$, 
the return times $\{\nu_i\}_{i\geq 1}$ are finite almost surely and stochastically bounded below by an exponential random variable with intensity $\kappa$, where $\kappa\in(0,\infty)$ is a constant. That is, for all  $x\in \{x_d=0\}\cap\Z_{\ge 0}^d$, we have $\P_x(\nu_i<\infty)=1$ for all $i\in\Z_{\ge 0}$ and $\P_x(\nu_1 \ge t)\ge  e^{- \kappa\,t}$  for all $t\in\R_{\ge 0}$. 
\end{assumption}

Our general result is Theorem \ref{thm} below, which works for any dimension $d\geq 2$. This theorem  implies a lower bound for the total variation distance between $X$ and its stationary distribution, and for  the mixing time, when $X$ admits a unique stationary distribution. The latter are precised in Corollary \ref{cor:positiverecurrence} and Corollary \ref{cor:mixing} respectively.
For $x=(x_1,x_2,\ldots,x_d)\in \Z^d$ we write $\|x\|_{d-1,\infty}:=\max_{1\leq i\leq d-1}|x_i|$.

\begin{thm}\label{thm}
Let $X=\big(X(t)\big)_{t\in \R_{\ge 0}}$ be a continuous-time Markov process on $\Z_{\ge 0}^d$ that satisfies  Assumption \ref{assump}  and Assumption \ref{assump2}. Let   
\begin{equation}\label{Def:theta}
     \theta:=(1+\theta_1)\wedge \theta_2 := \min\{1+\theta_1,\,\theta_2\}.
\end{equation} 
Then for any $\delta\in(0,1)$, there exist  constants $N_{\delta}\in \mathbb{N}$ and $C_{\delta}\in (0,\infty)$ such that for any $n> N_\delta$ and any initial state $x\in \mathbb{I}_n$,
    \begin{equation}\label{eq:problowerbdd}
   \inf_{t\in[0,\,C_{\delta} n^{\theta}] } \;\P_{x} \left( \|X(t)\|_{d-1,\infty}>\frac{n}{2} \right) \geq \delta,
    \end{equation} 
where $\|X(t)\|_{d-1,\infty}=\max_{1\leq i\leq d-1}|X_i(t)|$.
\end{thm}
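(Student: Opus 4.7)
Since $x \in \mathbb{I}_n$ satisfies $x_d = 0$ and $\|x\|_\infty = n$, some coordinate $x_{i^\ast}$ with $i^\ast \leq d-1$ equals $n$, so it is enough to show that with probability at least $\delta$ the displacement $|X_{i^\ast}(t) - x_{i^\ast}|$ stays below $n/2$ throughout $[0,C_\delta n^\theta]$. The intuition is that $X^D$ spends most of its time tracing cycles from $\mathcal{T}^{(1)}$, which leave its position unchanged, and only rarely follows an excursion from $\mathcal{T}^{(2)}$, each of which shifts the position by a bounded amount.

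First, at the $X^D$-level I would decompose the trajectory into ``rounds''. Set $S_0 := 0$ and $y_0 := x$; at each stopping time $S_k$ the chain $X^D$ sits at some boundary point $y_k$, and by Assumption \ref{assump}(ii) together with the strong Markov property, with probability at least $1 - c_2/\|y_k\|_\infty^{\theta_2}$ the next jumps of $X^D$ realize some $\eta^{(k)} \in \mathcal{T}^{(1)} \cup \mathcal{T}^{(2)}$. When they do, put $S_{k+1} := S_k + |\eta^{(k)}|$ and $y_{k+1} := y_k + \sum_i \eta^{(k)}_i$; otherwise I declare the round failed. Restricted to the event $\{\min_{j<k}\|y_j\|_\infty \geq n/2\}$, a union bound gives
\begin{equation*}
\P_x\bigl(\text{some round among the first } k \text{ fails}\bigr) \leq \frac{k\,c_2}{(n/2)^{\theta_2}}.
\end{equation*}

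Next I would control the drift. Let $J_k$ count the first $k$ rounds that realize some $\eta \in \mathcal{T}^{(2)}$. Since $\mathcal{T}^{(1)}$ and $\mathcal{T}^{(2)}$ are disjoint, Assumption \ref{assump}(i) gives $\P(\eta^{(j)} \in \mathcal{T}^{(2)} \mid y_j) \leq c_1/\|y_j\|_\infty^{\theta_1}$, and finiteness of $\mathcal{T}^{(2)}$ yields $M := \max_{\eta \in \mathcal{T}^{(2)}}\max_{\ell \leq |\eta|}\|\sum_{i=1}^\ell \eta_i\|_\infty < \infty$, so $\|y_k - x\|_\infty \leq M J_k$. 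Markov's inequality then gives $\P_x(J_k \geq n/(4M)) \leq 4 M c_1 k/n^{1+\theta_1}$, and on the complement $\|y_j\|_\infty \geq 3n/4$ for all $j \leq k$, closing the inductive hypothesis used above. With $\theta = \min(1+\theta_1, \theta_2)$, choosing $k := \lfloor \alpha n^\theta \rfloor$ for sufficiently small $\alpha = \alpha(\delta)$ makes both failure probabilities combined at most $1 - \delta$, so after $k$ successful rounds $\|y_k - x\|_\infty \leq n/4$ with probability at least $\delta$.

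Finally, I would translate this round count into real time via Assumption \ref{assump2}. Since the increments $\nu_i - \nu_{i-1}$ stochastically dominate independent $\mathrm{Exp}(\kappa)$ random variables, a Chernoff bound shows that the number of returns to the boundary by real time $t$ is at most $2\kappa t$ with overwhelming probability. Provided every $\eta \in \mathcal{T}^{(1)} \cup \mathcal{T}^{(2)}$ contains at least one interior excursion, the number of completed rounds in $[0, t]$ is bounded by the number of returns, so setting $C_\delta := \alpha/(2\kappa)$ guarantees at most $k$ rounds finish by any $t \leq C_\delta n^\theta$; the residual in-round displacement is bounded by a finite constant coming from the uniform control of intermediate positions in $\mathcal{T}^{(1)} \cup \mathcal{T}^{(2)}$, which is harmless against $n/4$ once $n \geq N_\delta$ is large enough. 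Combining the three steps then yields $\|X(t) - x\|_{d-1,\infty} > n/2$ with probability at least $\delta$, establishing \eqref{eq:problowerbdd}.

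The main obstacle is this last translation between $X^D$-rounds and real time: a cycle in $\mathcal{T}^{(1)}$ that never leaves the boundary face triggers no $\nu_i$ event and so cannot be counted via Assumption \ref{assump2} alone. I would either coarsen the notion of round so that each one necessarily contains an interior excursion, or apply Assumption \ref{assump2} directly at the $X^D$ level using the strong Markov property at each return. A related minor subtlety is uniformly bounding the intermediate in-round displacement for potentially long cycles in $\mathcal{T}^{(1)}$, since that family is not assumed finite.
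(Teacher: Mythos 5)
Your plan follows the same overall strategy as the paper's proof---decompose the trajectory into boundary excursions governed by Assumption \ref{assump}, show that $\mathcal{T}^{(2)}$-type excursions are too rare to move the process by order $n$ within $O(n^\theta)$ excursions, and convert excursion counts into real time via Assumption \ref{assump2}---but the execution differs in two places. First, where you control the drift by applying Markov's inequality to $\E[J_k]\le k\,c_1 2^{\theta_1}n^{-\theta_1}$, the paper (Lemma \ref{L:1Dim_alt}) proves a first-passage estimate for the reduced chain $Z(i):=(X_1(\nu_i),\dots,X_{d-1}(\nu_i))$: reaching level $n/2$ forces at least $\lfloor n/(2c^*)\rfloor$ increments in $J^{(2)}$, each of probability $O(n^{-\theta_1})$, and a binomial-coefficient/Stirling computation shows this is negligible when $\theta\le 1+\theta_1$ and the constant $b$ is small. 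Your first-moment bound is genuinely simpler and reaches the same conclusion; both versions ultimately require the leading constant to be small, which is harmless since it is absorbed into $C_\delta$. Second---and this is exactly where your flagged obstacle lives---the paper never counts ``rounds'': it indexes everything by the boundary-return clock, splitting $\{\|X(t)\|_{d-1,\infty}\le n/2\}$ into (a) $N(t)\ge\lfloor bn^\theta\rfloor$ returns by time $t$ (Lemma \ref{lm:unit_Poisson_bdd}, via Assumption \ref{assump2}), (b) $Z$ falling below $n/2+c_*$ within $\lfloor bn^\theta\rfloor$ returns, and (c) $Z$ staying above $n/2+c_*$ while some state between two consecutive returns dips below $n/2$, a rare excursion bounded by Assumption \ref{assump}(ii) (Lemma \ref{lm:rare_excursion_general}). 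That reformulation makes your rounds-versus-returns mismatch moot, though it does not remove the underlying implicit hypotheses: both your argument and the paper's need that on $E_\eta$ the segment of $X^D$ between consecutive boundary returns really is described by $\gamma_\eta$ (so the dominating cycles must leave the boundary face), and both need the quantity in \eqref{para:pathconstant} to be finite even though $\mathcal{T}^{(1)}$ is not assumed finite---the paper simply takes $c_*$ larger than that quantity without comment. So your proposal is sound modulo the issues you yourself identify, and the cleanest repair is precisely the paper's device of working with the stopping times $\nu_i$ and the reduced chain $Z$ rather than with realized excursions.
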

Note that $\delta$ can be arbitrarily close to $1$ in Theorem 
\ref{thm}. So the lower bound \eqref{eq:problowerbdd} is the best possible under our assumptions. 

\medskip

Next, we make a further assumption on $X$. This assumption is satisfied, for instance, when $X$ admits a unique stationary distribution
(see \cite{NorrisMC97} for the precise definition). 
\begin{assumption}\label{A:Positiveecurrent}
For all $n\in \mathbb{N}$ large enough and $x\in  \mathbb{I}_n$, there exists a unique stationary distribution $\pi_x$ on the communication class of the Markov process $X$ containing the state $x$. Furthermore, 
    \begin{align*}
         \lim_{n\to \infty} \sup_{x\in \mathbb{I}_n} \pi_x(\Lambda_n)=0,
    \end{align*}
where
\begin{equation}\label{Def:Lambdan}
\Lambda_n := \left\{x \in\Z_{\ge 0}^d:\, \|x\|_{d-1,\infty} > \frac{n}{2}\right\}.
\end{equation}
\end{assumption}
Clearly, if $X$ admits a unique stationary distribution $\pi$, then Assumption \ref{A:Positiveecurrent} is satisfied with $\pi_x=\pi$ for all $x\in \Z_{\ge 0}^d$. For ease of reading, the reader may first focus on the latter case.

\begin{corollary}\label{cor:positiverecurrence}   
Suppose, in addition to all assumptions in Theorem \ref{thm}, that Assumption \ref{A:Positiveecurrent} holds. Then for any $\delta\in (0,1)$, there exist constants $N'_\delta\in \mathbb{N}$ and $C'_{\delta}\in (0,\infty)$, such that for any $n> N'_\delta$ and any initial state $x\in \mathbb{I}_n$, 
\begin{equation}\label{eq:TV_bound}
\inf_{t\in[0,\,C'_{\delta}\, n^{\theta}] }\Vert P^{t}(x, \,\cdot\,) - \pi_x \Vert_{\rm TV} \geq  \delta,
\end{equation} 
where $P^{t}(x,\,\cdot \,)$ is the probability distribution of $X(t)$ under $\P_x$, and $\theta:= \min\{1+\theta_1,\,\theta_2\}$.
\end{corollary}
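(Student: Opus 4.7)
The plan is to deduce the total variation lower bound from Theorem \ref{thm} by choosing the natural distinguishing event $\Lambda_n = \{\|x\|_{d-1,\infty} > n/2\}$, and then using Assumption \ref{A:Positiveecurrent} to argue that the stationary mass on $\Lambda_n$ is negligible for large $n$. The key identity is that for any measurable set $A$,
\begin{equation*}
\|P^{t}(x,\,\cdot\,) - \pi_x\|_{\TV} \;\geq\; P^{t}(x, A) - \pi_x(A),
\end{equation*}
which follows directly from the definition of total variation distance. Taking $A = \Lambda_n$ recasts the problem into bounding $P^{t}(x, \Lambda_n)$ from below (which Theorem \ref{thm} provides) and $\pi_x(\Lambda_n)$ from above (which Assumption \ref{A:Positiveecurrent} provides).

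Concretely, given $\delta \in (0,1)$, I would first pick an auxiliary threshold $\delta' := (1+\delta)/2 \in (\delta, 1)$. Apply Theorem \ref{thm} with parameter $\delta'$ to obtain constants $N_{\delta'}$ and $C_{\delta'}$ such that for all $n > N_{\delta'}$, all $x \in \mathbb{I}_n$, and all $t \in [0, C_{\delta'} n^{\theta}]$,
\begin{equation*}
P^{t}(x, \Lambda_n) \;=\; \P_x\!\left(\|X(t)\|_{d-1,\infty} > \tfrac{n}{2}\right) \;\geq\; \delta'.
\end{equation*}
Next, Assumption \ref{A:Positiveecurrent} furnishes some integer $N'' \in \mathbb{N}$ such that $\sup_{x \in \mathbb{I}_n} \pi_x(\Lambda_n) \leq \delta' - \delta = (1-\delta)/2$ for all $n \geq N''$.

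Setting $N'_\delta := \max\{N_{\delta'}, N''\}$ and $C'_\delta := C_{\delta'}$, for any $n > N'_\delta$, any $x \in \mathbb{I}_n$, and any $t \in [0, C'_\delta n^{\theta}]$, we combine the two estimates to get
\begin{equation*}
\|P^{t}(x,\,\cdot\,) - \pi_x\|_{\TV} \;\geq\; P^{t}(x,\Lambda_n) - \pi_x(\Lambda_n) \;\geq\; \delta' - (\delta' - \delta) \;=\; \delta,
\end{equation*}
which yields \eqref{eq:TV_bound}. There is essentially no obstacle here: Theorem \ref{thm} does the heavy lifting by guaranteeing that the process remains far from the stationary regime (in the $\|\cdot\|_{d-1,\infty}$ sense) for a long time, while Assumption \ref{A:Positiveecurrent} ensures that the set $\Lambda_n$ has vanishing stationary mass uniformly in the initial condition as $n \to \infty$. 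The only subtle step is the need to introduce the slack $\delta'$ between the probability threshold from Theorem \ref{thm} and the target TV bound $\delta$, which is forced because Assumption \ref{A:Positiveecurrent} only provides a vanishing (not zero) upper bound on $\pi_x(\Lambda_n)$.
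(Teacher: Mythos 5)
Your proposal is correct and follows essentially the same route as the paper's proof: apply Theorem \ref{thm} at the boosted threshold $\delta' = (1+\delta)/2$, use the elementary bound $\Vert P^t(x,\cdot)-\pi_x\Vert_{\rm TV} \geq P^t(x,\Lambda_n)-\pi_x(\Lambda_n)$, and absorb the stationary mass of $\Lambda_n$ via Assumption \ref{A:Positiveecurrent}. The only cosmetic difference is that the paper introduces an auxiliary $\epsilon\in(0,\tfrac{1-\delta}{2})$ where you directly take the bound $\tfrac{1-\delta}{2}$; the argument is otherwise identical.
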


\begin{proof}[Proof of Corollary \ref{cor:positiverecurrence}] 

Fix $\delta \in (0, 1)$. 
By Theorem \ref{thm}, there exist constants $N'_\delta\in \mathbb{N}$ and $C'_{\delta}\in (0,\infty)$, such that for all $n> N'_\delta$ and for any  initial state $x\in \mathbb{I}_n$,
\begin{equation}
    \inf_{t\in[0,\,C'_{\delta}\, n^{\theta}] }  P^{t}(x,\Lambda_n)=  \inf_{t\in[0,\,C'_{\delta}\, n^{\theta}] } \P_{x} \left( \|X(t)\|_{d-1,\infty}> \frac{n}{2} \right)  \geq  \delta+\frac{1-\delta}{2}.
\end{equation} 
On the other hand, by Assumption \ref{A:Positiveecurrent}, 
for any  $\epsilon\in(0,\,\frac{1-\delta}{2})$ there exists $N_{1,\epsilon}$ such that
\[
  \sup_{n\geq N_{1,\epsilon}} \,\sup_{x\in \mathbb{I}_n} \pi_x(\Lambda_n)  < \epsilon. 
\]
Hence for all $n > \max\{N_{1,\epsilon}, N'_\delta\}$ and any initial state $x\in \mathbb{I}_n$,
\begin{equation}
  \inf_{t\in[0,\,C'_{\delta}\, n^{\theta}] }\Vert P^{t}(x,\,\cdot\,) - \pi_x \Vert_{\rm TV} \;\geq\; \inf_{t\in[0,\,C'_{\delta}\, n^{\theta}] }  P^{t}(x,\Lambda_n)   -   \pi_x(\Lambda_n)  \geq  \delta+\frac{1-\delta}{2}-\epsilon\geq \delta.
\end{equation}
\end{proof}

An immediate consequence of Corollary \ref{cor:positiverecurrence} is that the mixing time of $X$ is at least $O(n^{\theta})$ when the process starts at  $x\in \mathbb{I}_n$, as $n\to\infty$. For any $\delta\in (0,\infty)$ and any
$x\in\Z_{\ge 0}^d$, 
the \textbf{mixing time of  $X$ starting at $x$} with threshold $\delta$ is defined as
 \begin{align*}\label{eq:mixing time}
t^{\delta}_{\rm mix}(x) := \inf \{t \ge 0 : \Vert P^t(x,\,\cdot\,) - \pi_x \Vert_{\rm TV} \le \delta\}.
\end{align*}
\begin{corollary}\label{cor:mixing}
Under the same assumptions of Corollary \ref{cor:positiverecurrence},
for any $\delta\in (0,\,1)$ we have 
\begin{equation}
\inf_{x\in \mathbb{I}_n}t^{\delta}_{\rm mix}(x) \geq C'_{\delta}\, n^{\theta} \qquad \text{for all }n\geq N'_\delta,
\end{equation}
where the constants $C'_{\delta}$ and $N'_\delta\in(0,\infty)$  are the same as those in Corollary \ref{cor:positiverecurrence}.
\end{corollary}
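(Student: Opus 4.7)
The plan is to derive Corollary \ref{cor:mixing} directly from Corollary \ref{cor:positiverecurrence} by unpacking the definition of the mixing time. The key observation is that Corollary \ref{cor:positiverecurrence} already supplies a uniform lower bound on the total variation distance $\Vert P^{t}(x,\cdot) - \pi_x\Vert_{\TV}$ throughout the entire time interval $[0, C'_\delta n^\theta]$, whereas $t^{\delta}_{\mathrm{mix}}(x)$ is defined as the infimum of times at which that same total variation distance first falls to $\delta$ or below. Thus the bound on the mixing time should follow from the definition of the infimum, with essentially no further work required.

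First I would fix $\delta\in(0,1)$ and an integer $n\ge N'_\delta$, and pick an arbitrary initial state $x\in\mathbb{I}_n$. Then I would invoke Corollary \ref{cor:positiverecurrence} to conclude that $\Vert P^{t}(x,\cdot) - \pi_x\Vert_{\TV}\ge \delta$ for every $t\in[0, C'_\delta n^\theta]$. Consequently, no time $t$ strictly less than $C'_\delta n^\theta$ can belong to the set $\{s\ge 0:\Vert P^{s}(x,\cdot) - \pi_x\Vert_{\TV}\le \delta\}$, and by the definition of the infimum this gives $t^{\delta}_{\mathrm{mix}}(x)\ge C'_\delta n^\theta$. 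Since this lower bound is independent of the particular choice of $x\in\mathbb{I}_n$, taking the infimum over $x\in\mathbb{I}_n$ yields $\inf_{x\in\mathbb{I}_n}t^{\delta}_{\mathrm{mix}}(x)\ge C'_\delta n^\theta$, which is the claim.

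There is no substantive obstacle here. The only mild subtlety is the boundary case where the equality $\Vert P^{t}(x,\cdot) - \pi_x\Vert_{\TV}= \delta$ might occur inside $[0, C'_\delta n^\theta]$; this can be handled either by the standard convention for the infimum in the definition of mixing times, or by applying Corollary \ref{cor:positiverecurrence} with a slightly larger threshold $\delta'\in(\delta,1)$ and passing $\delta'\downarrow\delta$ (absorbing the resulting constants into $C'_\delta$ and $N'_\delta$).
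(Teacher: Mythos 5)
Your proposal is correct and is essentially the paper's own argument: the paper presents Corollary \ref{cor:mixing} as an immediate consequence of Corollary \ref{cor:positiverecurrence}, obtained exactly by noting that the total variation distance stays at least $\delta$ on $[0, C'_{\delta} n^{\theta}]$ and hence the infimum defining $t^{\delta}_{\rm mix}(x)$ cannot be attained before time $C'_{\delta} n^{\theta}$. Your remark on the boundary case $\Vert P^{t}(x,\cdot)-\pi_x\Vert_{\rm TV}=\delta$ is a reasonable precaution, and in fact the proof of Corollary \ref{cor:positiverecurrence} actually yields the strict bound $\delta+\tfrac{1-\delta}{2}-\epsilon>\delta$, so the issue does not arise.
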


\begin{remark}[Non $L^2$-exponentially ergodic]\rm \label{rmk:nonErgodic}
The lower bound in Corollary \ref{cor:mixing} typically implies that $X$ is non $L^2$-exponentially ergodic, if the tail of $\pi$ does not decay fast enough in the sense of \eqref{E:nonexpo} below.  
We now make this precise.

Suppose $X$ is an irreducible and positive-recurrent continuous-time Markov chain with countable state space $\S\subset \Z_{\ge 0}^d$ and stationary distribution $\pi$. We say that
$X$ is
\textbf{$L^2$-exponentially ergodic} if 
there is a constant $\eta \in(0,\infty)$ such that
\begin{equation}\label{ExpoEr23}
\left \|P_tf-\pi(f) \right \|_{L^2(\pi)} \leq e^{-\eta t}\, \left \|f-\pi(f) \right \|_{L^2(\pi)}
\end{equation}
for all compactly supported functions $f$ on $\S$ and $t\in\R_{\ge 0}$. 

Inequality \eqref{ExpoEr23} implies (for instance \cite[Lemma 2.4]{anderson2023new}) that for all $x\in \mathbb S$, 
\begin{align}\label{eq:ExpoB}
\Vert P^t(x,\,\cdot\,) - \pi \Vert_{\TV} \le \frac{2}{\pi(x)} e^{-\eta t} \quad \text{for } t \in\R_{\ge 0]}
\end{align}
and therefore also that
\begin{equation}\label{E:mixing_upper}
t^{\delta}_{\rm mix}(x) \leq \frac{1}{\eta}\ln\left(\frac{2}{\delta\,\pi(x)}\vee 2\right)
\end{equation}
for all $x$ and $\delta\in (0,1)$; see \cite[Theorem 2.5(ii)]{anderson2023new} for details.
Inequality \eqref{E:mixing_upper}  will contradict \eqref{E:mixing} if the tail of $\pi$ does not decay fast enough
in the following sense: if there exists  a sequence $\{x^{(n)}\}$ with $x^{(n)}\in \mathbb{I}_n$ such that 
$ \limsup_{n\to\infty}\pi(x^{(n)})\, e^{\eta\,n^{\theta}} \in(1,\infty]$.

Therefore, $X$ is {\it not} $L^2$-exponentially ergodicity (for any choice of $\eta$ in \eqref{ExpoEr23}) if $X$ satisfies the  assumptions of Corollary \ref{cor:positiverecurrence} with $\theta>0$ and if there exist $\theta'\in(0,\theta)$ and  a sequence $\{x^{(n)}\}$ with $x^{(n)}\in \mathbb{I}_n$  such that
\begin{equation}\label{E:nonexpo}
   \limsup_{n\to\infty}\pi(x^{(n)})\, e^{n^{\theta'}} \in (0,\infty].   
\end{equation}
For instance,  $X$ is {\it not} $L^2$-exponentially ergodicity if it 
satisfies the assumptions of Corollary \ref{cor:positiverecurrence} with
$\theta>1$ and if
$\pi$ is a Poisson distribution described in \eqref{E:pi_complexBalanced}. 
This is because then Condition \eqref{E:nonexpo} holds with $\theta'\in (1,\theta)$ and $x^{(n)}=(n,0,0,\ldots,0)$.
\end{remark}

As mentioned in Remark \ref{Rk:T2a}, a possibly weaker result  (where $\theta_2=\theta_1$) follows immediately if we omit part (ii) of Assumption \eqref{assump}.

\begin{corollary}\label{Cor:Thm}
Let $X=\big(X(t)\big)_{t\in \R_{\ge 0}}$ be a continuous-time Markov process on $\Z_{\ge 0}^d$ that satisfies 
Assumption \eqref{assump}(i) and Assumption \ref{assump2}. Then for any $\delta\in(0,1)$, there exist  constants $N_{\delta}\in \mathbb{N}$ and $C_{\delta}\in (0,\infty)$ such that 
for any $n> N_\delta$ and any initial state $x\in \mathbb{I}_n$,
\begin{equation}\label{eq:problowerbdd2}
   \inf_{t\in[0,\,C_{\delta}\, n^{\theta_1}] } \P_{x} \left( \|X(t)\|_{d-1,\infty}> \frac{n}{2} \right) \geq \delta.
    \end{equation} 
Suppose, furthermore, that Assumption \ref{A:Positiveecurrent} holds. Then  there exist constants $N'_\delta\in \mathbb{N}$ and $C'_{\delta}\in (0,\infty)$, such that for any $n> N'_\delta$ and any initial state $x\in \mathbb{I}_n$, 
\begin{equation}\label{eq:TV_bound_2}
\inf_{t\in[0,\,C'_{\delta}\, n^{\theta_1}] }\Vert P^{t}(x, \,\cdot\,) - \pi_x \Vert_{\rm TV} \geq  \delta \qquad \text{and}\qquad t^{\delta}_{\rm mix}(x) \geq C'_{\delta}\, n^{\theta_1}.
\end{equation} 
\end{corollary}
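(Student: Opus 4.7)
The plan is to deduce Corollary \ref{Cor:Thm} directly from Theorem \ref{thm} together with Corollaries \ref{cor:positiverecurrence} and \ref{cor:mixing}, by specializing Assumption \ref{assump}(ii) to the trivial case $\mathcal{T}^{(2)} = \emptyset$, as hinted in Remark \ref{Rk:T2a}.

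First, I would observe that if one takes $\mathcal{T}^{(2)} = \emptyset$, then $\mathcal{T}^{(1)} \cup \mathcal{T}^{(2)} = \mathcal{T}^{(1)}$, so the condition \eqref{eq:assump_3rd} in Assumption \ref{assump}(ii) becomes word-for-word identical to the condition \eqref{eq:assump_1st} in Assumption \ref{assump}(i). The side requirement imposed on $\mathcal{T}^{(2)}$---that every $\gamma_\eta$ with $\eta \in \mathcal{T}^{(2)}$ terminates on $\{x_d=0\}\setminus\{\mathbf{0}\}$---holds vacuously. Consequently, setting $\theta_2 := \theta_1$ (admissible since $\theta_1 \in [\theta_1,\infty)$) and $c_2 := c_1$, Assumption \ref{assump}(ii) is automatically satisfied as soon as Assumption \ref{assump}(i) is.

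Second, under this choice the exponent defined in \eqref{Def:theta} simplifies to
\[
\theta \;=\; (1+\theta_1)\wedge\theta_2 \;=\; (1+\theta_1)\wedge\theta_1 \;=\; \theta_1.
\]
Applying Theorem \ref{thm} then yields \eqref{eq:problowerbdd2} with suitable constants $N_\delta$ and $C_\delta$; and under the additional Assumption \ref{A:Positiveecurrent}, Corollary \ref{cor:positiverecurrence} and Corollary \ref{cor:mixing} respectively yield the total variation lower bound and the mixing time lower bound asserted in \eqref{eq:TV_bound_2}.

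The argument is essentially bookkeeping, so I do not anticipate a genuine technical obstacle. The only point worth checking---and arguably the hardest---is to confirm that the later proofs of Theorem \ref{thm} and of Corollaries \ref{cor:positiverecurrence}, \ref{cor:mixing} do not tacitly require $\mathcal{T}^{(2)}$ to be nonempty or the strict inequality $\theta_2 > \theta_1$ to be satisfied. Since the statements themselves are clearly monotone in $\mathcal{T}^{(2)}$ (enlarging $\mathcal{T}^{(2)}$ only strengthens \eqref{eq:assump_3rd}) and since the conclusion depends on $(\theta_1,\theta_2)$ only through $\theta = (1+\theta_1)\wedge\theta_2$, a brief inspection of the proofs should suffice to rule out any such hidden dependency, after which the corollary follows with no further work.
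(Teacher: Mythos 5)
Your proposal is correct and follows exactly the route the paper intends: as stated in Remark \ref{Rk:T2a} and the sentence preceding the corollary, one takes $\mathcal{T}^{(2)}=\emptyset$ and $\theta_2=\theta_1$ (so that Assumption \ref{assump}(ii) is subsumed by Assumption \ref{assump}(i) and $\theta=(1+\theta_1)\wedge\theta_1=\theta_1$), then invokes Theorem \ref{thm} and Corollaries \ref{cor:positiverecurrence} and \ref{cor:mixing}. Your caveat about checking that the downstream proofs tolerate an empty $\mathcal{T}^{(2)}$ (e.g.\ the degenerate constant $c^*$ in Lemma \ref{L:1Dim_alt}) is well placed but harmless, since in that case the second scenario of that lemma is vacuous and Assumption \ref{A:theta} holds with $C_0=c_1 2^{\theta_1}$.
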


\begin{remark}[Boundary-induced slow-mixing behavior]\rm\label{rmk:slowmixing}
For a continuous-time Markov chain $X$ that satisfies all assumptions in Corollary \ref{Cor:Thm},  the trajectory is said to exhibit a ``boundary-induced slow-mixing behavior''. The reason is that 
the lower bounds of the mixing time described in Corollaries \ref{cor:mixing} and \ref{Cor:Thm} are roughly due to $X^D$ spending much time near the boundary face (Remark \ref{Rk:T1}). 
By the strong Markov property, whenever the process hits the boundary face $\{x_d=0\}\cap \Z_{\ge 0}^d$  at a location of distance $N_0$ or more from the origin, $X^D$ will spend much time near the boundary face, getting stuck in cycles in $\mathcal{T}^{(1)}$.
The stronger bound in Corollary \ref{cor:mixing} is due to further control on the rare excursions, described by $\mathcal{T}^{(2)}$,  when the trajectory  exits the cycles in $\mathcal{T}^{(1)}$.  This stronger bound appears to be sharp for the class of examples considered in Section \ref{sec:class of reaction networks}, as our upper bound \eqref{eq:mean first passage time} and our simulation studies suggest.
\end{remark}

\medskip


To explain how the assumption \eqref{eq:assump_3rd} with $\theta_2>\theta_1$ can lead to a stronger lower bound, it suffices to explain the rationale behind the exponent $\theta:= \min\{1+\theta_1,\,\theta_2\}$  in Theorem \ref{thm}. 
In Section \ref{S:simulations} we demonstrate some concrete examples. 

\begin{remark}[Why $\theta:= \min\{1+\theta_1,\,\theta_2\}$?] \rm\label{ex:motivating}
From the technical standpoint, this requirement of $\theta$ is used crucially in the proof of Proposition \ref{prop:cycle gives tauZ}, where  we will use this requirement  to obtain
the inequality \eqref{eq:problowerbdd_2} (equivalently \eqref{eq:thm1_also}) by establishing upper bounds for both
\begin{align}\label{eq:thm1_also2}
  \sup_{t\in[0,\,Cn^{1+\theta_1}]}  \P_{x}\left(\|X(t)\|_{d-1,\infty}\leq \frac{n}{2}  \right) \quad\text{and}\quad  \sup_{t\in[0,\,Cn^{\theta_2}]}  \P_{x}\left(\|X(t)\|_{d-1,\infty}\leq \frac{n}{2}  \right).
\end{align}

An intuitive explanation to this is as follows.
First,  Assumption \eqref{eq:assump_1st} implies that the expected number of returns to a starting point  $x\in \mathbb{I}_n$ before exiting a cycle in $\mathcal{T}^{(1)}$ is at least $O(n^{\theta_1})$; see Remark \ref{Rk:T1}. This implies, by
assumption \eqref{eq:assump_3rd}, that the expected number of return to the boundary face before exiting 
the set  $\left\{ \|x\|_{d-1,\infty}>\frac{n}{2} \right\}$
is \emph{at least}  $O(n^{\theta_1+1})$. This is because
when the trajectory of $X$ leaves $\mathcal{T}^{(1)}$, it will follow an excursion in $\mathcal{T}^{(2)}$ with high probability and return to the  boundary face  at a location ``close" to its starting point $x\in \mathbb{I}_n$. 
``close" here means within a fixed distance (independent of $n$) to the starting point  $x\in \mathbb{I}_n$, which in turn is due to the assumption that $\mathcal{T}^{(2)}$ is a finite subset  of sequences in $\Z^d$ with finite lengths.  Hence, by Assumption \ref{assump2},  the expected time to exit
the set $\left\{ \|x\|_{d-1,\infty}>\frac{n}{2} \right\}$ is \emph{at least}  $O(n^{\theta_1+1})$. This will lead to the first inequality in \eqref{eq:thm1_also2}.

Second, by a similar argument in  Remark \ref{Rk:T1}, 
Assumption \eqref{eq:assump_3rd} implies that
the expected number of returns to the subset $\left\{ \|x\|_{d-1,\infty}>\frac{n}{2} \right\} \cap \{x_d=0\}$ of the  boundary face before 
the trajectory of $X$ leaves $\mathcal{T}^{(1)}\cup \mathcal{T}^{(2)}$ is 
\emph{at least}  $O(n^{\theta_2})$. Hence the expected time to exit
the set $\left\{ \|x\|_{d-1,\infty}>\frac{n}{2} \right\}$ is \emph{at least}  
$O(n^{\theta_2})$ by Assumption \ref{assump2}. This will lead to the second inequality in \eqref{eq:thm1_also2}.
\end{remark}


\section{Applications to stochastic reaction networks}\label{sec: stochastic crn}

In this section we demonstrate the applicability of our results to a wide class of continuous-time Markov processes used heavily in biochemistry, ecology, and epidemiology. The models are referred to as \textbf{stochastic reaction networks}, for which continuous-time Markov processes are used to model the copy numbers of species as described below. We shall identify a class of  stochastic reaction networks with two species 
that 
have mixing times at least $O(n^{\theta})$ for some $\theta>1$ 
when the initial state is $(n,0)$, as described in Corollary \ref{cor:mixing}. 


Roughly, a reaction network is
a graphical configuration of interactions between chemical species such as oxygen and carbon dioxide in the Earth's atmosphere, or glucose and various amino acids in a biological cell. An example of reaction network with two species $\{X_1, X_2\}$ is
\begin{align}\label{ex:crn}
    X_1+X_2 \to 2X_1 \to 2X_2, \quad X_2\to \emptyset. 
\end{align}
The reactions $X_1+X_2 \to 2X_1  \to 2X_2$ describe that species $X_1$ and $X_2$ are combined to produce two copies of $X_1$ that can further be combined to  produce two copies of $X_2$, and the reaction $X_2\to \emptyset$ means that $X_2$ is washed out from the system. A general definition of reaction network is as follows.




\begin{definition}\label{def:21}
\emph{A  reaction network} is a triple of finite sets $(\mathcal S,\mathcal C,\mathcal R)$ where:
\begin{enumerate}
\item \emph{The species set} $\Sp=\{X_1,X_2,\cdots,X_d\}$ contains the species of the reaction network.
\item \emph{The reaction set} $\Re=\{R_1,R_2,\cdots,R_r\}$ is a collection of ordered pairs $(y,y') \in \Re$, with $y\ne y'$, where 
\begin{align}\label{complex}
y=\sum_{i=1}^d y_iX_i \hspace{0.4cm} \textrm{and} \hspace{0.4cm}
y'=\sum_{i=1}^d y'_iX_i
\end{align}
are non-negative linear combinations of the species, called complexes
and where the values $y_i,y'_i \in \mathbb{Z}_{\ge 0}$ are called the \emph{stoichiometric coefficients}. A pair $(y,y')\in \Re$ is called \emph{reaction} and is represented as $y\rightarrow y'$. We often represent a complex $y$ using a vector $(y_1,y_2,\dots,y_d)\in \mathbb Z^d_{\ge 0}$ as well as the summation shown in \eqref{complex}. Then the \emph{reaction vector} $y'-y$ for a reaction $y\to y'$ can mean `net change' via the reaction.
\item \emph{The complex set} $\C$  a non-empty set of non-negative linear combinations of the species in (\ref{complex}). Specifically, 
$\C = \{y\ : \ y\rightarrow y' \in \Re\} \cup \{y' \ : \ y\rightarrow y' \in \Re\}$.
\hfill $\triangle$
\end{enumerate}
\end{definition}

For the reaction network in \eqref{ex:crn}, we have $\S=\{X_1,X_2\}, \C=\{X_1+X_2, 2X_1, X_2, \emptyset\}$, and $\Re=\{ X_1+X_2 \to 2X_1, 2X_1 \to 2X_2, X_2\to \emptyset\}$. Note that $y=\emptyset$ is a complex such that $y_i=0$ for each 
$i$. For the reaction $X_1+X_2\to 2X_1$ in \eqref{ex:crn}, we present the complexes $y=X_1+X_2$ and $y'=2X_1$ by $(1,1)$ and $(2,0)$. Then $y'-y=(1,-1)$ means that we eventually gain one $X_1$ and lose one $X_2$.

The concentration (or the copy numbers) of species change over time according to their interactions specified by the reaction network. One of the main goals in reaction network theory is to study the relations between the topological structure of the reaction network and the associated dynamics of the species.

\medskip
\noindent
{\bf Deterministic model for chemical concentrations. }
The time evolution of the concentration of species can be modeled with ordinary differential equations, assuming the system is spatially well-mixed and the abundance of species in the system is sufficiently large.

Let $x(t)=(x_1(t),x_2(t),\dots,x_d(t)) \in \mathbb R^d_{\ge 0}$ where $x_i$ represents the concentration of the $i$ th species. Such an equation is
\begin{align}\label{eq:det_crn}
    \frac{d}{dt}x(t)=\sum_{y\to y' \in \mathcal R} \lambda_{y\to y'}(x(t))(y'-y).
\end{align}
Here $\lambda_{y\to y'}$ gives the intensity of a reaction $y\to y'$, which is a non-negative function on $\mathbb R^d_{\ge 0}$. A typically used intensity is 
\begin{align}\label{eq:det_mass}
    \lambda_{y\to y'}(x)=\kappa_{y\to y'}\prod_{i=1}^d x_i^{y_i} \text{ for some constant $\kappa_{y\to y'}>0$},
\end{align}
and the choice of this setting is called \emph{mass-action kinetics}. $\kappa_{y\to y'}$ is called a \textit{rate constant}. We typically incorporate the rate constants into the reaction network by placing them next to the reaction arrow as in $y\xrightarrow{\kappa_{y\to y'}} y'$. 


\medskip
\noindent
{\bf Stochastic model for copy numbers. }
When not all species in the system are abundant, it may be necessary to
 model the time evolution of the copy numbers of each species rather than their concentrations. 
Let $(\S,\C,\Re)$ be a reaction network with $|\S|=d$.
A reaction network equipped with a kinetics can be modeled as a continuous-time Markov process $X:=(X(t))_{t\geq 0}$, where $X(t)=(X_i(t))_{i=1}^d\in \Z_{\ge 0}^{d}$  and $X_i(t)$ is the copy number of the $i$ th species at time $t$. 

Let $q(x,x')$ denote the transition rate of $X$ from $x$ to $x'$. Under mild conditions such as conditions shown in \cite[Condition 1]{anderson2018non},
 the infinitesimal generator $\mathcal A$ of $X$ can be written as
 \begin{align}\label{InfGen}
\mathcal{A}f(x)= \sum_{x' \in  \Z^d_{\ge 0}} q(x,x')(f(x')-f(x))=\sum_{y \rightarrow y' \in \Re} \lambda_{y\rightarrow y'}(x)(f(x+y'-y)-f(x)),
\end{align}
for any function $f:\Z^d_{\ge 0} \to \R$ for which the right hand side of \eqref{InfGen} is defined. Alternatively, this means that 
\begin{align*}
    P(X(t+\Delta t)=z| X(t)=x)=\sum_{y'\to y \in \Re, z=x+y'-y}\lambda_{y\to y'}(x)\Delta t+o(\Delta t),   \quad \text{as}\quad  \Delta t \to 0.
\end{align*}
In this paper, we focus on the usual choice of intensity given by (stochastic) mass-action kinetics
\begin{align}\label{mass}
\lambda_{y\rightarrow y'}(x)= \kappa_{y\rightarrow y'} \prod_{i=1}^d \frac{x_i !}{(x_i-y_{i})!}\mathbf{1}_{\{x_i \ge y_{i}\}},
\end{align} 
where the positive constant $\kappa_{y\rightarrow y'}$ is the  reaction rate constant.

Note that
\begin{equation}
    \lambda_{y\rightarrow y'}(x)=0 \quad\text{if}\quad  x\notin y+\Z_{\ge 0}^d.
\end{equation}
That is, $\lambda_{y\rightarrow y'}$ vanishes on the set $\Z_{\ge 0}^d \setminus (y+\Z_{\ge 0}^d)$. In other words, if $y\neq 0$, then $\lambda_{y\to y'}(x)$ is zero when $x\in \partial \Z^d_{\ge 0}$.
For example, for the complex $y=X_1+X_2=(1,1)$ in \eqref{ex:crn}, then $\lambda_{y\to y'}(x)=0$ if $x\in \{(x_1,x_2): x_1\le 1\}\cup \{(x_1,x_2): x_1\le 1\}=\Z^2_{\ge 0}\setminus (y+\Z^2_{\ge 0})$. 
This `boundary effect' is one of the key ingredients for the conditions in Assumption \ref{assump} to hold.

Finally, as we highlight above, we study a lower bound of mixing times for a class of reaction networks. Hence it is also important to know which class of reaction networks admit a unique stationary distribution. There is a well-known theorem that shows that the associated mass-action Markov chains for any so-called `complex balanced reaction network' admits a unique stationary distribution \cite{anderson2010product}. A reaction network $(\S,\C,\Re)$ with given rate constants is \textbf{complex balanced} if there exists $c\in \R^d_{\ge 0}$ such that inflows and outflows at each complex $y$ are balanced at $c$. In other words, it holds that for each $y\in \C$
\begin{align}\label{E:complexBalance}
    \sum_{y\to y'\in \Re}\lambda_{y\to y'}(c)=\sum_{y''\to y\in \Re}\lambda_{y''\to y}(c),
\end{align}
where the intensities $\lambda_{y\to y'}$'s are given by \eqref{eq:det_mass}.
The constant vector $c\in \R^d_{\ge 0}$ is called a complex balanced steady state. Theorem \ref{thm:def0} establishes that, when stochastically modeled, complex balanced reaction networks possess stationary distributions that can be expressed in a product form of Poissons.

\begin{thm}[Anderson, Craciun and Kurtz 2010 \cite{anderson2010product}] \label{thm:def0} Under a choice of parameters $\kappa_i$'s, assume that a reaction network $(\S,\C,\Re)$ is complex balanced with a steady state $c =(c_1,\dots,c_d)\in \R^d_{>0}$. Then the associated Markov chain modeled under mass-action kinetics admits a unique (up to a communication class) stationary distribution $\pi$ such that \eqref{E:pi_complexBalanced} holds.
\begin{align}\label{E:pi_complexBalanced}
    \pi(x) \propto \prod_{i=1}^d\frac{c_i^{x_i}}{x_i!} \qquad \text{for  $x=(x_1,\dots,x_d)\in \Z_{\ge 0}^d$}.
\end{align}
\end{thm}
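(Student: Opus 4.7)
The plan is to verify directly that the product-form Poisson measure
$\pi(x) \propto \prod_{i=1}^d c_i^{x_i}/x_i!$ is stationary by plugging it into the master equation and reducing the resulting identity, term by term, to the complex balance condition \eqref{E:complexBalance}. Uniqueness (up to a communication class) then follows from standard Markov chain theory once we know $\pi$ is an invariant measure and is summable on each communication class.

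First I would write stationarity in its pointwise form. By \eqref{InfGen}, $\pi$ is stationary if and only if, for every state $z\in\Z_{\ge 0}^d$ in the state space,
\begin{equation*}
\sum_{y\to y'\in \Re} \pi(z-y'+y)\,\lambda_{y\to y'}(z-y'+y) \;=\; \pi(z)\sum_{y\to y'\in \Re}\lambda_{y\to y'}(z),
\end{equation*}
with the convention that the intensity vanishes whenever the argument leaves $\Z_{\ge 0}^d$. Next I would compute the key algebraic identity under mass-action kinetics \eqref{mass} and the Poisson ansatz: for any reaction $y\to y'$,
\begin{equation*}
\pi(z)\,\lambda_{y\to y'}(z) \;=\; \kappa_{y\to y'}\,\Bigl(\prod_{i=1}^d c_i^{z_i}\Bigr)\,\prod_{i=1}^d \frac{1}{(z_i-y_i)!}\,\mathbf{1}_{\{z\ge y\}},
\end{equation*}
and, using $(z-y'+y)_i - y_i = z_i - y'_i$, a parallel computation yields
\begin{equation*}
\pi(z-y'+y)\,\lambda_{y\to y'}(z-y'+y) \;=\; \kappa_{y\to y'}\,c^{\,y-y'}\,\Bigl(\prod_{i=1}^d c_i^{z_i}\Bigr)\,\prod_{i=1}^d \frac{1}{(z_i-y'_i)!}\,\mathbf{1}_{\{z\ge y'\}}.
\end{equation*}
Here $c^{y-y'}:=\prod_i c_i^{y_i-y'_i}$.

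The crucial step is the reorganization by complex. Cancelling $c^z=\prod_i c_i^{z_i}$ from both sides, I would regroup the RHS by the source complex $\eta=y$ and the LHS by the product complex $\eta=y'$, yielding the requirement
\begin{equation*}
\sum_{\eta\in \clc}\Bigl(\prod_{i=1}^d\frac{\mathbf{1}_{\{z\ge\eta\}}}{(z_i-\eta_i)!}\Bigr)\Bigl[\sum_{y: y\to\eta}\kappa_{y\to\eta}\,c^{\,y-\eta} \;-\; \sum_{y': \eta\to y'}\kappa_{\eta\to y'}\Bigr] \;=\; 0.
\end{equation*}
Dividing \eqref{E:complexBalance} by $c^\eta$ at the complex balanced steady state $c$ shows that each bracketed term is identically zero, so the identity holds for every $z$. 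This proves that $\pi$ is an invariant measure.

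To conclude, I would verify that $\pi$ has finite mass on each communication class (it is dominated by the product of Poisson weights, which is summable on $\Z_{\ge 0}^d$ and hence on any subset), and then normalize it on each class. Uniqueness then follows from the standard fact that an irreducible continuous-time Markov chain admits at most one stationary probability measure per communication class. The main obstacle I anticipate is the bookkeeping in the reorganization step: one must carefully track the indicator functions $\mathbf{1}_{\{z\ge y\}}$ and $\mathbf{1}_{\{z\ge y'\}}$ and justify that boundary effects (states where some $x_i<y_i$) do not break the cancellation. This is handled cleanly because, by the structure of mass-action kinetics, both sides of the local balance equation automatically vanish on the missing terms, allowing the grouping by complex to go through uniformly in $z$.
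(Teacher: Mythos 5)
The paper does not prove this statement: Theorem \ref{thm:def0} is quoted verbatim from \cite{anderson2010product}, so there is no in-paper argument to compare yours against. That said, your verification is exactly the classical Anderson--Craciun--Kurtz computation: the identity $\pi(z)\lambda_{y\to y'}(z)\propto\kappa_{y\to y'}\,c^{z}\prod_i\bigl((z_i-y_i)!\bigr)^{-1}\mathbf{1}_{\{z\ge y\}}$, the observation that the shifted indicator $\mathbf{1}_{\{z-y'+y\ge y\}}$ equals $\mathbf{1}_{\{z\ge y'\}}$ (which is also what disposes of the boundary terms), the regrouping of the in-flux by product complex and the out-flux by source complex, and the reduction of each bracket to \eqref{E:complexBalance} divided by $c^{\eta}$ (legitimate since $c\in\R^d_{>0}$). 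This part is correct and is precisely the proof in the cited reference.

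One step deserves more care than your write-up gives it. For a continuous-time chain with unbounded polynomial rates, exhibiting a summable nonnegative solution of the pointwise balance equation $\pi Q=0$ does not by itself show that the process \emph{admits} $\pi$ as a stationary distribution: if the minimal chain is explosive, its transition function is strictly substochastic and no stationary probability distribution exists, even though the algebraic identity holds at every state. So your closing sentence (``uniqueness follows \ldots once we know $\pi$ is an invariant measure and is summable'') silently assumes regularity of the chain. For complex balanced networks non-explosivity is true but is a separate, later theorem, established in \cite{anderson2018non} (which this paper also cites); a complete proof should invoke it explicitly. Modulo that point the argument is complete: on each closed communication class the normalized restriction of $\pi$ is then the unique stationary distribution by irreducibility together with positive recurrence, the latter being a consequence of the existence of a summable invariant measure for a regular irreducible chain.
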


\subsection{A class of stochastic reaction networks with slow mixing}\label{sec:class of reaction networks}

In this section, we demonstrate the applicability of our general theorem, Theorem \ref{thm}, to a class of cyclic reaction networks with two species and an arbitrary number of complexes. We first describe these reaction networks
in \eqref{eq:cyclicmodel} below. We then show that the associated continuous-time Markov chain $X$ satisfies Assumptions  \ref{assump} and \ref{assump2}, provided that Assumption \ref{assump:cyclic} about the stoichiometric coefficients holds.


We consider an arbitrary    reaction network with two species ($A$ and $B$) of the following form:\vspace{-0.8cm}
\begin{equation}\label{eq:cyclicmodel}
   \begin{tikzpicture}[baseline={(current bounding box.center)}, state/.style={circle,inner sep=2pt}]
   \node[state] (1)  at (0,0) {$\alpha_1 A+\beta_1 B$};
  \node[state] (2)  at (3,0) {$\alpha_2 A+\beta_2 B$};
  \node[state] (dots)  at (5.5,0) {$ \ \cdots \ $};
  \node[state] (3)  at (8.5,0) {$\alpha_{L-1} A+\beta_{L-1} B$};
\node[state] (0) at (4.5,-1.5) {$\emptyset$};
    \path[->]
     (1) edge node[above] {$\kappa_1$} (2)
      (2) edge node[above] {$\kappa_2$} (dots)
       (dots) edge node[above] {$\kappa_{L-2}$} (3)
       (3) edge node[above] {$\kappa_{L-1}$} (0)
       (0) edge node[above] {$\kappa_0$} (1);     
   \end{tikzpicture}
\end{equation}
where $L\geq 2$ is an integer, $\{\alpha_i,\beta_i\}_{i=1}^{L-1}$ are non-negative integers and $\{\kappa_i\}_{i=0}^{L-1}$ are positive constants. That is, we let $\{\alpha_i,\beta_i\}_{i=1}^{L-1}$ be the stoichiometric coefficients and $\{\kappa_i\}_{i=0}^{L-1}$ be the rate constants. 
The number of complexes (including the empty complex $\emptyset$) is $L$ if the pairs $\{(\alpha_i,\beta_i)\}_{i=0}^{L-1}$ are distinct, where  $\alpha_0 =\beta_0 = 0$.

We denote the complexes  by $z_i = \alpha_i A + \beta_i B$ for $i=0,1,...,L-1$. Then under mass-action kinetics \eqref{mass}, the reaction rate function for $z_i \rightarrow z_{i+1}$ where $z_L := z(0) = \emptyset$. ,
\[
    \lambda_i (x) = \kappa_i \frac{x_A!}{(x_A-\alpha_i)!}\frac{x_B!}{(x_B-\beta_i)!}, \text{   for } i = 0,1,...,L-1.
\]


\begin{thm}\label{prop:cycle}
Consider a reaction network  given in \eqref{eq:cyclicmodel}, where $L\geq 2$ is an integer, $\{\alpha_i\}_{i=1}^{L-1}$ and $\{\beta_i\}_{i=1}^{L-1}$ are non-negative increasing integers, and $\{\kappa_i\}_{i=0}^{L-1}$ is a set of arbitrary positive constants. Let $X$ be the associated continuous-time Markov process under mass-action kinetics. Then Assumptions \ref{assump}(i) holds with $\theta_1 = \min_{i} \{\alpha_{i-1} - \alpha_{i-2}: 2\leq i \leq L\}$, and Assumption \ref{assump2} is satisfied with $\kappa=\kappa_0$. In particular, Theorem \ref{thm} holds with $$\theta= \theta_1=\min_{i} \{\alpha_{i-1} - \alpha_{i-2}: 2\leq i \leq L\}.$$ 
\end{thm}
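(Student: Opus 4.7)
The plan is to verify Assumption \ref{assump}(i) by exhibiting a single explicit dominating cycle in $\mathcal{T}^{(1)}$ (taking $\mathcal{T}^{(2)} = \emptyset$, which by Remark \ref{Rk:T2a} forces $\theta_2 = \theta_1$ and hence $\theta = \theta_1$), and Assumption \ref{assump2} by a direct analysis of the boundary exit rate. The ``non-negative increasing'' hypothesis will be read as strictly increasing, so that $\alpha_0 = \beta_0 = 0 < \alpha_1 < \cdots < \alpha_{L-1}$ (and similarly for $\beta$); this is what makes $\theta_1 \in (0,\infty)$. Set $\mathcal{T}^{(1)} := \{\eta^*\}$ with $\eta^* = (\eta_1^*, \ldots, \eta_L^*)$ and $\eta_k^* := z_k - z_{k-1}$ (where $z_L := z_0 := \emptyset$), so $\sum_{k=1}^L \eta_k^* = \mathbf{0}$ and $\gamma_{\eta^*}$ is a cycle. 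Under $\P_{(n,0)}$, the event $E_{\eta^*}$ is precisely that $X^D$ fires $\emptyset \to z_1 \to z_2 \to \cdots \to z_{L-1} \to \emptyset$ in order, visiting $\xi_i := (n+\alpha_i, \beta_i)$ for $i = 0, 1, \ldots, L-1$ before returning to $\xi_0 = (n,0)$.

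To estimate $\P_{(n,0)}(E_{\eta^*})$, let $p_i$ denote the conditional probability that the correct reaction $z_i \to z_{i+1}$ fires at $\xi_i$. By \eqref{eq:transition prob of XD} and \eqref{mass}, $p_i = \lambda_i(\xi_i) / \sum_{j=0}^{L-1} \lambda_j(\xi_i)$. At $\xi_i$ the rate $\lambda_j(\xi_i)$ vanishes whenever $\beta_j > \beta_i$, and by strict monotonicity of $\{\beta_j\}$ this rules out every $j > i$. For $j \leq i$, the falling-factorial form of mass-action kinetics gives $\lambda_j(\xi_i) = \Theta(n^{\alpha_j})$ as $n \to \infty$, so the correct rate $\lambda_i(\xi_i) = \Theta(n^{\alpha_i})$ dominates all competitors, the next-largest being $z_{i-1} \to z_i$ with rate $\Theta(n^{\alpha_{i-1}})$. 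At $\xi_0 = (n,0)$ the only reaction that can fire at all is $\emptyset \to z_1$ (since $\beta_j \geq 1$ for $j \geq 1$), so $p_0 = 1$. Hence there exist constants $C_i > 0$ and $N_0 \in \mathbb{N}$ with $p_i \geq 1 - C_i\, n^{-(\alpha_i - \alpha_{i-1})}$ for all $n \geq N_0$ and all $1 \leq i \leq L-1$. Applying the strong Markov property at each jump of $X^D$ and the inequality $\prod(1-a_i) \geq 1 - \sum a_i$,
$$\P_{(n,0)}(E_{\eta^*}) = \prod_{i=0}^{L-1} p_i \geq 1 - \frac{c_1}{n^{\theta_1}},$$
with $\theta_1 = \min_{1 \leq i \leq L-1}(\alpha_i - \alpha_{i-1}) = \min\{\alpha_{i-1} - \alpha_{i-2} : 2 \leq i \leq L\}$, verifying Assumption \ref{assump}(i).

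For Assumption \ref{assump2}, consider any boundary state $(m,0)$. A reaction $z_j \to z_{j+1}$ has positive rate there iff $\beta_j = 0$ and $\alpha_j \leq m$. By strict monotonicity of $\{\beta_j\}$ starting from $\beta_0 = 0$, only $j = 0$ qualifies, so the total exit rate equals $\kappa_0$, and firing $\emptyset \to z_1$ moves the chain immediately off the boundary (since $\beta_1 \geq 1$). Hence $\mu_1$ under $\P_x$ is $\mathrm{Exp}(\kappa_0)$-distributed, which gives $\P_x(\nu_1 \geq t) \geq \P_x(\mu_1 \geq t) = e^{-\kappa_0 t}$; almost-sure finiteness of $\{\nu_i\}$ then follows from iterated application of the cycle-probability bound above together with a standard Borel--Cantelli / recurrence argument for cyclic networks. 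Combining both assumptions with Theorem \ref{thm} (via Remark \ref{Rk:T2a}) yields the conclusion with $\theta = \theta_1$.

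The main technical obstacle is the uniform-in-$i$ estimate $p_i \geq 1 - C_i n^{-(\alpha_i - \alpha_{i-1})}$: the $\Theta(n^{\alpha_j})$ asymptotic is immediate for each fixed $i,j$, but making the constants $C_i$ and the threshold $N_0$ explicit --- so that a single $c_1$ works simultaneously for every $i$ --- requires a careful expansion of the ratio $\frac{(n+\alpha_i)!}{(n+\alpha_i-\alpha_j)!}\,\frac{\beta_i!}{(\beta_i-\beta_j)!}$ and a uniform control on the lower-order terms. Beyond this bookkeeping, the proof is a direct computation.
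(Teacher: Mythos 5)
Your proposal is correct and follows essentially the same route as the paper: both identify the single dominating cycle $\eta^0=(z_1-z_0,\ldots,z_0-z_{L-1})$, use the vanishing of $\lambda_j$ at states with too few $B$ molecules together with strict monotonicity of $\{\alpha_i\}$ to show the dominant reaction at each step fails with probability $O(n^{-(\alpha_{i-1}-\alpha_{i-2})})$, and verify Assumption \ref{assump2} via the $\mathrm{Exp}(\kappa_0)$ holding time at the boundary. The only difference is organizational --- you bound $\prod_i p_i$ directly via $\prod(1-a_i)\ge 1-\sum a_i$, whereas the paper decomposes the complement $E_{\eta^0}^c$ by the first deviating step $E_{\eta^0}^{\neq i}$ --- which amounts to the same computation.
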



Next, to strengthen Theorem \ref{prop:cycle} from $\theta=\theta_1$ to $\theta=1+\theta_1$, we need to impose further assumptions on the stoichiometric coefficients. Assumption \ref{assump:cyclic} below is one such example. 

\begin{assumption}\label{assump:cyclic}
The  constants $\{\alpha_i,\beta_i\}_{i=1}^{L-1}$ are such that
\begin{enumerate}
    \item $\{\alpha_i\}_{i=1,...,L-1}$ is a sequence of increasing integers such that $2\alpha_{i}-\alpha_{i+1}- \alpha_{i-1}\not = 0$ for all $i=1,2,...,L-2$ and $\alpha_{L-1}-\alpha_{L-2}-\alpha_1 \not = 0$,  and 
    \item $\beta_i = i$ for $i=0,1,..,L-1$.
\end{enumerate}
\end{assumption}

We present this assumption because it simplifies the identification of the trajectory  $\eta^i$ in $\mathcal{T}^{(2)}$:
upon exiting from the dominant cyclic trajectory $\eta^0$, the dominant reactions can be easily identified by the number of B species. In particular for any $1\leq j\leq L$, if there are $j$ copies of B species, then reaction $z_j \rightarrow z_{j+1}$ is the dominant reaction. The curious reader can see \eqref{eq:example network} below for an example  reaction network that falls into the class of reaction networks \eqref{eq:cyclicmodel} and satisfies Assumption \ref{assump:cyclic}. 

\begin{thm}\label{prop:cycle2}
Consider a reaction network  given in \eqref{eq:cyclicmodel} and assume all conditions of Theorem \ref{prop:cycle} hold.
Suppose, furthermore, that Assumption \ref{assump:cyclic} holds. 
 Then Assumptions \ref{assump}(i) and (ii) hold with 
\[
    \theta_1 = \min_{i} \{\alpha_{i-1} - \alpha_{i-2}: 2\leq i \leq L\}, \qquad  \theta_2  = \min_{2\leq i\leq L: \left(\alpha_{i-1} - \alpha_{i-2}\right)>\theta_1} \left(\alpha_{i-1} - \alpha_{i-2}\right) \wedge 2\theta_1 > \theta_1,
\]
and Assumption \ref{assump2} is satisfied with $\kappa=\kappa_0$. In particular, Theorem \ref{thm}, and Corollary \ref{Cor:Thm}  hold with 
\[
    \theta=1+\theta_1 =1+ \min_{i} \{\alpha_{i-1} - \alpha_{i-2}: 2\leq i \leq L\}.
\]
\end{thm}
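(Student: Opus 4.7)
The plan is to invoke Theorem \ref{prop:cycle}, which already establishes Assumption \ref{assump}(i) with the stated $\theta_1$ and Assumption \ref{assump2} with $\kappa=\kappa_0$, and to strengthen the picture by exhibiting a finite family $\mathcal{T}^{(2)}$ of ``dominant excursions'' that verifies Assumption \ref{assump}(ii) with the stated $\theta_2$. Theorem \ref{thm} then delivers $\theta = \min(1+\theta_1,\,\theta_2) = 1+\theta_1$; the last equality uses that $\{\alpha_i\}$ is a strictly increasing integer sequence, forcing $\theta_1\ge 1$, together with the fact that the gaps $g_i:=\alpha_{i-1}-\alpha_{i-2}$ are integers so $\theta_2 = \min(\min_{g_i>\theta_1} g_i,\,2\theta_1)\ge 1+\theta_1$. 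The two parts of Assumption \ref{assump:cyclic} play distinct roles: part (2), $\beta_j=j$, pins down the admissible reactions at each interior state so the dominant/sub-dominant pair can be read off from $\{\alpha_i\}$; part (1) forces the dominant excursions to end at boundary points strictly displaced from their starting point.

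Before constructing $\mathcal{T}^{(2)}$ I would record the local rate comparison that drives every subsequent estimate. Because $\beta_j = j$, at any interior state $(x_A,\,j)$ with $x_A = n + O(1)$ and $1\le j\le L-1$ the only admissible reactions are $z_0\to z_1,\ldots,z_j\to z_{j+1}$, with mass-action rates $\kappa_0, \Theta(n^{\alpha_1}),\ldots,\Theta(n^{\alpha_j})$. Since $\{\alpha_i\}$ is strictly increasing, the dominant admissible reaction at $(x_A,j)$ is uniquely $z_j\to z_{j+1}$ and the sub-dominant is $z_{j-1}\to z_j$. Consequently, at each step of the embedded chain $X^D$ the conditional probability of the dominant reaction is $1-\Theta(n^{-(\alpha_j-\alpha_{j-1})})$, of the sub-dominant is $\Theta(n^{-(\alpha_j-\alpha_{j-1})})$, and of any tertiary-or-lower admissible reaction (when it exists) is $O(n^{-(\alpha_j-\alpha_{j-2})})\le O(n^{-2\theta_1})$.

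For each $i\in\{2,3,\ldots,L\}$ I would define the excursion $\eta^i$ as follows: match $\eta^0$ for its first $i-1$ steps; at step $i$ fire the sub-dominant reaction $z_{i-2}\to z_{i-1}$ in place of $z_{i-1}\to z_i$; thereafter fire the dominant admissible reaction at each successive step until $x_B$ returns to $0$. A routine bookkeeping gives $|\eta^i|=L$ with $\gamma_{\eta^i}$ ending at $(2\alpha_{i-1}-\alpha_i-\alpha_{i-2},\,0)$ for $2\le i\le L-1$, and $|\eta^L|=2L$ with endpoint $(\alpha_{L-1}-\alpha_{L-2}-\alpha_1,\,0)$. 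The nonvanishing conditions in Assumption \ref{assump:cyclic}(1) force each first coordinate to be nonzero, so every $\gamma_{\eta^i}$ ends in $\{x_d=0\}\setminus\{\mathbf{0}\}$, and $\mathcal{T}^{(2)}:=\{\eta^2,\ldots,\eta^L\}$ is a finite family of finite-length sequences disjoint from $\mathcal{T}^{(1)}=\{\eta^0\}$.

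The final step is the probability estimate. Since the events $\{E_\eta\}_{\eta\in\mathcal{T}^{(1)}\cup\mathcal{T}^{(2)}}$ are pairwise disjoint, the complement under $\mathbb{P}_{(n,0)}$ consists of initial trajectories that either make two or more sub-dominant deviations, or at least one tertiary-or-lower deviation, during the first $2L$ steps. Using the rate comparison of the previous paragraph together with the strong Markov property of $X^D$, each such failure mode contributes $O(n^{-2\theta_1})$; summing over the $O(1)$ possible placements gives $\mathbb{P}_{(n,0)}(\cup_\eta E_\eta)\ge 1 - c_2 n^{-2\theta_1}\ge 1 - c_2 n^{-\theta_2}$, verifying Assumption \ref{assump}(ii). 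Theorem \ref{thm} then yields the stated $\theta=1+\theta_1$. I expect the main obstacle to be precisely this bookkeeping: after a sub-dominant deviation the trajectory visits a perturbed state, and one must check that the rate comparison there is still uniformly governed by the same $\alpha_j$-gaps in $n$. This is where Assumption \ref{assump:cyclic}(2) is essential, since it makes the admissible reactions at any state a function of $x_B$ alone, independent of the history of the trajectory.
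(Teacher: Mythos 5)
Your proposal is correct and follows the paper's strategy very closely: the same excursions $\eta^i$ (follow $\eta^0$ until step $i$, fire the second-most-likely reaction $z_{i-2}\to z_{i-1}$ there, then follow dominant reactions back to the axis, with the special length-$2L$ treatment of $i=L$), the same endpoint computations $2\alpha_{i-1}-\alpha_i-\alpha_{i-2}$ and $\alpha_{L-1}-\alpha_{L-2}-\alpha_1$, and the same use of Assumption \ref{assump:cyclic}(1) to keep those endpoints away from the origin. The one genuine divergence is the choice of $\mathcal{T}^{(2)}$. The paper takes $\mathcal{T}^{(2)}=\{\eta^i:\alpha_{i-1}-\alpha_{i-2}=\theta_1\}$, so exits from $\eta^0$ at a step whose gap exceeds $\theta_1$ are simply absorbed into the failure probability; this is why its $\theta_2$ carries the extra term $\min_{i:\,\alpha_{i-1}-\alpha_{i-2}>\theta_1}(\alpha_{i-1}-\alpha_{i-2})$ and why its complement splits into three scenarios. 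You include all of $\eta^2,\dots,\eta^L$, which collapses the complement to two failure modes (one tertiary-or-lower deviation, or two sub-dominant deviations) and yields the formally stronger exponent $\theta_2=2\theta_1$; since $1-c\,n^{-2\theta_1}\ge 1-c\,n^{-\theta_2}$ for the paper's $\theta_2$, your version of Assumption \ref{assump}(ii) implies the stated one, and both give $\theta_2\ge 1+\theta_1$ (the gaps are positive integers), hence the same final $\theta=1+\theta_1$. The only step I would insist you write out is the uniformity check you flag at the end: after the single sub-dominant deviation the $A$-coordinate is shifted by an $n$-independent constant and $x_B$ determines the admissible reactions, so every subsequent deviation probability is still $O(n^{-\theta_1})$ with the same gaps; this is exactly the paper's scenario 3, and it is where $\beta_j=j$ and the finiteness of $L$ are used.
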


\begin{remark}\label{rem:relax uniq for crns}
In Lemma \ref{lem:non unique}, we  show that for any stochastic reaction networks of the form \eqref{eq:cyclicmodel}, if a unique stationary distribution exists, then Assumption \ref{A:Positiveecurrent} is implied by condition 1 of Assumption \ref{assump:cyclic}. Hence the conclusions of Corollaries \ref{cor:mixing} and \ref{Cor:Thm} hold  under the conditions of Theorem \ref{prop:cycle2}.
\end{remark}


\begin{remark}
  The rate constants $\kappa_i$'s can be always selected so that the associated stochastic model $X(t)$ admits a stationary distribution. For example, if $\kappa_i$ is equal to a positive constant $c$ for all $i\in \{0,1,...,L-1\}$, then $(1,1)^\top$ is a complex balanced steady state \eqref{E:complexBalance}. This can be checked for each complex $\alpha_i A + \beta_i B$:
  \[
    cx_A^{\alpha_{i-1}} x_B^{\beta_{i-1}} =  c = cx_A^{\alpha_{i}} x_B^{\beta_{i}},
  \]
  and hence $X(t)$ has a unique stationary distribution  on the closed subspace accessible from the initial condition by Theorem \ref{thm:def0}. 
  Consequently, under such choice of rate constants $\kappa_i$'s, Corollary \ref{cor:mixing} holds with $\theta$ given in Theorem \ref{prop:cycle2}.
\end{remark}

\begin{remark}\label{rem:simple cyclic}
Using a simple case of the reaction networks \eqref{eq:cyclicmodel}, we visualize the paths in the subset $\mathcal T^{(1)}$  and $\mathcal T^{(2)}$ that satisfy Assumption \ref{assump}. 
For the case of $\alpha_1=2, \alpha_2=3, \beta_1=1, \beta_2=2$ and $L=3$ in \eqref{eq:cyclicmodel} (which is the case of $\alpha=2$ in Example \ref{ex:toy example 2}), the path in Figure \ref{fig:crn1} illustrates the path $(n,0)+\gamma_{\eta^0}$, where $\eta^0 \in \mathcal T^{(1)}$ consists of all dominant transitions and $\gamma_{\eta^0}$ is a cycle. The path in Figure \eqref{fig:crn2} illustrates the path $(n,0)+\gamma_{\eta^L}$, where $\eta^L \in \mathcal T^{(2)}$ consists of all dominant transitions except the $L$ th transition by which the associated Markov chain escapes the path $(n,0)+\gamma_{\eta^0}$, as highlighted by the red arrow in Figure \eqref{fig:crn2}. The precise construction of $\eta^0$ and $\eta^L$ are given in \eqref{eq:T1traj} and \eqref{eq:T2traj}, respectively. Furthermore, for the general models \eqref{eq:cyclicmodel}, the desired paths are explicitly constructed in Section \ref{sec:proof 2}.  



\begin{figure}[!h]
    \centering
\begin{equation*}
     \tikzset{state/.style={inner sep=1pt}}
   \begin{tikzpicture}[baseline={(current bounding box.center)}, scale=1]
   \node[state] (1) at (5,0)  {$(n,0)$};
   \node[state] (2) at (7,1)  {};
   \node[state] (3) at (8,2)  {$(n+3,2)$};
    \node[state] (4) at (7.8,1)  {$(n+2,1)$};
   \path[->]
    (1) edge node {} (2)
    (2) edge node {} (3)
    (3) edge node {} (1);
   \draw[->] (0,0.2) -- (10,0.2) node[right] {$A$};
   \draw[->] (0.2,0) -- (0.2,3) node[above] {$B$};
  \end{tikzpicture}
\end{equation*}
    \caption{The path $(n,0)+\gamma_{\eta^0}$ in Remark \ref{rem:simple cyclic}, where $\eta^0\in \mathcal T^{(1)}$.}
    \label{fig:crn1}
\end{figure}
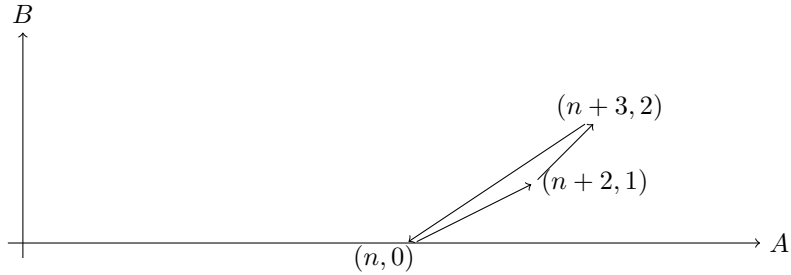

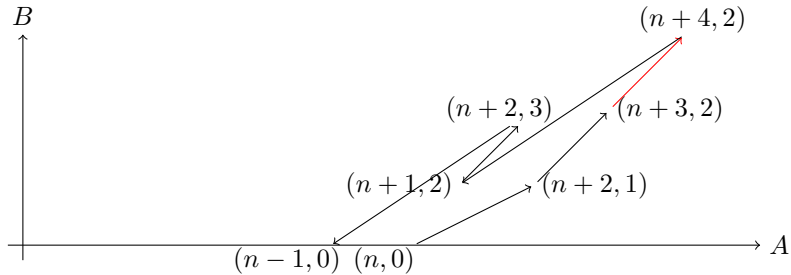
\begin{figure}[!h]
    \centering
  \begin{equation*}
     \tikzset{state/.style={inner sep=1pt}}
   \begin{tikzpicture}[baseline={(current bounding box.center)}, scale=1]
   \node[state] (1) at (5,0)  {$(n,0)$};
   \node[state] (2) at (7,1)  {};
   \node[state] (3) at (8,2)  {};
    \node[state] (4) at (9,3)  {};
  \node[state] (5) at (6,1)  {};
    \node[state] (6) at (7,2)  {$(n+2,3) \ \ \ \  \ \ \ \  $};
    \node[state] (7) at (4,0)  {$(n-1,0) \ \ \ \ \ $};
    \node[state] (2-1) at (7.8,1)  {$(n+2,1)$};
    \node[state] (3-1) at (8.8,2)  {$(n+3,2)$};
    \node[state] (4-1) at (9.1,3.2)  {$(n+4,2)$};
    \node[state] (5-1) at (5.2,1)  {$(n+1,2)$};
   \path[->]
    (1) edge node {} (2)
    (2) edge node {} (3)
    (3) edge[red] node {} (4)
    (4) edge node {} (5)
    (5) edge node {} (6)
    (6) edge node {} (7);
   \draw[->] (0,0.2) -- (10,0.2) node[right] {$A$};
   \draw[->] (0.2,0) -- (0.2,3) node[above] {$B$};
  \end{tikzpicture}
\end{equation*}
    \caption{The path $(n,0)+\gamma_{\eta^L}$ in Remark \ref{rem:simple cyclic}, where $\eta^L\in \mathcal T^{(2)}$.}
    \label{fig:crn2}
\end{figure}
\end{remark}




\bigskip

\subsection{Upper bound of a first passage time}\label{sec: upper bound}

In Theorem \ref{prop:cycle} and Theorem \ref{prop:cycle2}, we established lower bounds of the mixing times of the Markov chain associated with the reaction networks \eqref{eq:cyclicmodel}. How about an upper bound?
An upper bound of mixing times can typically be obtained using the Foster-Lyapunov criteria \cite{meyn1993stability}, a probabilistic coupling argument \cite{den2012probability, levin2017markov}, a geometric method based on path-decomposition \cite{saloff1997lectures, anderson2023new}, or spectral gap methods \cite{Han16}. Unfortunately, due to the special behavior of the continuous-time Markov chain  near the boundary of $\Z_{\ge 0}^2$, none of these seem to work easily.

For example, consider Example \ref{ex:toy example 2}. The generator $\mathcal A$ of the continuous-time Markov chain $X$ associated with \eqref{eq:example network} satisfies
\begin{align}\label{eq: generator}
    \mathcal A V(x)&= \lambda_1(x) \left (V(x+(\alpha,1)^\top)-V(x)\right )+\lambda_2(x)\left (V(x+(\alpha-1,1)^\top)-V(x)\right ) \notag\\
    &+
   \lambda_3(x)\left (V(x-(2\alpha-1,2)^\top)-V(x)\right ),
    \end{align}
For some suitable function $V:\Z^2_{\ge 0} \to \mathbb R_{\ge 0}$, suppose that
\begin{align}\label{eq:exp_ergo}
\mathcal{A}V(x) \le -aV(x)+b \quad \text{for all} \ \ x \in \mathbb{S},    
\end{align}
where $\mathcal A$ is the Markov generator of a continuous-time Markov process $X$ on a countable state space $\mathbb Z^2_{\ge 0}$. 
Then we have so-called exponential ergodicity that means $\Vert P^t(x)-\pi \Vert_{\text{TV}} \le C V(x) e^{-\eta t}$ for some constants $C>0$ and $\eta >0$ \cite{meyn1993stability}. This exponential ergodicity directly implies an upper bound of the mixing time as in \eqref{E:mixing_upper}.

Unfortunately, because of the special behavior at the boundary described in Example \ref{ex:toy example 2}, constructing a positive function $V$ satisfying \eqref{eq:exp_ergo} is challenging for the Markov chains associated with \eqref{eq:cyclicmodel}. To see this, we use the reaction network \eqref{eq:example network} showing later as a key example in Example \ref{ex:toy example 2}. First note that it is typical that when $|x| \gg 1$ identifying a dominant drift term in \eqref{eq: generator} is the key in the construction of a desired $V$ satisfying \eqref{eq:exp_ergo}. At each state of $(n,0)$, $(n+\alpha,1)$, and $(n+2\alpha-1,2)$, the dominant drift is given by reactions $\emptyset\to 2A+B, 2A+B\to 3A+2B, 3A+2B\to \emptyset$, respectively. Let $\lambda_1$, $\lambda_2$, and $\lambda_3$ denote the reaction intensities for these reactions given by \eqref{mass}, respectively. To achieve \eqref{eq:exp_ergo} at $x=(n,0)$, $(n+\alpha,1)$, and $(n+2\alpha-1,2)$,  we try to construct $V$ such that 
  \begin{align*}
     V(n+\alpha,1)-V(n,0)<0, \quad  V(n+2\alpha-1,2)-V(n+\alpha,1)<0,  \quad \text{and} \quad V(n,0)- V(n+2\alpha-1,1)<0.  
\end{align*}  
Obviously, these inequalities cannot hold at the same time.
Hence constructing a function $V$ satisfying \eqref{eq:exp_ergo} is not straightforward.

Due to this difficulty, rather than the upper bounds of the mixing times for $X=(X_A,X_B)$ associated with the reaction networks given in \eqref{eq:cyclicmodel}, in this section, we alternatively find the upper bound of the \textbf{first passage time}
\begin{equation}\label{Def:FPT}
    \tau_{C}=\inf\{t\in\R_{\ge 0} :\, \Vert X(t) \Vert_{2-1,\infty}=X_A(t) \leq C \},
\end{equation}
where $C\in(0,\infty)$ is a constant to be chosen.

The first passage time \eqref{Def:FPT} is closely related to the mixing time \cite{peres2015mixing}. By the coupling inequality \cite{den2012probability}, the total variation norm $\Vert P^t(x,\cdot)-\pi\Vert_{\text{TV}}$ is bounded above by the coupling probability $2P(X(t)=X'(t))$, where $X(t)$ and $X'(t)$ are the same Markov processes but $X'$ is initiated with the stationary distribution while $X$ is initiated at state $x$. When the stationary distribution is concentrated around the origin, then event 
$\{X(t)=X'(t)\}$ would take place when $X(t)$ is close to enough to the origin. 


\begin{prop}\label{prop:mean first hitting}
    Consider the reaction networks given in \eqref{eq:cyclicmodel}. 
If the choice of $\alpha_i$'s and $\beta_i$'s satisfy Assumption \ref{assump:cyclic}, then there exists $C>0$ such that the associated stochastic model satisfies 
\begin{align}\label{eq:mean first passage time}
  \mathbb  E_{(n,0)}[\tau_{C}] \le c n^{\theta} \quad \text{for some $c>0$,}
\end{align}
 where $\tau_{C}$ is as \eqref{Def:FPT} and $\theta$ is as in Theorem \ref{prop:cycle2}.
\end{prop}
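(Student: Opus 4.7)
The plan is to reduce the bound on $\E_{(n,0)}[\tau_C]$ to an analysis of the embedded boundary chain $Y_k := X_A(\nu_k)$, where $\nu_k$ is the $k$-th return time of $X$ to the boundary face $\{x_B = 0\}$ defined in \eqref{eq:StoppingDef}. At each state $(m,0)$ only the reaction $\emptyset \to z_1$ can fire, at rate $\kappa_0$, and away from the boundary the reactions $z_j \to z_{j+1}$ fire at rates of order $x_A^{\alpha_j}$ with $\alpha_j \ge 1$, so excursions from the boundary back to it are short. This should give
\[
\sup_{x \in \{x_B = 0\}} \E_x[\nu_1] \le C_1 < \infty.
\]
Iterating the strong Markov property at the $\nu_k$ and setting $\tau^Y_C := \inf\{k : Y_k \le C\}$, one obtains $\E_{(n,0)}[\tau_C] \le C_1 \, \E_{(n,0)}[\tau^Y_C]$, so it suffices to prove $\E_{(n,0)}[\tau^Y_C] \le c\, n^{\theta}$ for some $c>0$.

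For the chain $Y$, I would apply a Foster--Lyapunov argument with test function $\tilde V(m) := m^{\theta}$, $\theta = 1+\theta_1$. Let $q_\ell(m) := \P(Y_{k+1}-Y_k = \ell \mid Y_k = m)$. By Assumption \ref{assump}(i) the dominant cycles in $\mathcal{T}^{(1)}$ return $Y$ unchanged, so $q_0(m) \ge 1 - c_1/m^{\theta_1}$; by Assumption \ref{assump}(ii) the visible rare excursions in $\mathcal{T}^{(2)}$ land on $\{x_B=0\}$ within a bounded distance of the starting point (uniformly in $m$); and the residual events have total probability of order $m^{-\theta_2}$. Combined with an explicit enumeration of the endpoints of the trajectories in $\mathcal{T}^{(2)}$ (in the spirit of Remark \ref{rem:simple cyclic}) and a tail bound ruling out overly large jumps among the residual excursions (using the exponentially fast off-boundary dynamics), this would yield the moment estimates
\[
\E[Y_{k+1}-Y_k \mid Y_k = m] \le -\frac{c_*}{m^{\theta_1}}, \qquad \E[(Y_{k+1}-Y_k)^2 \mid Y_k = m] \le \frac{c^{**}}{m^{\theta_1}},
\]
for some $c_*, c^{**}>0$ and all $m$ large. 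Taylor expanding $\tilde V$ and using $\theta-1 = \theta_1$ then yields
\[
\E[\tilde V(Y_{k+1}) - \tilde V(Y_k) \mid Y_k = m] \le -c_*\theta + \mathcal{O}(m^{-1}) \le -\tfrac{1}{2} c_* \theta
\]
for all $m > C$ with $C$ large, and the standard hitting-time version of the Foster--Lyapunov criterion gives $\E_{(n,0)}[\tau^Y_C] \le 2 \tilde V(n)/(c_*\theta) \le c\, n^{\theta}$, completing the argument.

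The main obstacle is establishing the negative drift estimate $\E[Y_{k+1}-Y_k \mid Y_k = m] \le -c_*/m^{\theta_1}$: both its magnitude and, crucially, its sign. Assumption \ref{assump} controls the probabilities of ``bad'' events but not their signed contributions, so the sign must be extracted by explicitly identifying all excursions in $\mathcal{T}^{(2)}$ for the network \eqref{eq:cyclicmodel} under Assumption \ref{assump:cyclic}, computing their endpoints on the boundary, and showing that the resulting weighted average of the induced $X_A$-displacements is strictly negative. Heuristically the negative sign arises from the reaction $z_{L-1} \to \emptyset$ which removes $\alpha_{L-1}$ copies of $A$ and drives the chain back toward the origin (and is ultimately responsible for positive recurrence), but making this quantitative, uniformly in $m$, is the delicate step of the proof.
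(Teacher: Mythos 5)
Your overall strategy coincides with the paper's: reduce to the one-dimensional boundary chain $Z(i)=X_A(\nu_i)$, run a Foster--Lyapunov/Dynkin argument with $V(m)=m^{1+\theta_1}$, and extract a drift of order $-m^{-\theta_1}$ so that the hitting time is $O(n^{1+\theta_1})$. However, the step you defer --- establishing $\E[Y_{k+1}-Y_k\mid Y_k=m]\le -c_*/m^{\theta_1}$, ``both its magnitude and, crucially, its sign'' --- is not a finishing detail; it is essentially the entire content of the paper's proof (Lemma \ref{lem:transition prob of Z}). The paper gets the sign not by averaging all excursion displacements, but by splitting: (a) for each $i$ in the argmin set $I=\argmin_{2\le i\le L}\{\alpha_{i-1}-\alpha_{i-2}\}$, the excursion $\eta^i$ (follow the dominant cycle but fire the \emph{second} most likely reaction at step $i$) has probability at least $c_1/n^{\theta_1}$ and net $A$-displacement $(\alpha_{i-1}-\alpha_{i-2})-(\alpha_i-\alpha_{i-1})<0$, strictly negative exactly because $i$ attains the minimal gap and condition 1 of Assumption \ref{assump:cyclic} excludes equality; and (b) every upward transition satisfies $n^{\theta_1}p_{n,n+k}\to 0$, so the upward first-moment contribution is $o(n^{-\theta_1})$ and cannot cancel (a). Note that your heuristic attributing the negative sign to $z_{L-1}\to\emptyset$ removing $\alpha_{L-1}$ copies of $A$ is not the operative mechanism --- every completed cycle is net zero; the drift comes from which step gets repeated when the trajectory deviates.

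A second, more technical gap: your moment estimates are too weak to close the Lyapunov computation when $\theta_1\ge 2$. With $V(m)=m^{\theta_1+1}$, the increment $V(m+\Delta)-V(m)$ contains terms up to $\Delta^{\theta_1+1}$, so a bound on $\E[(Y_{k+1}-Y_k)^2\mid Y_k=m]$ does not control $\E[V(Y_{k+1})-V(Y_k)]$ unless the jumps are bounded --- and they are not, since an excursion can make many upward steps before returning to the boundary. The paper handles this by directly bounding the Lyapunov-increment tail $\E_n[(Z(1)^{\theta_1+1}-Z(0)^{\theta_1+1})\,1_{\{Z(1)-Z(0)\ge K+1\}}]$ via a combinatorial count of how many high-$B$ firings are needed to produce a displacement $k$ (Step 2 of Lemma \ref{lem:transition prob of Z}), each such firing costing a factor $O(n^{-\theta_1})$. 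You would need to replace your second-moment claim with a bound of this type (or a $(\theta_1+1)$-th moment bound) for the Taylor/telescoping step to go through.
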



\begin{remark}\label{rem:sharper fpt}
    For a sharper result, we can explore an upper bound of $\mathbb \E[\tau'_C]$, where
    \begin{align*}
        \tau'_C=\inf\{t\ge 0 : \Vert X(t)\Vert_\infty \le C\}.
    \end{align*}
    To do that, we need to handle the behavior of the model \eqref{eq:cyclicmodel} on the other boundary $\{x: x_1=0\}$. This task is challenging since the dominant reactions around this boundary are completely different from the previously used dominant reactions. Alternatively, we show the simulations about this first passage time in the next section.
\end{remark}

\subsection{Examples and simulation studies: mixing times and first passage times}\label{S:simulations}

In this section, we present simulation results for two examples of stochastic reaction networks which exhibit boundary-induced slow-mixing behaviors, as described in Remark \ref{rmk:slowmixing}. 
For simplicity we focus on dimension $d=2$. We write the coordinates of  the associated continuous-time Markov chain at time $t$, $X(t)$, as $(X_1(t),\,X_2(t))=(X_A(t),\,X_B(t))$ and think of them as the number of molecules for two chemical species $A$ and $B$.

For each example we shall estimate the \textbf{mixing time} $t^{0.2}_{\rm mix}(x)$ (defined in \eqref{E:mixing}) and the \textbf{first passage time}
\begin{equation*}
    \tau'_{5}=\inf\{t\in\R_{\ge 0} : \Vert X(t) \Vert_{\infty} \leq 5 \},
\end{equation*}
defined in Remark \ref{rem:sharper fpt}. We shall estimate the mean first passage time by averaging over $100$ independent trajectories. To estimate the mixing time starting at $x$, we generate $100$ i.i.d. sample trajectories $\{X^{(i)}_t\}_{i=1}^{100}$ of $X$ starting at $x$ using the Gillespie's algorithm \cite{gillespie1976general}. The probability $P^t(x,y)=\P_x(X_t=y)$ is approximated by the empirical distribution at time $t$, namely 
\[
    P^t(x,y) \approx \frac{1}{100} \sum_{i=1}^{100} \mathbf{1}_{\{X_t^{(i)} = y\}}, 
\]
where $\mathbf{1}_A$ is the indicator of event $A$. 
This approximation is computed for $y\in [0,100]^2$ and then inserted into the approximation of total variation below: 
\begin{align}
    \Vert P^t(x,\cdot) - \pi \Vert_{\rm TV} =& \frac{1}{2} \sum_{y\in \Z^2_{\geq 0}} \vert P^t(x,y) - \pi(y) \vert \label{eq:tv and l1}\\
    \approx& \frac{1}{2} \sum_{y\in [0,100]^2 } \vert P^t(x,y) - \pi(y) \vert + \frac{1}{2} \sum_{y\not \in [0,100]^2 } P^t(x,y) \notag\\
    =& \frac{1}{2} \sum_{y\in [0,100]^2 } \vert P^t(x,y) - \pi(y) \vert + \frac{1}{2} \left(1-\sum_{y\in [0,100]^2 } P^t(x,y)    \right).\label{def:mixing_sim}
\end{align}
The derivation of the identity \eqref{eq:tv and l1} can be found in \cite{levin2017markov}. The approximation above is accurate in our simulations since $\pi$ captures most of the mass of the stationary distribution in the set $[0,100]^2$.  For computational efficiency, we estimate the mixing time in \eqref{E:mixing} by computing the total variation distance  $ \Vert P^t(x,\cdot) - \pi \Vert_{\rm TV}$ for $t$ being multiples of 100 until it falls below the threshold $\varepsilon = 0.2$.

\begin{example} \label{ex: toy example}
Consider the reaction network 
\begin{align}\label{model22}
    B \xrightleftharpoons[1]{1} 2B, \quad \emptyset \xrightleftharpoons[1]{1} A+B.
\end{align}
Four reactions are involved in this network, whose reaction vectors are denoted by 
\[
    \eta_1 = \begin{pmatrix}
    0 \\ 1
    \end{pmatrix},\qquad  \eta_2 = \begin{pmatrix}
    0 \\ -1
    \end{pmatrix},\qquad  \eta_3 = \begin{pmatrix}
    1 \\ 1
    \end{pmatrix},\qquad  \eta_4 = \begin{pmatrix}
    -1 \\ -1
    \end{pmatrix}.
\]
We will show that the corresponding continuous-time Markov process $X$ satisfies all assumptions in Corollary \ref{cor:mixing}, and the 
mixing time of $X$ is of order $n^2$ when the process starts at $(n,0)$, as $n\rightarrow \infty$.

First, we verify Assumption \ref{assump}, with $\theta_1=1$, $\theta_2=2$ and the following sequences 
\begin{align*}
    \mathcal{T}^{(1)}  = \left\{ \eta^1 = (\eta_3, \eta_4),  \eta^2 = (\eta_3,\eta_3, \eta_4,\eta_4) \right\},   \quad
    \mathcal{T}^{(2)}  = \left\{ \eta^3 = (\eta_3, \eta_1,\eta_4, \eta_4) \right\}.
\end{align*}
Figures \ref{fig:crn3} and \ref{fig:crn3-2} visualize paths $(n,0)+\eta^i$ for $i=1,2,3$.
\begin{figure}[!h]
\centering
    \begin{equation}
     \tikzset{state/.style={inner sep=1pt}}
   \begin{tikzpicture}[baseline={(current bounding box.center)}, scale=1]
   \node[state] (1) at (3,0)  {$(n,0)$};
   \node[state] (2) at (4,1)  {$(n+1,1)$};
   \node[state] (11) at (10,0)  {$(n,0)$};
   \node[state] (12) at (11,1)  {};
   \node[state] (13) at (12,2)  {$(n+2,2)$};
    \node[state] (14) at (12,1)  {$(n+1,1)$};
   \path[->]
    (1) edge[bend right=10] node {} (2)
    (2) edge[bend right=10] node {} (1)
    (11) edge[bend right=10] node {} (12)
    (12) edge[bend right=10] node {} (13) 
    (13) edge[bend right=10] node {} (12) 
    (12) edge[bend right=10] node {} (11); 
   \draw[->] (0,0.2) -- (5,0.2) node[right] {$A$};
   \draw[->] (0.2,0) -- (0.2,3) node[above] {$B$};
   \draw[->] (7,0.2) -- (12,0.2) node[right] {$A$};
   \draw[->] (7.2,0) -- (7.2,3) node[above] {$B$};
  \end{tikzpicture}
\end{equation}
    \caption{The paths $(n,0)+\eta^1$ and $(n,0)+\eta^2$ for \eqref{model22} }
    \label{fig:crn3}
\end{figure}
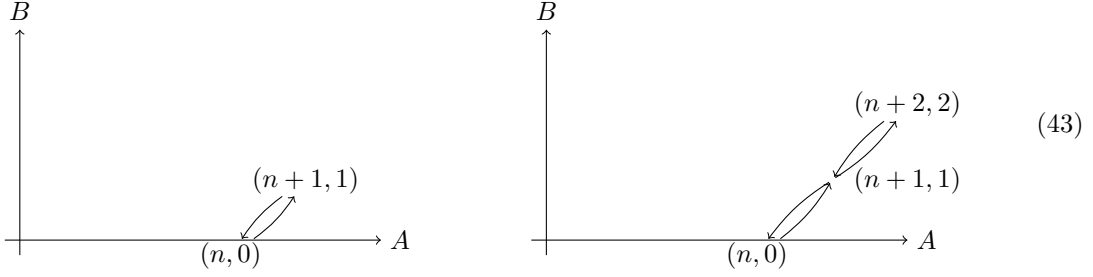

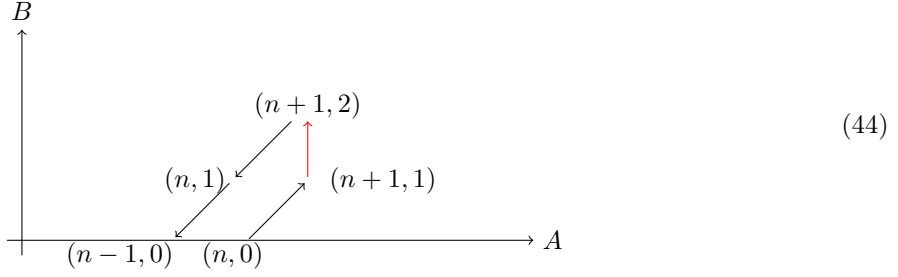
\begin{figure}[!h]
    \begin{equation}
     \tikzset{state/.style={inner sep=1pt}}
   \begin{tikzpicture}[baseline={(current bounding box.center)}, scale=1]
   \node[state] (1) at (3,0)  {$(n,0)$};
   \node[state] (2) at (4,1)  {};
   \node[state] (3) at (4,2)  {$(n+1,2)$};
    \node[state] (4) at (3,1)  {};
   \node[state] (5) at (2.2,0.2)  {};
    \node[state] (7) at (5,1)  {$(n+1,1)$};
    \node[state] (8) at (2.5,1)  {$(n,1)$}; \node[state] (9) at (1.5,0)  {$(n-1,0)$};
   \path[->]
    (1) edge node {} (2)
    (2) edge[red] node {} (3) 
    (3) edge node {} (4) 
    (4) edge node {} (5); 
   \draw[->] (0,0.2) -- (7,0.2) node[right] {$A$};
   \draw[->] (0.2,0) -- (0.2,3) node[above] {$B$};
  \end{tikzpicture}
\end{equation}
    \caption{The path $(n,0)+\eta^3$ for \eqref{model22} }
    \label{fig:crn3-2}
\end{figure}

Starting with initial condition $(n,0)$, the path $(n,0) \rightarrow (n+1,1) \rightarrow (n,0)$ occurs with large probability,  which can also be specified by the sequence of transitions $\eta^1 = (\eta_3, \eta_4)$. A possible rare event occurs when the second transition follows a different reaction than $\eta_4$, either $\eta_1$ or $\eta_3$. In either cases, the trajectory return to the $x-$axis by two firings of $\eta_4$. Probability of these paths can be computed by
\begin{align*}
    \P_{(n,0)}\left( E_{\eta^1} \right ) =  &\, \frac{n+1}{n+3} , \quad     \P_{(n,0)}\left( E_{\eta^2} \right ) =  \, \frac{1}{n+3} \cdot \frac{2n+4}{2n+9} \cdot \frac{n+1}{n+3} , \quad    \P_{(n,0)}\left( E_{\eta^3} \right ) =  \, \frac{1}{n+3} \cdot \frac{2n+2}{2n+7} \cdot \frac{n}{n+2} , 
\end{align*}
In particular, since both $\gamma_{\eta^1}$ and $\gamma_{\eta^2}$ are cyclic paths, Hence $\mathcal{T}^{(1)} = \{\eta^1, \eta^2\}$ with 
\begin{align*}
     \P_{(n,0)}\left( \left(E_{\eta^1} \cup E_{\eta^2}\right)^c\right ) & = 1 - \frac{n+1}{n+3}-\frac{1}{n+3} \cdot \frac{2n+4}{2n+9} \cdot \frac{n+1}{n+3} \\
    & = \frac{2}{n+3} \left( 1 - \frac{n+2}{2n+9} \cdot \frac{n+1}{n+3}\right) \leq \frac{2}{n}.
\end{align*}
Equation \eqref{eq:assump_1st} in Assumption \ref{assump} is verified with $\theta_1 = 1$, because 
\[
   \P_{(n,0)}\left( \bigcup_{\eta \in \mathcal{T}^{(1)}} E_{\eta} \right ) =   1 - \P_{(n,0)}\left( \left(E_{\eta^1} \cup E_{\eta^2}\right)^c\right )  \geq 1 - \frac{2}{n}, 
\]
with $\theta_1 = 1$. Moreover, 
\begin{align}
    \P_{(n,0)}\left( \left(E_{\eta^1} \cup E_{\eta^2} \cup E_{\eta^3}\right)^c\right ) & = 1 - \frac{n+1}{n+3}-\frac{1}{n+3} \cdot \frac{2n+4}{2n+9} \cdot \frac{n+1}{n+3}  - \frac{1}{n+3} \cdot \frac{2n+2}{2n+7} \cdot \frac{n+1}{n+3}  \notag \\
    & = \frac{2}{n+3} \left( 1 - \frac{n+2}{2n+9} \cdot \frac{n+1}{n+3} - \frac{n+1}{2n+7} \cdot \frac{n+1}{n+3} \right)  \notag \\
    & = \frac{2}{(n+3)^2}\cdot \frac{2n^2 + 27n + 60}{(2n+9)(2n+7)}
    \leq \frac{2}{n^2}. \notag
\end{align}
Hence, if $\mathcal{T}^{(2)} = \{\eta^3\}$, then
\begin{align*}
    \P_{(n,0)}\left( \bigcup_{\eta \in \mathcal{T}^{(1)} \cup \mathcal{T}^{(2)}}   E_{\eta} \right )=  1 - \P_{(n,0)}\left( \left(E_{\eta^1} \cup E_{\eta^2} \cup E_{\eta^3}\right)^c\right )   \geq 1 - \frac{2}{n^2},
\end{align*}
which verifies equation \eqref{eq:assump_3rd}  with $\theta_2 = 2$ in Assumption \ref{assump}.

Next, we verify  Assumptions \ref{assump2} and  \ref{A:Positiveecurrent}.
Note that any trajectory originated on the axis $\{x_2 = 0\}$ can only leave after the occurrence of reaction $\emptyset \rightarrow A + B$, which takes a unit exponential time, hence Assumption \ref{assump2} is  satisfied with $\kappa=1$.  The reaction network also meets the assumptions in Theorem \ref{thm:def0}, hence there is a unique stationary distribution which verifies Assumption \ref{A:Positiveecurrent}. 

In conclusion, the continuous-time Markov process $X$ of the reaction network \eqref{model22} satisfies Assumption \ref{assump}, Assumption \ref{assump2} and Assumption \ref{A:Positiveecurrent}. Hence Corollary \ref{cor:mixing} applies  and the mixing time of $X$ is at least of order $n^2$ when the process starts at $(n,0)$, as $n\rightarrow \infty$.

\textbf{Simulation of stochastic model in \eqref{model22}.} Stochastic simulations of model \eqref{model22} are plotted in Figure \ref{fig:2}. In particular, boundary-induced slow mixing, described in Remark \ref{rmk:slowmixing}, can be observed in Figure \ref{fig:toy2_traj}, where the process swiftly approach the boundary, stays in close proximity to the boundary for a long time and eventually move towards the bulk of the state space. Note that such behavior can only be observed when there are little B species present as in the initial condition. Figure \ref{fig:xd} presents log-log plot of mean first passage time and mixing time, respectively, against initial conditions $(n,0)$ for $n$ between 100 and 3000 (assuming no B species initially). 
Figure \ref{fig:xd} demonstrates that when the process begins at the initial condition $(n,0)$, both the mean first passage time and mixing time are of order $n^2$ as the slopes of the straight lines are nearly $2$. Furthermore, the matching slopes confirms that our lower bound from Corollary \ref{cor:mixing} and upper bound from Proposition \ref{prop:mean first hitting} are sharp for the model in \eqref{model22}. 

\begin{figure}[!htbp]
\begin{subfigure}{0.45\textwidth}
\includegraphics[width=\linewidth, height=5cm]{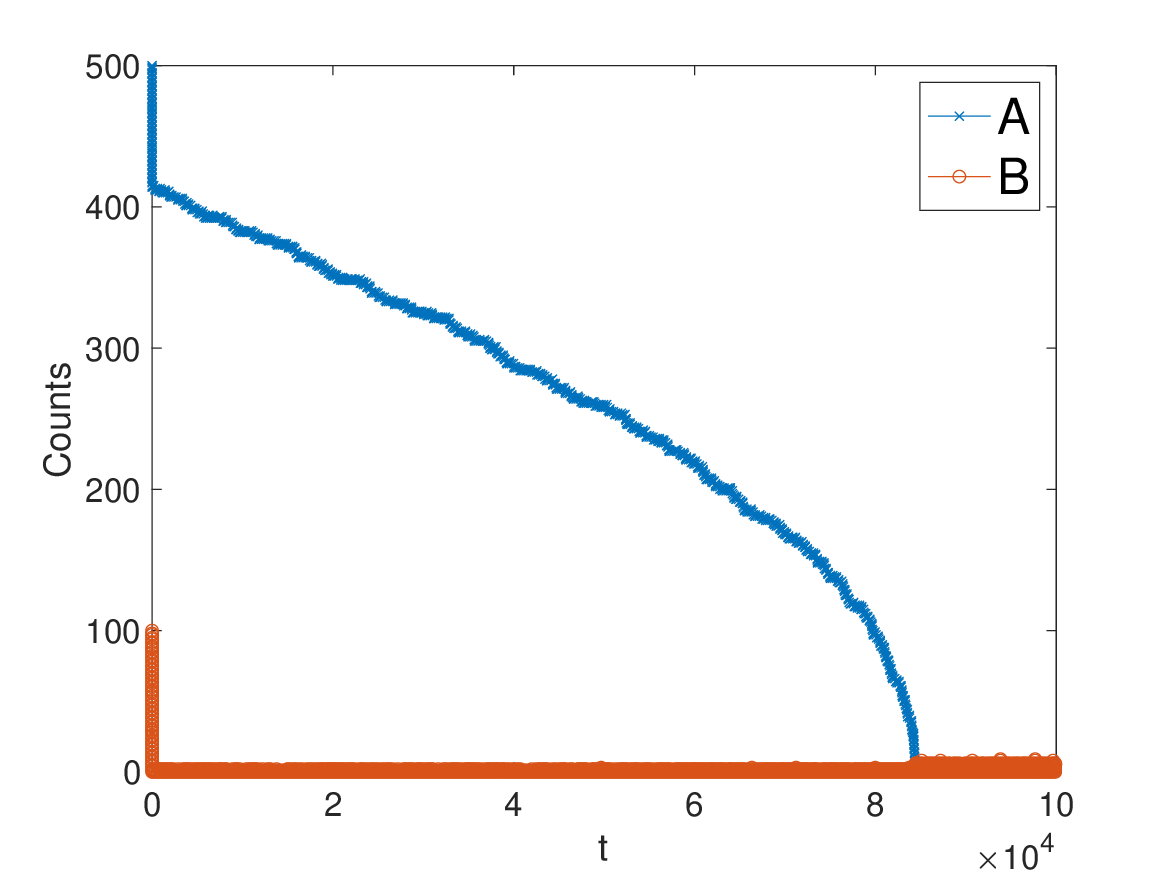}
\caption{Sample trajectory starting at $(500, 100)$}
\label{fig:toy2_traj}
\end{subfigure}
\begin{subfigure}{0.45\textwidth}
\includegraphics[width=\linewidth, height=5cm]{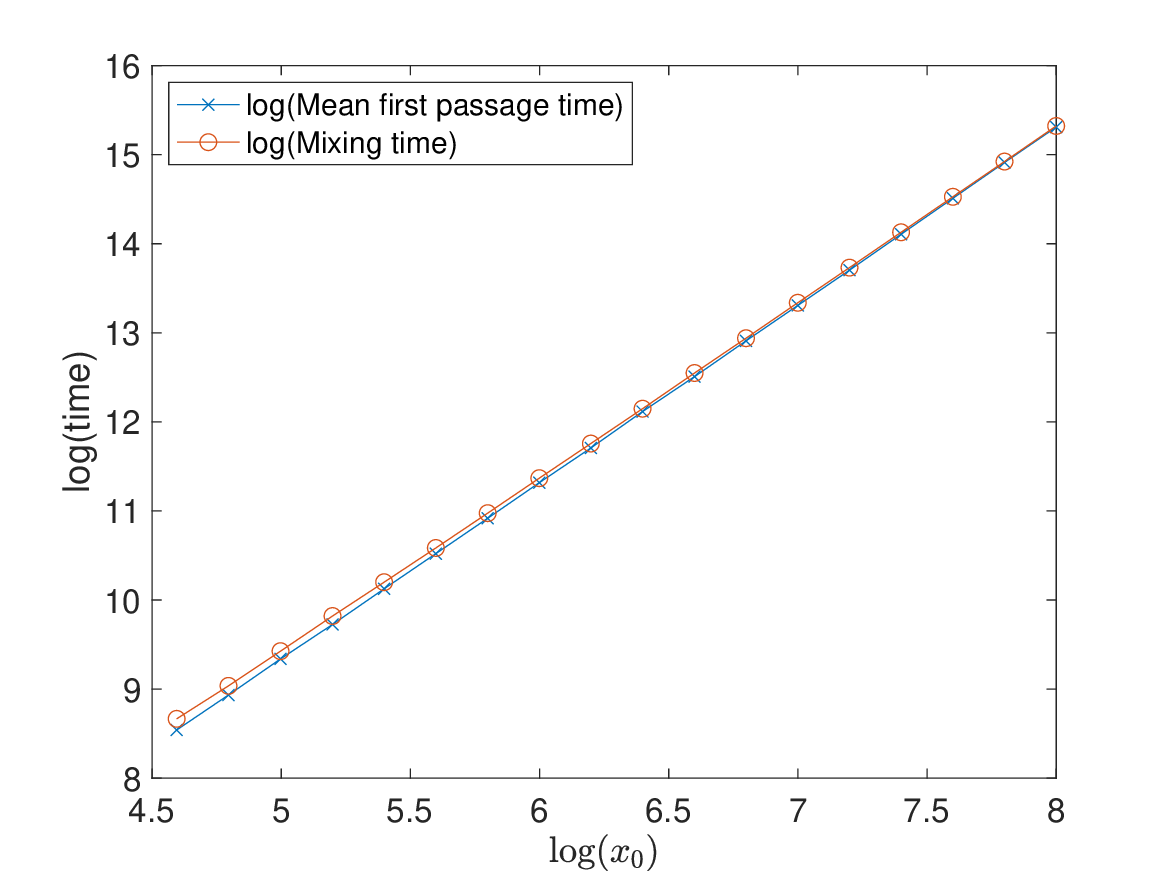}
\caption{Comparison of mean first passage time and mixing time.}
\label{fig:xd}
\end{subfigure}
\caption{Stochastic simulations for the continuous-time Markov process for the toy model in \eqref{model22}. Sample trajectory is plotted in Figure \ref{fig:toy2_traj} for initial condition  $x_0 = (500,100)$. In Figure \ref{fig:xd}, mean first passage time and mixing time, averaged over 100 trajectories, is plotted against initial condition $x_0 = (n,0)$ in the log-log scale.}
\label{fig:2}
\end{figure}

\end{example}

\begin{example}\label{ex:toy example 2}
For a positive integer $\alpha$, consider a family of continuous-time Markov chains, indexed by $\alpha$, given by the reaction network
    \begin{align}\label{eq:example network}
        \begin{split}
        &\emptyset \xrightarrow{\qquad \kappa_1 \qquad }  \alpha A+B \\
        &\nwarrow{ \kappa_3 } \qquad \quad \swarrow{ \kappa_2 }\\
        & \quad (2\alpha -1)A+2B
    \end{split}
    \end{align}
\end{example}
Three reactions are involved in this network, whose reaction vectors are denoted by 

\[
    \eta_1 = \begin{pmatrix}
    \alpha \\ 1
    \end{pmatrix},\qquad  \eta_2 = \begin{pmatrix}
    \alpha-1 \\ 1
    \end{pmatrix},\qquad  \eta_3 = \begin{pmatrix}
    -2\alpha+1 \\ -2
    \end{pmatrix}.
\]
Note this network is a special case of the reaction network in \eqref{eq:cyclicmodel} that satisfies Assumption \ref{assump:cyclic} with $\alpha_1 = \alpha$, $\alpha_2 = 2\alpha-1$,  $\beta_1 = 1$, and $\beta_2 = 2$.
 By Theorem \ref{prop:cycle2},  the conclusions of Theorem \ref{thm}, Corollary \ref{cor:mixing} and Corollary \ref{Cor:Thm} all hold with $\theta = 1 + \theta_1 =  1+\alpha-1 = \alpha$, regardless the value of $\alpha$, and the mixing time of $X$ is at least of order $n^\alpha$ when the process starts at $(n,0)$, as $n\rightarrow \infty$.

\textbf{Simulation of stochastic model in \eqref{eq:example network} with $\alpha = 2$.} In the following simulations, we assume all reaction rate constant $\kappa_i$ to be 1, hence the stationary distribution is given by Theorem \ref{thm:def0} with $c = (1,1)$. 

\begin{figure}[!htbp]
\centering
\begin{subfigure}{0.45\textwidth}
\includegraphics[width=\linewidth, height=5cm]{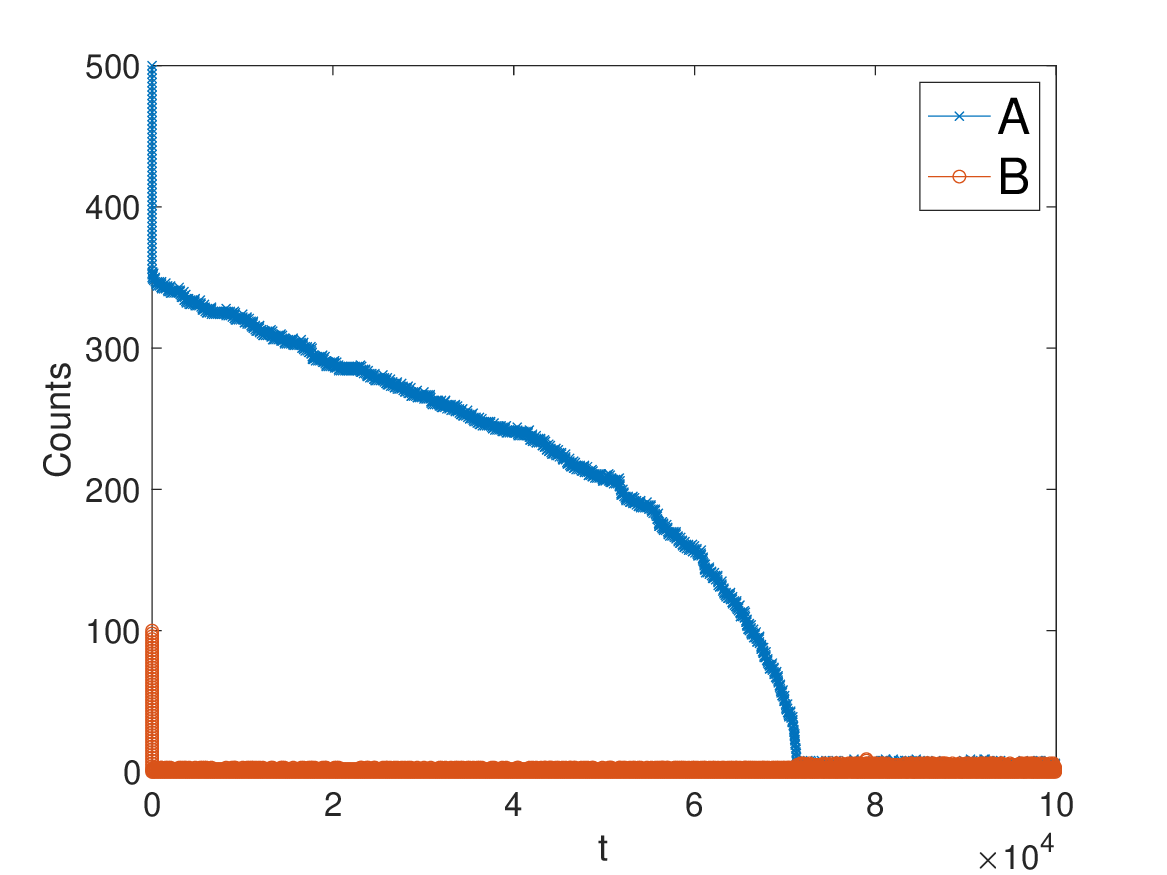}
\caption{Sample trajectory starting at $(500, 100)$}
\label{fig:toy1_traj_1}
\end{subfigure}
\begin{subfigure}{0.45\textwidth}
\includegraphics[width=\linewidth, height=5cm]{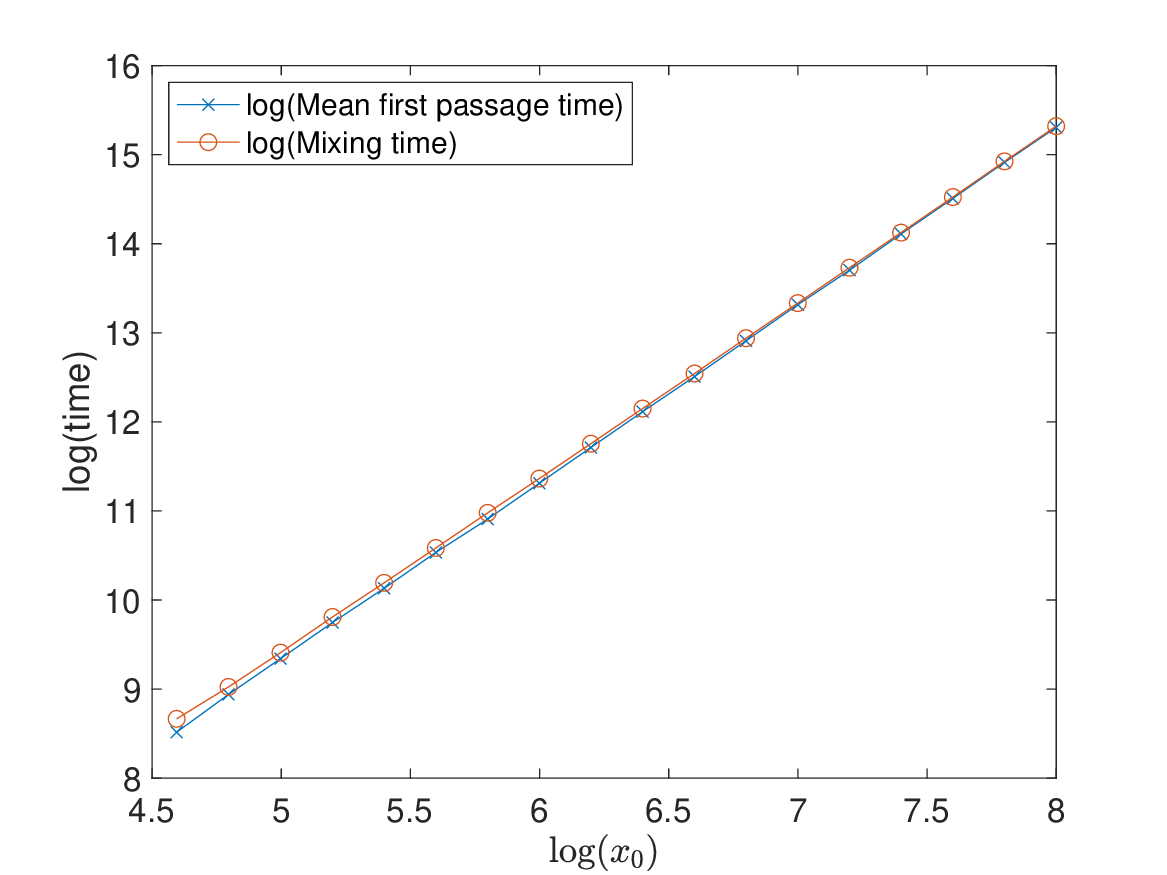}
\caption{Mean first passage time against initial position}
\label{fig:toy1_fpt_mt}
\end{subfigure}
\caption{Stochastic simulations of the continuous-time Markov process for the model in \eqref{eq:example network} with $\alpha =2$. In Figure \ref{fig:toy1_traj_1}, sample trajectories are plotted for the initial condition  $x_0 = (500,100)$. In Figure \ref{fig:toy1_fpt_mt}, the mean first passage times and mixing times, averaged over 100 trajectories, are plotted against initial condition $x_0 = (n,0)$ in the log-log scale.}
\label{fig:1}
\end{figure}

Stochastic simulations of model \eqref{eq:example network} with $\alpha =2$ are plotted in Figure \ref{fig:1}. In Figure \ref{fig:toy1_traj_1}, sample trajectory is plotted with the initial condition $x_0 = (500, 100)$. In particular, boundary-induced slow mixing, described in Remark \ref{rmk:slowmixing}, can be observed in Figure \ref{fig:toy2_traj} and the process swiftly approaches the boundary, stays in close proximity to the boundary for a long time. Figure \ref{fig:toy1_fpt_mt} show log-log plot of the mean first passage times and the mixing times, respectively, against initial conditions $(n,0)$ for $n$ between 100 and 3000 (assuming no B species initially).  Figure \ref{fig:toy1_fpt_mt} demonstrate that when the process begins at the initial condition $(n,0)$, both the mean first passage time and mixing time are of order $n^2$ as the slopes of the corresponding straight lines are nearly $2$. Furthermore, the matching slopes confirms that our lower bound from Corollary \ref{cor:mixing} and upper bound from Proposition \ref{prop:mean first hitting} are sharp for the model in \eqref{eq:example network}.




\FloatBarrier

\section{Proof of Theorem \ref{thm}}\label{sec:proof 1}

We give the proof of Theorem \ref{thm} in this section. 
Fix $\delta \in (0,1)$, it suffices to find a constant $C\in (0,\infty)$ and $N_{\delta}\in (0,\infty)$, such that for any $n\geq N_{\delta}$ and any initial state $x\in \mathbb{I}_n$, 
\begin{align}\label{eq:thm1_also}
  \sup_{t\in[0,\,Cn^{\theta}]}  \P_{x}\left(\|X(t)\|_{d-1,\infty}\leq \frac{n}{2}  \right)\leq 1-\delta. 
\end{align}
To prove \eqref{eq:thm1_also},
we denote by 
\begin{equation}\label{Def:Zi}
Z(i):=
\Big(X_1(\nu_i),X_2(\nu_i),\cdots, X_{d-1}(\nu_i) \Big)\in \Z_{\ge 0}^{d-1},
\end{equation}
the location of the $i$ th visit to the boundary $\{x_d=0\}$, where $\nu_i$'s are the $i$ th time for visiting the axis defined as \eqref{eq:StoppingDef}.
Note that $\{Z(i)\}_{i\in\Z_{\ge 0}}$ is itself a discrete-time Markov chain on $\Z^{d-1}_{\ge 0}$. 
The first passage time for the reduced process,
\begin{equation}\label{Def:tauZ}
    \tau_{k}^Z := \inf \{ i\in \Z_{\ge 0}: \, \|Z(i)\|_{\infty} \leq k\}= \inf \left\{ i\in \Z_{\ge 0}: \, \max_{1\leq j\leq d-1}X_j(\nu_i)\, \leq\, k\right\},
\end{equation}
the first time-step for  $Z$ to be less than or equal to $k$,
will play a key role in our proof of \eqref{eq:thm1_also}.

\medskip

The proof of \eqref{eq:thm1_also} begins as follows. Let $N(t)$ be the total number of times that the $d$ th coordinate of $X(t)$ becomes $0$ during $[0,t]$, that is, 
\begin{align}\label{eq:CountDef}
    N(t)  \coloneqq \max \{i\in \Z_{\ge 0} : \nu_i \leq t \}. 
\end{align}
For any  constants $\theta,\,b\in (0,\infty)$, $c_*>-n/2$ and any $x\in \mathbb{I}_n$, 
\begin{align}
&\P_{x}\left(\|X(t)\|_{d-1,\infty}\leq \frac{n}{2}  \right) \notag \\
= & \,\P_{x}\left(\|X(t)\|_{d-1,\infty}\leq \frac{n}{2}, N(t) \geq \lfloor bn^{\theta} \rfloor \right)\,+\,\P_{x}\left(\|X(t)\|_{d-1,\infty}\leq \frac{n}{2}, N(t) < \lfloor bn^{\theta} \rfloor \right) \notag\\
\leq &\,\P_{x}\left( N(t) \geq \lfloor bn^{\theta} \rfloor\right)\,+\,\P_{x}\left( \|X(t)\|_{d-1,\infty}\leq \frac{n}{2},\, N(t) < \lfloor bn^{\theta} \rfloor, \,\tau^Z_{n/2\,+c_* } \leq \lfloor bn^{\theta} \rfloor   \right) \notag \\
 & \qquad \qquad\qquad \qquad\qquad \quad +\, \P_{x}\left( \|X(t)\|_{d-1,\infty}\leq \frac{n}{2},\, N(t) < \lfloor bn^{\theta} \rfloor,\, \tau^Z_{n/2\,+c_* } > \lfloor bn^{\theta} \rfloor  \right) \notag \\
\leq &\,\underbrace{\P_{x}\left(  N(t) \geq \lfloor bn^{\theta} \rfloor\right) }_{\text{many visits to the boundary}}\,+\, \underbrace{\P_{x}\left( \tau^Z_{n/2\,+c_* } \leq \lfloor bn^{\theta} \rfloor  \right)}_{\text{first passage time estimate}}\label{eq:thm1_also_0} 
 \\
 & \qquad \qquad\qquad \qquad\qquad \quad +\, \underbrace{\P_{x}\left( \|X(t)\|_{d-1,\infty}\leq \frac{n}{2},\, N(t) < \lfloor bn^{\theta} \rfloor , \, \tau^Z_{n/2\,+c_* } > \lfloor bn^{\theta} \rfloor \right)}_{\text{Z stayed above $n/2+c_*$ but $\Vert X(t) \Vert_{d-1,\infty}$ doesn't}}.
\label{eq:thm1_also_general}
\end{align}
For the rest of the proof, we choose $b$ and $c_*$ suitably and bound each of the three terms in \eqref{eq:thm1_also_0} and \eqref{eq:thm1_also_general}.
We shall take $b$ small enough (according to the sentence after \eqref{eq:thm1_also_2_general_2}), and we shall take $c_*$ to be any constant  strictly larger than 
\begin{align}\label{para:pathconstant}
\max_{\eta \in \mathcal{T}^{(1)} \cup \mathcal{T}^{(2)}} \,\max_{1\leq j\leq |\eta|} \left\| \sum_{i=1}^{j} \eta_i \right\|_{d-1,\infty},
\end{align}
the maximum change among the first $d-1$ coordinates of the Markov chain  along the path $\gamma_{\eta}$ defined in \eqref{Def:gammaeta}, among all $\eta\in \mathcal{T}^{(1)} \cup \mathcal{T}^{(2)}$.

\begin{remark}\rm
The assumption of $\mathcal{T}^{(2)}$ being finite can be relaxed. As long as the number defined in \eqref{para:pathconstant} is finite,  $\mathcal{T}^{(2)}$ can be an infinite subset and all our proofs and results in this paper still hold.
\end{remark}

\subsection{Immediate consequences of  assumptions \ref{assump}-\ref{assump2}}\label{sec:assumptions}

First, we give some immediate consequences of  assumptions \ref{assump}-\ref{assump2}. Assumption \ref{assump2} enables us to control $N(t)$ in \eqref{eq:CountDef}, which gives the following bound for the first term on the right of \eqref{eq:thm1_also_0}.

\begin{lemma}[Number of visits to the boundary]\label{lm:unit_Poisson_bdd}

Suppose  Assumption \ref{assump2} holds.
For any constants $\theta,\,b\in(0,\infty)$, $t\in\R_{\ge 0}$ and $n\in\mathbb{N}$, and 
any initial state $x\in \mathbb{I}_n$, 
\begin{align}\label{eq:uppbdd_1}
\P_{x}\left( N(t) \geq \lfloor bn^{\theta} \rfloor\right)\leq \frac{\kappa t}{\lfloor bn^{\theta} \rfloor}.
\end{align}
\end{lemma}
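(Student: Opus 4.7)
The plan is to reduce the counting random variable $N(t)$ to the $k$-th arrival time of a Poisson process with rate $\kappa$, and then apply Markov's inequality.

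First, I would rewrite the event in terms of the return times: by the definition of $N(t)$ in \eqref{eq:CountDef}, we have the identity
\begin{equation*}
\{N(t) \geq k\} = \{\nu_k \leq t\},
\end{equation*}
for any $k\in\mathbb{N}$. Therefore it suffices to bound $\P_x(\nu_k \leq t)$ from above with $k=\lfloor bn^{\theta}\rfloor$.

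Second, I would construct a coupling so that $\nu_k$ stochastically dominates a sum of i.i.d.\ exponentials. For $i\ge 1$, note that $\nu_{i-1}$ is an almost surely finite stopping time (by Assumption \ref{assump2}) and $X(\nu_{i-1})$ lies on the boundary face $\{x_d=0\}\cap\Z_{\ge 0}^d$. By the strong Markov property at $\nu_{i-1}$ and Assumption \ref{assump2}, the conditional law of the increment $\nu_i-\nu_{i-1}$ given $\mathcal{F}_{\nu_{i-1}}$ stochastically dominates an $\mathrm{Exp}(\kappa)$ random variable. Iterating this, one can enlarge the probability space to support a sequence $(E_i)_{i\ge 1}$ of i.i.d.\ $\mathrm{Exp}(\kappa)$ variables for which
\begin{equation*}
\nu_k \;\geq\; S_k:=\sum_{i=1}^{k} E_i \qquad \P_x\text{-a.s.}
\end{equation*}

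Third, I would convert to a Poisson tail. Let $M$ be a homogeneous Poisson process with intensity $\kappa$ whose arrival times are exactly $(S_k)_{k\geq 1}$. Since $\{\nu_k\leq t\}\subseteq \{S_k\leq t\}=\{M(t)\geq k\}$, we obtain
\begin{equation*}
\P_x\!\left( N(t)\geq k\right)
=\P_x(\nu_k\le t)
\;\leq\;\P(M(t)\geq k)
\;\leq\;\frac{\E[M(t)]}{k}=\frac{\kappa t}{k},
\end{equation*}
where the last inequality is Markov's inequality. Plugging in $k=\lfloor bn^{\theta}\rfloor$ yields \eqref{eq:uppbdd_1}.

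I do not anticipate a serious obstacle here; the only subtle point is the careful justification of the stochastic lower bound on the increments $\nu_i-\nu_{i-1}$ via the strong Markov property, and the standard fact that a sequence of such conditional stochastic lower bounds can be realized on a common probability space by an i.i.d.\ $\mathrm{Exp}(\kappa)$ sequence.
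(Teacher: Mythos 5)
Your proposal is correct and uses essentially the same ingredients as the paper's proof: the strong Markov property together with Assumption \ref{assump2} to stochastically dominate the counting process $N$ by a rate-$\kappa$ Poisson process, followed by Markov's inequality. The paper applies Markov's inequality directly to $N(t)$ and then bounds $\E_x[N(t)]$ by the Poisson mean, whereas you apply it to the dominating Poisson count after rewriting the event as $\{\nu_k\le t\}$; this is only a cosmetic difference.
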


\begin{proof}
The return times are, by definition $\{\nu_i-\nu_{i-1}\}_{i\in\mathbb{N}}$.
Assumption \ref{assump2} and the strong Markov property of $X$ implies that the process $N$ is stochastically dominated above by  a Poisson process $Y$ with rate $\kappa$.
Therefore,
\begin{align*}
\P_{x}\left(N(t)\geq \lfloor bn^{\theta} \rfloor \right) \leq &\, \frac{\E_{x}\left[N(t)\right]}{\lfloor bn^{\theta} \rfloor}  \leq \frac{ \E_{x}\left[Y(t)\right]}{\lfloor bn^{\theta} \rfloor} = \frac{\kappa t}{\lfloor bn^{\theta} \rfloor}.
\end{align*}
\end{proof}

\begin{remark}[Holding time versus return time]\rm
Note that the return time $\nu_i-\nu_{i-1}$ is equal to $(\nu_i-\mu_i)+(\mu_i-\nu_{i-1})$, where $\mu_i-\nu_{i-1}$ can be read as a holding time at the boundary $\{x_d=0\}$. For our applications introduced in Section \ref{sec:class of reaction networks}, we verify Assumption \ref{assump2} by using the fact that the holding time $\mu_i-\nu_{i-1}$ follows the same exponential distribution at any state at the boundary.
\end{remark}

Assumption \ref{assump}(ii) gives  control to the transition probabilities of the reduced, $d-1$-dimensional process $Z$ defined in \eqref{Def:Zi}. 
For $\eta\in \mathcal{T}^{(2)}$,  the endpoint of the path $\gamma_{\eta}$ is $\sum_{i=1}^{|\eta|}\eta_i=(x^{\eta}_1,x^{\eta}_2,\ldots,x^{\eta}_{d-1},\;0)$ for
a unique element $(x^{\eta}_1,x^{\eta}_2,\ldots,x^{\eta}_{d-1})\in \Z^{d-1}\backslash \{ {\bf 0}\}$. 
Let $J^{(2)}\subset \Z^{d-1}\backslash \{ {\bf 0}\}$ be the set of endpoint coordinates of paths in $\mathcal{T}^{(2)}$; that is,
\begin{equation}\label{Def:J2}
    J^{(2)} := \left\{(x^{\eta}_1,x^{\eta}_2,\ldots,x^{\eta}_{d-1})\in \Z^{d-1}\backslash \{ {\bf 0}\}:\, (x^{\eta}_1,x^{\eta}_2,\ldots,x^{\eta}_{d-1},\,0)=\sum_{i=1}^{|\eta|}\eta_i \text{ for some } \eta \in \mathcal{T}^{(2)} \right\}.
\end{equation}

\begin{lemma}[Transition probabilities for reduced lazy walk]\label{L:2Dto1D}
Suppose Assumption \ref{assump}  holds and $\nu_i<\infty$ for all $i\in\Z_{\ge 0}$ almost surely under $\P_x$ for all $x\in \{x_d=0\}\cap \Z_{\ge 0}^d$.
The process $\{Z(i)\}_{i\in \Z_{\ge 0}}$ defined in \eqref{Def:Zi} is a  discrete-time Markov chain on  $\Z^{d-1}_{\ge 0}$ whose transition probabilities, denoted by $p_{a_1,a_2} := \P \left( Z(1) = a_2\,|\, Z(0) = a_1\right)$, satisfy 
\begin{align}\label{eq:reduction_assumption_Lazy}
p_{a,a} \geq 1 - \dfrac{c_1}{\|a\|_{\infty}^{\theta_1}} 
\end{align} 
and
\begin{align}\label{eq:reduction_assumption}
\sum_{j'\not \in J^{(2)} \cup  \{{\bf 0}\} } p_{a,a+j'} \leq \frac{c_2}{\|a\|_{\infty}^{\theta_2}} 
\end{align} 
for all $a \in \Z_{\ge 0}^{d-1}$ such that  $\|a\|_{\infty} \geq N_0$, where $N_0$ is as in Assumption \ref{assump}. 
\end{lemma}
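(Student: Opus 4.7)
The plan is to observe that all three claims flow almost immediately from Assumption \ref{assump} once we set up the right correspondence between excursions of $X$ and jumps of the reduced chain $Z$.

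First, I would verify that $\{Z(i)\}_{i\ge 0}$ is a discrete-time Markov chain on $\Z_{\ge 0}^{d-1}$. Since the hypothesis gives $\nu_i < \infty$ a.s.\ for every starting point on the boundary face, each $Z(i)$ is well-defined. Because $\nu_i$ is a stopping time for $X$ and $X(\nu_i) = (Z(i),0)$, the strong Markov property for $X$ (combined with Assumption \ref{assump2}, which ensures the post-$\nu_i$ process is again an $X$-process started from $(Z(i),0)$) transfers to $Z$ the ordinary Markov property, with one-step transition probabilities $p_{a_1,a_2} = \P_{(a_1,0)}(X(\nu_1)_{1:d-1} = a_2)$.

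Second, for the laziness bound \eqref{eq:reduction_assumption_Lazy}, I fix $a \in \Z_{\ge 0}^{d-1}$ with $n := \|a\|_\infty \ge N_0$, so that $(a,0)\in \mathbb{I}_n$. For any $\eta \in \mathcal{T}^{(1)}$, the path $\gamma_\eta$ is an excursion from the boundary face: it leaves $\{x_d=0\}$ on the first step, stays off the boundary until the last step, and returns to the starting point because $\sum_i\eta_i=\mathbf 0$. Hence on $E_\eta$, the first return time $\nu_1$ equals the $|\eta|$-th jump time of $X$, with $X(\nu_1) = (a,0)$ and so $Z(1) = a$. Summing over $\eta\in\mathcal{T}^{(1)}$ and applying \eqref{eq:assump_1st} gives $p_{a,a} \ge \P_{(a,0)}\!\left(\bigcup_{\eta\in\mathcal{T}^{(1)}} E_\eta\right) \ge 1 - c_1/n^{\theta_1}$.

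Third, for the rare-excursion bound \eqref{eq:reduction_assumption}, the same excursion-identification argument applies to $\eta\in\mathcal{T}^{(2)}$: $\gamma_\eta$ leaves the boundary at step $1$, stays off it until step $|\eta|$, and returns to the boundary at $\sum_i\eta_i = (x_1^\eta,\ldots,x_{d-1}^\eta,0)$ with $(x_1^\eta,\ldots,x_{d-1}^\eta)\in J^{(2)}$. Thus on $E_\eta$ we get $Z(1) - a \in J^{(2)}$, whereas on $E_\eta$ for $\eta\in\mathcal{T}^{(1)}$ we get $Z(1)-a = \mathbf 0$. Consequently $\left\{Z(1)-a \in J^{(2)}\cup\{\mathbf 0\}\right\}$ contains $\bigcup_{\eta\in\mathcal{T}^{(1)}\cup\mathcal{T}^{(2)}} E_\eta$, and \eqref{eq:assump_3rd} yields
\[
\sum_{j'\notin J^{(2)}\cup\{\mathbf 0\}} p_{a,a+j'} = \P_{(a,0)}\!\left(Z(1)-a\notin J^{(2)}\cup\{\mathbf 0\}\right) \le \frac{c_2}{n^{\theta_2}}.
\]

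The only genuine subtlety, which I expect to be the main (small) obstacle, is the implicit convention that paths $\gamma_\eta$ for $\eta\in\mathcal{T}^{(1)}\cup\mathcal{T}^{(2)}$ do not touch the boundary face between their first and last step; without this, one could only conclude that $Z(1)$ lies in some intermediate boundary point of $\gamma_\eta$ rather than at the endpoint. This convention is consistent with the word ``excursion'' in the paragraph following Assumption \ref{assump} and with the explicit $\mathcal{T}^{(1)},\mathcal{T}^{(2)}$ used in the applications (Remark \ref{rem:simple cyclic} and Example \ref{ex: toy example}), and it should be stated once at the outset of the proof.
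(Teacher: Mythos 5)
Your proposal is correct and follows essentially the same route as the paper, whose entire proof consists of the two set inclusions $\bigcup_{\eta\in\mathcal{T}^{(1)}}E_\eta\subset\{Z(1)-Z(0)=\mathbf 0\}$ and $\{Z(1)-Z(0)\notin J^{(2)}\cup\{\mathbf 0\}\}\subset\bigl(\bigcup_{\eta\in\mathcal{T}^{(1)}\cup\mathcal{T}^{(2)}}E_\eta\bigr)^c$ combined with \eqref{eq:assump_1st} and \eqref{eq:assump_3rd}; you simply spell these out and, usefully, make explicit the implicit convention that the paths $\gamma_\eta$ do not revisit the boundary face before their final step, which the paper leaves unstated.
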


\begin{proof}
This follows directly from Assumption \ref{assump} because under such $\P_{(a,0)}$, we have 
\begin{align}
    \{Z(1)-Z(0)={\bf 0}\} & \supset   \bigcup_{\eta \in \mathcal{T}^{(1)}} E_{\eta} \qquad \text{and}  \\
   \{Z(1)-Z(0)\not \in  J^{(2)}\cup \{ {\bf 0}\}\} & \subset   \left( \bigcup_{\eta \in \mathcal{T}^{(1)} \cup \mathcal{T}^{(2)}} E_{\eta} \right)^c.
\end{align}
\end{proof}





\subsection{Rare excursions from the boundary}\label{sec: excursion}

In this section, we bound the term on the right of \eqref{eq:thm1_also_general}, namely 
$$\P_{x}\left( \|X(t)\|_{d-1,\infty}\leq \frac{n}{2}, N(t) < \lfloor bn^{\theta} \rfloor ,   
\tau^Z_{n/2\,+c_* } > \lfloor bn^{\theta} \rfloor
\right),$$
where $Z(i)$ is defined in \eqref{Def:Zi}, and  $c_*$ is any fixed constant strictly larger than 
the number defined in \eqref{para:pathconstant}. 

Our key observation is that the event in the above display is a \textit{rare} excursion  of $X$ from the boundary $\{x_d=0\}$. This event 
can \textit{not} be obtained by any trajectory $\gamma_{\eta}$ for any $\bigcup_{\eta \in \mathcal{T}^{(1)} \cup \mathcal{T}^{(2)}} E_{\eta}$ and we can bound the probabilities of these dominating excursions using Assumption \ref{assump}.

\begin{lemma}\label{lm:rare_excursion_general}
Suppose Assumption \ref{assump}  holds and $\nu_i<\infty$ for all $i\in\Z_{\ge 0}$ almost surely under $\P_x$ for all $x\in \{x_d=0\}\cap \Z_{\ge 0}^d$.
Let $c_*$ be a fixed constant that is strictly larger than the number  in \eqref{para:pathconstant}. 
For any constants $b\in (0,\infty)$ and $t\in\R_{\ge 0}$, and any initial state $x\in \mathbb{I}_n$ and  $\theta\in (0,\infty)$,
\begin{align}\label{eq:uppbdd_2}
   \P_{x}\left( \|X(t)\|_{d-1,\infty}\leq \frac{n}{2},\; N(t) < \lfloor bn^{\theta} \rfloor , \; \tau^Z_{n/2\,+c_* } > \lfloor bn^{\theta} \rfloor  \right) \leq b c_2 \, 2^{\theta_2}\,n^{\theta-\theta_2},
\end{align}
for all $n> 2(N_0+c_*)$, where $c_2$ is the constant in Assumption \ref{assump}(ii). 
\end{lemma}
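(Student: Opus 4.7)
The plan is to show that the event on the left side of \eqref{eq:uppbdd_2} forces at least one excursion of $X$ away from the boundary face $\{x_d=0\}$ during $[0,t]$ to deviate from every path in $\mathcal{T}^{(1)}\cup\mathcal{T}^{(2)}$, and then to bound the probability of this failure by invoking Assumption \ref{assump}(ii) through the strong Markov property at each $\nu_i$, followed by a union bound.

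First I would make two preparatory observations. On the event $\{\tau^Z_{n/2+c_*}>\lfloor bn^\theta\rfloor\}$, every boundary visit $Z(i)$ with $0\le i\le \lfloor bn^\theta\rfloor$ satisfies $\|Z(i)\|_\infty > n/2+c_*$, which exceeds $N_0$ under the hypothesis $n>2(N_0+c_*)$, so Assumption \ref{assump}(ii) may be applied at each such visit. Next, because $c_*$ was chosen strictly larger than the quantity in \eqref{para:pathconstant}, for every $\eta\in\mathcal{T}^{(1)}\cup\mathcal{T}^{(2)}$ and every prefix of $\gamma_\eta$ the displacement from the starting point has $\|\cdot\|_{d-1,\infty}$-norm strictly less than $c_*$. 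Now suppose, for contradiction, that each of the first $N(t)+1$ excursions (the $N(t)$ completed ones together with the partial one containing time $t$) followed some path in $\mathcal{T}^{(1)}\cup\mathcal{T}^{(2)}$. Writing $i=N(t)$, the position $X(t)$ then differs from $X(\nu_i)$ by strictly less than $c_*$ in $\|\cdot\|_{d-1,\infty}$, so picking $j^*$ with $Z_{j^*}(i)=\|Z(i)\|_\infty$ yields $X_{j^*}(t)> \|Z(i)\|_\infty - c_* > n/2$, contradicting $\|X(t)\|_{d-1,\infty}\le n/2$. Hence, on the event in \eqref{eq:uppbdd_2}, at least one among the first $\lfloor bn^\theta\rfloor$ excursions must fail to coincide with any path in $\mathcal{T}^{(1)}\cup\mathcal{T}^{(2)}$.

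It then remains to bound the probability of this failure event. By the strong Markov property of $X^D$ at each $\nu_{i-1}$ combined with Assumption \ref{assump}(ii), the conditional probability that the $i$-th excursion does not follow any path in $\mathcal{T}^{(1)}\cup\mathcal{T}^{(2)}$, given that $\|Z(i-1)\|_\infty > n/2+c_*$, is at most $c_2/\|Z(i-1)\|_\infty^{\theta_2} \le c_2/(n/2)^{\theta_2} = c_2\cdot 2^{\theta_2}/n^{\theta_2}$. A union bound over the first $\lfloor bn^\theta\rfloor\le bn^\theta$ excursions then produces the claimed upper bound $bc_2\cdot 2^{\theta_2}\cdot n^{\theta-\theta_2}$. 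I expect the main obstacle to be the careful bookkeeping of the partial excursion in progress at time $t$: one must verify that the prefix bound from \eqref{para:pathconstant} genuinely controls $\|X(t)-X(\nu_{N(t)})\|_{d-1,\infty}$, so that the contradiction argument goes through not only at completed boundary visits but also at the observation time $t$. Once this geometric statement is stated cleanly, the probabilistic step reduces to a routine strong-Markov union bound.
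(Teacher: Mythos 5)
Your proposal is correct and follows essentially the same route as the paper's proof: both arguments use the choice of $c_*$ (strictly exceeding the maximum prefix displacement in \eqref{para:pathconstant}) to force the excursion in progress at time $t$ to deviate from every path in $\mathcal{T}^{(1)}\cup\mathcal{T}^{(2)}$, and then bound this failure via the strong Markov property at the return times together with Assumption \ref{assump}(ii), summed over at most $\lfloor bn^{\theta}\rfloor$ excursions. The only cosmetic difference is that the paper decomposes over the disjoint events $\{\nu_j\le t<\nu_{j+1}\}$ rather than taking a union bound over excursion indices, which yields the identical estimate.
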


\begin{proof}
For any constant $b\in(0,\infty)$ and $t\in \R_{\ge 0}$, since 
$\nu_i<\infty$ for all $i\in\Z_{\ge 0}$ almost surely,
\[
\{N(t)<\lfloor bn^{\theta} \rfloor\}= \{\nu_{\lfloor bn^{\theta} \rfloor}>t\}=\bigcup_{j=0}^{\lfloor bn^{\theta} \rfloor -1}\{\nu_j \leq t<\nu_{j+1}\}.
\]
Also, for all $0\leq k<n$ and any $x\in \mathbb{I}_n$, 
\[
\left\{\tau^Z_{k } \leq \lfloor bn^{\theta} \rfloor\right\} \;=\; \left\{\min_{0\leq i\leq \lfloor bn^{\theta} \rfloor} \|Z(i)\|_{\infty}\leq k \right\}\quad \text{under }\P_{x}.
\]

Therefore, for any initial state $x\in \mathbb{I}_n$, 
\begin{align}
   &  \P_{x} \left( \|X(t)\|_{d-1,\infty}\leq \frac{n}{2},\; N(t) < \lfloor bn^{\theta} \rfloor, \;  
   \tau^Z_{n/2\,+c_* } > \lfloor bn^{\theta} \rfloor
   \right) \notag\\
  =   &  \sum_{j=0}^{\lfloor bn^{\theta} \rfloor -1} \P_{x} \left(  \|X(t)\|_{d-1,\infty}\leq \frac{n}{2},\;\nu_j \leq t < \nu_{j+1}, \;  \min_{i\leq \lfloor bn^{\theta} \rfloor} \|Z(i)\|_{\infty} > \frac{n}{2} + c_*  \right) \notag\\
  \leq   &  \sum_{j=0}^{\lfloor bn^{\theta} \rfloor -1} \P_{x} \left(  \|X(t)\|_{d-1,\infty}\leq \frac{n}{2},\; \nu_j \leq t < \nu_{j+1}, \|Z(j)\|_{\infty} >  \frac{n}{2} +c_* , \; \|Z(j+1)\|_{\infty}> \frac{n}{2} +c_*  \right) \notag\\
  \leq & \sum_{j=0}^{\lfloor bn^{\theta} \rfloor -1} \P_{x} \left( \min_{\nu_j^D \leq m < \nu_{j+1}^D } \|X^D(m)\|_{d-1,\infty}\leq \frac{n}{2},\; \|Z(j)\|_{\infty} >  \frac{n}{2} +c_* ,\; \|Z(j+1)\|_{\infty}> \frac{n}{2} +c_* \right) \label{E:rare_excursion}
\end{align}

For each $0\leq j\leq \lfloor bn^{\theta} \rfloor -1$, our choice of the constant $c_*$ gives
\begin{align}
&\,\P_{x} \left( \min_{\nu_j^D \leq m < \nu_{j+1}^D } \|X^D(m)\|_{d-1,\infty}\leq \frac{n}{2},\;\|Z(j)\|_{\infty} >  \frac{n}{2} +c_* ,\; \|Z(j+1)\|_{\infty}> \frac{n}{2} +c_* \right)\\
=&\,\E_{x} \left[ 
  1_{\{ \|Z(j)\|_{\infty} >  \frac{n}{2}+c_* \}}\,\P_{(Z(j),0)}\left(
  \min_{0 \leq m < \nu_{1}^D } \|X^D(m)\|_{d-1,\infty}\leq \frac{n}{2},\;\|Z(1)\|_{\infty} >  \frac{n}{2} +c_*
   \right) \right]\\
\leq &\,
\E_{x} \left[ 
  1_{\{ \|Z(j)\|_{\infty} >  \frac{n}{2}+c_* \}}\,\P_{(Z(j),0)}\left( \left(\bigcup_{\eta \in \mathcal{T}^{(1)} \cup \mathcal{T}^{(2)}} E_{\eta}\right)^c\,
   \right) \right],
\end{align}
where  the equality follows from the strong Markov property of $X^D$, and the last inequality follows from  our choice of the constant $c_*$. By Assumption \ref{assump}(ii), for all $\ell > n/2+c_*> N_0$ and any initial state $y\in \mathbb{I}_{\ell}$, 
\begin{align*}
    \P_{y}\left( \left( \bigcup_{\eta \in \mathcal{T}^{(1)} \cup \mathcal{T}^{(2)}} E_{\eta}\right)^c \,\right) \leq \frac{c_2}{\ell^{\theta_2}} \leq \frac{c_2 2^{\theta_2}}{n^{\theta_2}}.
\end{align*} 
The  right of \eqref{E:rare_excursion} is therefore bounded above by 
\[
    \lfloor bn^{\theta} \rfloor\, c_2 2^{\theta_2} n^{-\theta_2} \leq\, b c_2 \cdot 2^{\theta_2} n^{\theta-\theta_2}
\]
for all $n>2(N_0-c_*)$. Note that the above argument still holds if $\nu_{K+1}=+\infty$ for some $K\in\mathbb{N}$. The proof of \eqref{eq:uppbdd_2} is complete.
\end{proof}


\subsection{Hitting estimate for the $d-1$-dimensional reduced chain}\label{sec: reduction}

In this section, we will give an upper bound for the second term on the right of \eqref{eq:thm1_also_0}, the probability $\P_{x}\left( \tau_{n/2\,+c_*}^Z \leq \lfloor bn^{\theta} \rfloor  \right)$. For the sake of simplicity, we prove the case of $c_*=0$. For $c_*>0$, the proof and the conclusion are the same up to some constant multiplication.

We can consider this as a problem solely about the $(d-1)$ dimensional process $Z$. 
Lemma \ref{L:1Dim_alt} is the only place where we need the ``laziness" condition \eqref{eq:reduction_assumption_Lazy} in Assumption \ref{assump}. We will  also need \eqref{eq:reduction_assumption} in Assumption \ref{assump}. 
Let 
$$c^* := \max_{j\in J^{(2)}} \|j\|_{\infty}= \max_{j\in J^{(2)}} \|j\|_{d-1,\infty},$$
where $J^{(2)}\subset \Z^{d-1}\backslash \{ {\bf 0}\}$ is the set of endpoint coordinates of paths in $\mathcal{T}^{(2)}$  defined in \eqref{Def:J2}.

\begin{lemma}\label{L:1Dim_alt}
Suppose Assumption \ref{assump}  holds. Then  for any $\theta,\,b\in (0,\infty)$, $n>2N_0$ and any initial state $x\in \mathbb{I}_n$, 
\begin{align}\label{eq:prob_term2_general}
\P_{x}\left( \tau_{n/2}^Z \leq \lfloor bn^{\theta} \rfloor \right) \leq \, b\, c_2  2^{\theta_2}\, n ^{\theta-\theta_2}\;+\;\frac{b\,n\,c_1\,2^{\theta_1}}{\sqrt{\pi \, \lfloor n/(2c^*) \rfloor}}\,\left( b\,3e\,c^*\,2^{\theta_1}\,c_1\,\cdot n^{\theta-(1+\theta_1)} \right)^{\lfloor n/(2c^*) \rfloor}.
    \end{align}
\end{lemma}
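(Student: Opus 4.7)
Set $K := \lfloor bn^\theta \rfloor$. The natural way to split the event $\{\tau_{n/2}^Z \leq K\}$ is by whether a ``rare'' increment of $Z$ -- meaning one that lies outside $J^{(2)}\cup\{{\bf 0}\}$ -- occurs before $Z$ first reaches $\{\|\cdot\|_\infty\leq n/2\}$. The two summands on the right-hand side of \eqref{eq:prob_term2_general} correspond exactly to these two cases. For the rare-jump piece, as long as $\|Z(i)\|_\infty>n/2\geq N_0$, the inequality \eqref{eq:reduction_assumption} of Lemma \ref{L:2Dto1D} gives the conditional bound $c_2(n/2)^{-\theta_2}=c_2 2^{\theta_2}n^{-\theta_2}$ for the probability of a rare increment at step $i$. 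Unioning over the first $K$ steps via the strong Markov property of $\{Z(i)\}$ yields $K\cdot c_2 2^{\theta_2}/n^{\theta_2}\leq b c_2 2^{\theta_2}n^{\theta-\theta_2}$, the first summand.

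On the complementary event every increment before $\tau_{n/2}^Z$ lies in $J^{(2)}\cup\{{\bf 0}\}$ and hence has $\|\cdot\|_\infty$-norm at most $c^*$. The reverse triangle inequality therefore forces $Z$ to make at least $M:=\lfloor n/(2c^*)\rfloor$ non-stay transitions before $\|Z\|_\infty$ can drop from $n$ to $n/2$. Meanwhile \eqref{eq:reduction_assumption_Lazy} bounds the non-stay probability at each step with $\|Z(i)\|_\infty>n/2$ by $p:=c_1 2^{\theta_1}/n^{\theta_1}$, so a step-by-step strong Markov coupling stochastically dominates the number of non-stay transitions in $[0,\tau_{n/2}^Z\wedge K]$ by ${\rm Bin}(K,p)$. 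This contribution is thus at most $\P({\rm Bin}(K,p)\geq M)\leq \binom{K}{M}p^M$. Combining $\binom{K}{M}\leq K^M/M!$ with Stirling's lower bound $M!\geq\sqrt{2\pi M}(M/e)^M$ gives $\binom{K}{M}p^M\leq\frac{1}{\sqrt{2\pi M}}(eKp/M)^M$; substituting $K\leq bn^\theta$, $p=c_1 2^{\theta_1}/n^{\theta_1}$ and (for $n$ large) $1/M\leq 3c^*/n$ converts the base into $3ec^* bc_1 2^{\theta_1}n^{\theta-1-\theta_1}$, which is the geometric factor in \eqref{eq:prob_term2_general}. A light repackaging of the polynomial prefactor -- absorbing $1/\sqrt{2}$ and one copy of the base -- yields the stated $\frac{bnc_1 2^{\theta_1}}{\sqrt{\pi M}}$ term.

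The main obstacle is the stochastic-domination step in the no-rare-jump case: one must verify that restricting to ``no rare jump before $\tau_{n/2}^Z$'' does not break the Bernoulli$(p)$ coupling, and that the non-stay bound $p$ is in force at every step up to $\tau_{n/2}^Z\wedge K$. Both follow cleanly from the strong Markov property of $\{Z(i)\}$ together with the constraint $n/2>N_0$, which ensures that \eqref{eq:reduction_assumption_Lazy} is valid at each such step. Everything after the coupling reduces to a routine Stirling computation, and the whole argument takes place in the reduced $(d-1)$-dimensional chain $Z$ packaged by Lemma \ref{L:2Dto1D}, without any need to revisit the original process $X$.
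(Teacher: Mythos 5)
Your proof is correct and follows essentially the same route as the paper's: the same split according to whether an increment outside $J^{(2)}\cup\{\mathbf{0}\}$ occurs before $\tau_{n/2}^Z$, the same union bound via \eqref{eq:reduction_assumption} for the first term, and the same count of at least $\lfloor n/(2c^*)\rfloor$ non-stay jumps controlled by a binomial-type tail with \eqref{eq:reduction_assumption_Lazy} and Stirling for the second. The only (harmless) difference is organizational: the paper sums over the exact value $\tau_{n/2}^Z=r$ and retains an extra factor $\lfloor bn^\theta\rfloor\cdot c_1 2^{\theta_1}n^{-\theta_1}$ (together with one extra power of the jump probability), which is exactly where the stated prefactor $b\,n\,c_1\,2^{\theta_1}/\sqrt{\pi\lfloor n/(2c^*)\rfloor}$ comes from, whereas your binomial-domination bound produces the smaller prefactor $1/\sqrt{2\pi\lfloor n/(2c^*)\rfloor}$ and therefore implies the stated inequality once $b\,n\,c_1 2^{\theta_1}\geq 1/\sqrt{2}$, which holds in the large-$n$ regime where the lemma is applied.
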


\begin{proof}[Proof of lemma \ref{L:1Dim_alt}] 
We split the event $\{\tau_{n/2}^{Z} \leq \lfloor bn^{\theta} \rfloor\}$ according to whether 
all increments $\{Z(i)-Z(i-1)\}_{i=1}^{\tau_{n/2}^{Z}}$ belong to $J^{(2)}\cup \{ {\bf 0}\}$, or not. 
For any $n>2N_0$ and $x\in \mathbb{I}_n$,
\begin{align}
	&\P_{x}\left( \tau_{n/2}^Z \leq \lfloor bn^{\theta} \rfloor,\; Z(i) - Z(i-1) \not \in J^{(2)} \cup \{ {\bf 0}\}  \text{ for some } i\leq \tau_{n/2}^Z \right) \notag\\   
 & \leq \sum_{i=1}^{\lfloor bn^{\theta} \rfloor}  \P_{x}\left( i\leq \tau_{n/2}^Z \leq \lfloor bn^{\theta} \rfloor, Z(i) - Z(i-1)\not \in J^{(2)} \cup \{ {\bf 0}\}  \right)  \notag \\
 &\leq \sum_{i=1}^{\lfloor bn^{\theta} \rfloor} \P_{x} \left( Z(i)- Z(i-1)\not \in J^{(2)}\cup \{ {\bf 0}\} , \|Z(i-1)\|_{\infty}\geq \frac{n}{2} \right)  \notag\\
    &\leq  bn^{\theta}\,\frac{c_2\cdot 2^{\theta_2}}{n^{\theta_2}} \;=\; b\, c_2  2^{\theta_2}\, n ^{\theta-\theta_2}.  \label{E:reduction_S1}
\end{align}
The last inequality follows because for each $1\leq i\leq \lfloor bn^{\theta} \rfloor$ and $n/2>N_0$, 
\begin{align*}
& \P_{x} \left( Z(i) - Z(i-1)\not \in J^{(2)}\cup \{ {\bf 0}\} , \|Z(i-1)\|_{\infty}\geq \frac{n}{2} \right)\\
=&\,\E_{x} \left[ 1_{\{ \|Z(i-1)\|_{\infty}\geq \frac{n}{2}\}}\, \P_{Z(i-1)} \left(Z(1) \not \in J^{(2)}\cup \{ {\bf 0}\}  \right)\right] \\
\leq &\,\E_{x} \left[ 1_{\{ \|Z(i-1)\|_{\infty}\geq \frac{n}{2}\}}\,\frac{c_2}{\Vert Z(i-1) \Vert^{\theta_2}_\infty}\right] \qquad\text{by }\eqref{eq:reduction_assumption} \text{ which come from  Assumption \ref{assump}(ii)}\\
\leq &\,\frac{c_2\cdot 2^{\theta_2}}{n^{\theta_2}}.
\end{align*}

For the other scenario where all the increments in the process $Z$ belong to $J^{(2)}\cup \{ {\bf 0}\}$, we note that
\begin{align}
&\P_{x}\left( \tau_{n/2}^{Z} \leq  \lfloor bn^{\theta} \rfloor, \; Z(i) - Z(i-1)  \in J^{(2)} \cup \{ {\bf 0}\}  \text{ for all } i\leq \tau_{n/2}^Z  \right) \label{E:1Dim_b}\\ 
=&\, \sum_{r=\lfloor  \frac{n}{2c^*} \rfloor+1}^{\lfloor bn^{\theta} \rfloor}\P_{x}\left( \tau_{n/2}^{Z} =r, \; Z(i) - Z(i-1)  \in J^{(2)} \cup \{ {\bf 0}\}  \text{ for all } i\leq \tau_{n/2}^Z   \right) \label{E:1Dim_b 2}
\end{align}
where $\lfloor  \frac{n}{2c^*} \rfloor+1$ in \eqref{E:1Dim_b 2} is the minimum number of jumps required to reach the set $\{z:\|z\|_{\infty}\le \frac{n}{2}\}$ from $x$, and 
we write
\[
K:=\left \lfloor  \frac{n}{2c^*} \right \rfloor.
\]
Under $\P_x$ where $x\in \mathbb{I}_n$,  if $\tau_{n/2}^{Z} =r$ occurs in this scenario, then there must be at least $K$ increments of $Z$ that belong to $J^{(2)}$ not $\{0\}$ during the first $r-1$ steps, and the increment given by the $r$ th jump must be in $J^{(2)}$.  Therefore, for each $r$ between $K+1$ and $\lfloor bn^{\theta} \rfloor$,
\begin{align*}
&\,\P_{x}\left( \tau_{n/2}^{Z} =r, \; Z(i) - Z(i-1) \in J^{(2)} \cup \{ {\bf 0}\}  \text{ for all } i\leq \tau_{n/2}^Z   \right) \notag\\
\leq &\, \sum_{\substack{(i_1,i_2,\cdots, i_{K})\\ \subset \{1,2,...,r-1\}}} \,\P _{x}\left( Z(i_1) - Z(i_1-1)\in J^{(2)}, \cdots, Z(i_{K}) - Z(i_{K}-1)\in J^{(2)},\; Z(r) - Z(r-1) \in J^{(2)} \right) \\
\leq &\, \binom{r-1}{K} \left(  \frac{c_1}{(n/2)^{\theta_1}} \right)^{K+1}, \notag
\end{align*}
where in the  last inequality, we used the fact that all  jumps with size in $J^{(2)}$ have probability at most $\frac{c_1}{(n/2)^{\theta_1}}$, by \eqref{eq:reduction_assumption_Lazy}(which comes from \eqref{eq:assump_1st} in Assumption \ref{assump}(i)). 
Let $c_1':=2^{\theta_1}c_1$ for simplicity. 
Since $r\le \lfloor bn^\theta \rfloor$, the last display is at most 
\begin{align*}
\binom{\lfloor bn^{\theta} \rfloor }{K} \left(  \frac{c_1'}{n^{\theta_1}} \right)^{K+1}
= &\, \frac{\lfloor bn^{\theta} \rfloor !}{K!\,(\lfloor bn^{\theta} \rfloor-K)!} \,\left(  \frac{c_1'}{n^{\theta_1}} \right)^{K+1}\\
\leq  &\, \frac{\lfloor bn^{\theta} \rfloor ^{K}}{K!} \,\left(  \frac{c_1'}{n^{\theta_1}} \right)^{K+1}\\
\leq  &\, \frac{\lfloor bn^{\theta} \rfloor ^{K}}{\sqrt{\pi K} \,(\frac{K}{e})^K}\,\left(  \frac{c_1'}{n^{\theta_1}} \right)^{K+1} \qquad\qquad \text{by Stirling's approximation} \\
= &\,\left(  \frac{c_1'}{n^{\theta_1}} \right)\,\frac{1}{\sqrt{\pi K}}\,\Bigg( \frac{\lfloor bn^{\theta} \rfloor}{\frac{K}{e}}\, \frac{c_1'}{n^{\theta_1}} \Bigg)^{K} \\
\leq &\,\left(  \frac{c_1'}{n^{\theta_1}} \right)\,\frac{1}{\sqrt{\pi K}}\,\left( 3\,b\,c^*\,e\,c_1'\cdot n^{\theta-(1+\theta_1)} \right)^{K}.
\end{align*}

Putting this and \eqref{E:reduction_S1} into \eqref{E:1Dim_b}, we obtain that
\[
\sup_{x\in \mathbb{I}_n}\P_{x}\left( \tau_{n/2}^Z \leq \lfloor bn^{\theta} \rfloor \right) \leq \, b\, c_2  2^{\theta_2}\, n ^{\theta-\theta_2}\;+\; \lfloor bn^{\theta} \rfloor\,
\left(  \frac{c_1'}{n^{\theta_1}} \right)\,\frac{1}{\sqrt{\pi K}}\,\left( 3\,b\,c^*\,e\,c_1'\cdot n^{\theta-(1+\theta_1)} \right)^{K}
\]
which tends to $0$ as $n\to\infty$, provided that either (i) $\theta<1+\theta_1$ or (ii)
$\theta=1+\theta_1$ and
$3\,b\,c^*\,e\,c_1' \in(0,1)$.

\end{proof}

\subsection{Putting everything together}

For any $\theta\in(0,\infty)$, we consider the following assumption.
\begin{assumption}\label{A:theta}
There exist constants $b_0$ and $C_0\in(0,\infty)$ such that
\begin{equation}\label{A:tauZ}
\limsup_{n\to \infty}\sup_{x\in \mathbb{I}_n}\P_{x}\left( \tau_{n/2+c_*}^Z \leq \lfloor bn^{\theta} \rfloor \right) \leq \,b\,C_0 \quad\text{ for all } b\in(0,b_0),   
\end{equation} 
where $c_*$ is the constant mentioned above \eqref{para:pathconstant}.
\end{assumption}

\begin{prop}\label{prop:nocycle}    
Let $X=\big(X(t)\big)_{t\in \R_{\ge 0}}$ be a continuous-time Markov process on $\Z_{\ge 0}^d$ that satisfies  
Assumption \ref{assump} and Assumption \ref{assump2}.  
Suppose Assumption \ref{A:theta}  holds for some $\theta\in(0,\theta_2]$. 
Then for any $\delta\in(0,1)$,     
    there exist  constants $N_{\delta}\in \mathbb{N}$ and $C_{\delta}\in (0,\infty)$ such that 
    for any $n\geq N_{\delta}$ and any initial state $x\in \mathbb{I}_n$, 
    \begin{equation}\label{eq:problowerbdd_2}
   \inf_{t\in[0,\,C_{\delta}\, n^{\theta}] } \P_{x} \left( \|X(t)\|_{d-1,\infty}> \frac{n}{2} \right) \geq \delta.
    \end{equation} 
\end{prop}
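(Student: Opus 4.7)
The goal is to establish \eqref{eq:problowerbdd_2}, or equivalently, to find $C_\delta, N_\delta$ so that
\[
    \sup_{t\in [0, C_\delta n^\theta]} \P_x(\|X(t)\|_{d-1,\infty} \leq n/2) \leq 1-\delta
\]
for every $x \in \mathbb{I}_n$ and $n \geq N_\delta$. The plan is to apply verbatim the three-term decomposition derived above \eqref{eq:thm1_also_general}, which for any $b > 0$ and $t \geq 0$ bounds the left-hand side by $P_1(t) + P_2 + P_3(t)$, where $P_1(t) = \P_x(N(t)\geq \lfloor bn^\theta\rfloor)$, $P_2 = \P_x(\tau^Z_{n/2+c_*}\leq \lfloor bn^\theta\rfloor)$, and $P_3(t)$ is the rare-excursion remainder. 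Each term is already controlled: Lemma \ref{lm:unit_Poisson_bdd} for $P_1$, Assumption \ref{A:theta} (now a hypothesis) for $P_2$, and Lemma \ref{lm:rare_excursion_general} for $P_3$. The proof then reduces to choosing $b$ small and $C_\delta$ smaller still so that each term is bounded by $(1-\delta)/3$.

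Concretely, set $\varepsilon := (1-\delta)/3$. First select $b \in (0, b_0)$ small enough that both $b C_0 < \varepsilon$ and $b c_2 2^{\theta_2} < \varepsilon$; this is possible since both conditions ask only that $b$ lie below an explicit positive threshold. With this $b$ fixed, Lemma \ref{lm:rare_excursion_general} together with the hypothesis $\theta \leq \theta_2$ yields
\[
    P_3(t) \leq b c_2 2^{\theta_2} n^{\theta - \theta_2} \leq b c_2 2^{\theta_2} < \varepsilon
\]
uniformly in $t$ and in $n > 2(N_0 + c_*)$. Assumption \ref{A:theta} provides an integer $N_1$ such that $\sup_{x\in \mathbb{I}_n} P_2 < \varepsilon$ for all $n \geq N_1$. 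Finally pick any $C_\delta \in (0, b\varepsilon/(4\kappa))$, so that Lemma \ref{lm:unit_Poisson_bdd} combined with the elementary bound $\lfloor bn^\theta \rfloor \geq bn^\theta/2$ (valid for $n$ past some threshold $N_2$) gives, for every $t \in [0, C_\delta n^\theta]$,
\[
    P_1(t) \leq \frac{\kappa t}{\lfloor bn^\theta\rfloor} \leq \frac{2\kappa C_\delta}{b} < \varepsilon.
\]
Taking $N_\delta := \max\{N_1, N_2, 2(N_0+c_*)\}$ then gives $P_1(t) + P_2 + P_3(t) < 3\varepsilon = 1-\delta$ uniformly over $t \in [0, C_\delta n^\theta]$ and $x \in \mathbb{I}_n$ with $n \geq N_\delta$, which is exactly \eqref{eq:problowerbdd_2}.

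The only ``obstacle'' is bookkeeping: the analytic content has already been pushed into Lemmas \ref{lm:unit_Poisson_bdd} and \ref{lm:rare_excursion_general} and into Assumption \ref{A:theta}, and the decomposition \eqref{eq:thm1_also_0}--\eqref{eq:thm1_also_general} is set up to be summed term-by-term. One should take a little care that the bound in Lemma \ref{lm:rare_excursion_general} is uniform in $t$ and that the bound in Lemma \ref{lm:unit_Poisson_bdd} is monotone in $t$, so that passing from the pointwise estimate to the supremum over $t \in [0, C_\delta n^\theta]$ is automatic.
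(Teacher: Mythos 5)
Your proposal is correct and follows essentially the same route as the paper: the same three-term decomposition \eqref{eq:thm1_also_0}--\eqref{eq:thm1_also_general}, with Lemma \ref{lm:unit_Poisson_bdd}, Assumption \ref{A:theta}, and Lemma \ref{lm:rare_excursion_general} controlling the respective terms and $b$, then $C_\delta$, chosen small. If anything, your treatment is slightly more explicit than the paper's, which absorbs the $\kappa t/\lfloor bn^\theta\rfloor$ term into the choice of $C_\delta$ without spelling it out.
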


\begin{proof}
Upper bounds of the three terms on the right of \eqref{eq:thm1_also_0} and \eqref{eq:thm1_also_general} are given in 
Lemma \ref{lm:unit_Poisson_bdd} and  Lemma \ref{lm:rare_excursion_general} respectively.
From these upper bounds, for  any $t\in\R_{\ge 0}$, $\theta,\,b\in(0,\infty)$, $n> 4N_0$ and $x\in \mathbb{I}_n$, 
\begin{equation}\label{eq:thm1_also_2_general}
    \P_{x}\left(\|X(t)\|_{d-1,\infty}\leq \frac{n}{2}  \right) \leq 
    \frac{\kappa t}{\lfloor bn^{\theta} \rfloor}+ 
    \P_{x}\left( \tau^Z_{n/2\,+c_* } \leq \lfloor bn^{\theta} \rfloor  \right)
    + b c_2  2^{\theta_2}\,n^{\theta-\theta_2}.
\end{equation}
By  Assumption \ref{A:theta}, 
\begin{equation}\label{eq:thm1_also_2_general_2}
   \limsup_{n\to \infty}\sup_{x\in \mathbb{I}_n} \P_{x}\left(\|X(t)\|_{d-1,\infty}\leq \frac{n}{2}  \right) \leq 
   b \,C_0
    + b c_2  2^{\theta_2}.
\end{equation}

For any $\delta\in (0,1)$, 
we fix $b=b_{\delta}$ small enough so that $b \,C_0
    + b c_2  2^{\theta_2}<\frac{1-\delta}{2}$. Hence there exists $N_{\delta}$ such that 
    for any $n\geq N_{\delta}$, we have
\[
\sup_{x\in \mathbb{I}_n} \P_{x}\left(\|X(t)\|_{d-1,\infty}\leq \frac{n}{2}  \right) <1-\delta.
\]

We have completed the proof of Proposition \ref{prop:nocycle}.
\end{proof}


\begin{prop}\label{prop:cycle gives tauZ}    
Let $X=\big(X(t)\big)_{t\in \R_{\ge 0}}$ be a continuous-time Markov process on $\Z_{\ge 0}^d$ that satisfies  
Assumption \ref{assump}. Then 
Assumption \ref{A:theta} holds with $\theta:= \min\{1+\theta_1,\,\theta_2\}$.
\end{prop}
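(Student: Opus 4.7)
The plan is to apply Lemma \ref{L:1Dim_alt} with the specific choice $\theta := \min\{1+\theta_1,\,\theta_2\}$, then analyze the resulting upper bound as $n\to\infty$. First I note that Lemma \ref{L:1Dim_alt} is stated for the hitting time $\tau^Z_{n/2}$, whereas Assumption \ref{A:theta} concerns $\tau^Z_{n/2 + c_*}$. As the paper itself points out just before the statement of Lemma \ref{L:1Dim_alt}, the proof for general $c_*$ is identical up to replacing $n/2$ by $n/2+c_*$ throughout, which changes only multiplicative constants and shifts the threshold $K:=\lfloor (n/2+c_*)/c^*\rfloor$ by $O(1)$. So I may work with the bound
\[
\sup_{x\in \mathbb{I}_n}\P_{x}\!\left( \tau_{n/2+c_*}^Z \leq \lfloor bn^{\theta} \rfloor \right) \,\leq\, b\, c_2\, 2^{\theta_2}\, n ^{\theta-\theta_2} \;+\; \frac{b\,n\,c_1\,2^{\theta_1}}{\sqrt{\pi K}}\,\bigl( b\,A\, n^{\theta-(1+\theta_1)} \bigr)^{K},
\]
where $A:=3e\,c^*\,2^{\theta_1}\,c_1$ and $K = \Theta(n)$, valid for all sufficiently large $n$.

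The first term: since $\theta \le \theta_2$ by definition, $n^{\theta-\theta_2}\le 1$, so this term is at most $b\,c_2\,2^{\theta_2}$ uniformly in $n$. This is where $C_0 := c_2\,2^{\theta_2}$ (possibly adjusted by a constant from the $c_*>0$ extension) will come from.

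The second term: I will show it tends to $0$ as $n\to\infty$, provided $b$ is chosen smaller than some threshold $b_0 := 1/A$. Split into two cases. If $\theta = \theta_2 < 1+\theta_1$, then $n^{\theta-(1+\theta_1)} \to 0$ polynomially fast, so $(b A\, n^{\theta-(1+\theta_1)})^K$ is dominated by an $n$-th power of a quantity going to $0$; the polynomial prefactor $b n/\sqrt{K} = O(\sqrt{n})$ is easily absorbed, and the whole second term vanishes for every $b>0$. If instead $\theta = 1+\theta_1$, then $n^{\theta-(1+\theta_1)} = 1$ and the second term becomes $\frac{b\,n\,c_1 2^{\theta_1}}{\sqrt{\pi K}}(bA)^K$. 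Choosing $b < b_0 = 1/A$ makes $bA \in (0,1)$, so $(bA)^K$ decays geometrically in $K \asymp n$, crushing the polynomial prefactor; hence this term also vanishes as $n\to\infty$. In both subcases the conclusion is the same.

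Putting the two estimates together, for every $b\in(0,b_0)$,
\[
\limsup_{n\to \infty}\sup_{x\in \mathbb{I}_n}\P_{x}\!\left( \tau_{n/2+c_*}^Z \leq \lfloor bn^{\theta} \rfloor \right) \;\le\; b\,C_0,
\]
which is precisely Assumption \ref{A:theta}. The main (and only real) technical point is the geometric-beats-polynomial estimate when $\theta = 1+\theta_1$: this is exactly why the threshold $b_0$ is forced to be finite, and it is the structural reason that the exponent $\theta$ cannot be pushed beyond $1+\theta_1$ by the method of Lemma \ref{L:1Dim_alt}. Verifying the $c_*>0$ version of Lemma \ref{L:1Dim_alt} is mechanical and I would just record this extension as a remark rather than re-proving it in full.
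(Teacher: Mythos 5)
Your proposal is correct and follows essentially the same route as the paper: apply Lemma \ref{L:1Dim_alt} with $\theta=\min\{1+\theta_1,\theta_2\}$, bound the first term by $b\,c_2 2^{\theta_2}$ using $\theta\le\theta_2$, and kill the second term via the case split $\theta<1+\theta_1$ (polynomial decay) versus $\theta=1+\theta_1$ (geometric decay in $K\asymp n$, forcing $b_0=1/(3e\,c^*2^{\theta_1}c_1)$), exactly as the paper does. Your explicit handling of the $c_*$-shift and of the two subcases is slightly more detailed than the paper's one-paragraph proof, but the substance is identical.
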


\begin{proof}
Proposition \ref{prop:cycle gives tauZ} follows immediately from Lemma \ref{L:1Dim_alt}, by taking $b_0=\frac{1}{3e |c^*|\,2^{\theta_1}\,c_1}$ and $C_0=c_2 2^{\theta_2}$. More precisely, 
$\theta\leq \theta_2$ implies that the first term on the right of \eqref{eq:prob_term2_general} is at most $b\,c_2 2^{\theta_2}$. Furthermore, 
 $\theta\leq 1+\theta_1$ implies that the second term on the right of \eqref{eq:prob_term2_general} is zero after taking  $\limsup_{n\to\infty}$.
\end{proof}

Now we turn to the proof of Theorem \ref{thm}.
\begin{proof}[Proof of Theorem \ref{thm}]
Theorem \ref{thm} follows immediately from Propositions \ref{prop:nocycle} and \ref{prop:cycle gives tauZ}. This is because Proposition \ref{prop:cycle gives tauZ} implies that Assumption \ref{A:theta} holds with $\theta:= \min\{1+\theta_1,\,\theta_2\}$. Then  Proposition \ref{prop:nocycle} implies that
\eqref{eq:problowerbdd_2} holds with $\theta:= \min\{1+\theta_1,\,\theta_2\}$. The latter is the desired inequality \eqref{eq:thm1_also}.
\end{proof}



\section{Proofs of Theorems \ref{prop:cycle} and \ref{prop:cycle2}} \label{sec:proof 2}

In the discussion below, we mainly consider the dynamic of stochastic model when species $A$ is prevalent and species $B$ is absent, which is consistent with our choice of initial conditions in Assumption \ref{assump}. And throughout this section, we will write $f(x)= O(g(x))$ if 
$$\limsup_{x_1\to \infty}\frac{f(x)}{g(x)}\in (0,\infty).$$
where $x_1$ is the first coordinate of $x$, which represents the number of $A$ species.

\begin{proof}[Proof of Theorem \ref{prop:cycle}]

Recall from \eqref{eq:dominantcycle} that for any sequence  $\eta=(\eta_k)_{k=1}$ of elements in $\Z^d$, the event that $X^D$ follows $\eta$ is 
\begin{align}
           E_{\eta}:=\{X^D(k)-X^D(k-1)=\eta_k \quad \text{for } k=1,2,\dots,|\eta|\}.
\end{align}
where $|\eta|$ is the length of the sequence. For any sequence $\eta\in \Z^d$ and any $i=1,2,3,\dots,|\eta|$, we define
\begin{align}\label{eq:E not i}
           E^{\neq i}_{\eta}:=\{X^D(k)-X^D(k-1)=\eta_k \quad \text{for } k=1,2,\dots,i-1,\quad \text{and }X^D(i)- X^D(i-1)\neq \eta_i\}
\end{align}
be the event that $X^D$ exits the path  $X^D(0)+\gamma_\eta$ at the $i$ th transition. 

With these notations, we will show Assumption \ref{assump}(i) holds with
\begin{align}\label{eq:T1_in_assump}
    \theta_1 = \min_{i} \{\alpha_{i-1} - \alpha_{i-2}: 2\leq i \leq L\}, \qquad \mathcal{T}^{(1)} = \{ \eta^0 \},
\end{align}
where  $\eta^0$ is the sequence of vectors specified by the reaction vectors, i.e.
\begin{align} \label{eq:T1traj} 
    \eta^0 & := \left(z_1 - z_0, z_2-z_1, \cdots, z_{L-1} - z_{L-2}, z_0 - z_{L-1}\right).
\end{align}
Assumption \ref{assump}(i) requires that the path associated with $\eta^0$ is a cycle. The latter clearly holds since  
\[
    \sum_{i=1}^L \eta_i^0 = \sum_{i=1}^L (z_{i} -z_{i-1})  = 0.
\]
To verify Assumptions \ref{assump}(i), we will focus on the complement of the event, 
\begin{align*}
    \P_{(n,0)}\left( E_{\eta^0}^c \right ) = \sum_{i=1}^L    \P_{(n,0)}\left( E^{\neq i}_{\eta^0} \right). 
\end{align*}
Due to absence of B species at $(n,0)$,  $z_0\rightarrow z_1$ is the only possible jump and $\P_{(n,0)}\left( E_{\eta^0}^{\neq 1} \right ) = 0$. For $2\leq i \leq L$, by \eqref{eq:transition prob of XD}, 

{\small
\begin{align*}
    &\P_{(n,0)}\left( E^{\neq i}_{\eta^0} \right ) \\ 
     = &\P_{(n,0)}\Big(X^D(1) - X^D(0) = z_1-z_0,\,
     X^D(2) - X^D(1) = z_2-z_1, \;\ldots  \\
    & \qquad \qquad \ldots,\, X^D(i-1) - X^D(i-2)  = z_{i-1}- z_{i-2}, \, X^D(i) - X^D(i-1)\not = z_i  -z_{i-1} \Big)  \\
    = &  \prod_{k=1}^{i-1} \P_{(n,0)+z_{k-1}}\left( X^D(1) - X^D(0) = z_{k} -z_{k-1}  \right ) \cdot   \P_{(n,0)+z_{i-1}}\left(  X^D(1) - X^D(0) \not = z_i-z_{i-1}  \right)  
\end{align*}
\begin{equation}\label{eq:asdasihfasd}
    = \prod_{k=1}^{i-1}  \frac{\displaystyle \sum_{ \substack{\ell: \beta_{\ell} \leq \beta_{k-1} \\ z_{\ell+1}- z_{\ell} = z_k - z_{k-1}}}\kappa_{\ell} \frac{(n+\alpha_{k-1})!(\beta_{k-1})!}{(n+\alpha_{k-1}-\alpha_\ell)!(\beta_{k-1} - \beta_\ell)!}}{\displaystyle \sum_{\ell: \beta_\ell \leq \beta_{k-1}} \kappa_{\ell}\frac{(n+\alpha_{k-1})!(\beta_{k-1})!}{(n+\alpha_{k-1}-\alpha_{\ell})!(\beta_{k-1}-\beta_{\ell})!}} \cdot \frac{\displaystyle\sum_{ \substack{\ell: \beta_\ell \leq \beta_{i-1} \\ z_{\ell+1}- z_{\ell} \not= z_i - z_{i-1}}} \kappa_{\ell} \frac{(n+\alpha_{i-1})!(\beta_{i-1})!}{(n+\alpha_{i-1}-\alpha_{\ell})!(\beta_{i-1}-\beta_{\ell})!}}{\displaystyle\sum_{\ell: \beta_\ell \leq \beta_{i-1}} \kappa_{\ell} \frac{(n+\alpha_{i-1})!(\beta_{i-1})!}{(n+\alpha_{i-1}-\alpha_{\ell})!(\beta_{i-1}-\beta_{\ell})!}}
\end{equation}}
Reaction rates in the direction of $z_k-z_{k-1}$ is proportional to the total reaction intensity sharing the same direction, namely $\displaystyle \sum_{ \substack{\ell: z_{\ell+1}- z_{\ell} = z_k - z_{k-1}}} \lambda_\ell(x)$. 
However due to the choice of mass-action kinetics, $\lambda_{\ell}(x) = 0$ if $\beta_\ell > x_2$, and hence 
\[
    \sum_{\substack{\ell: z_{\ell+1}- z_{\ell} = z_k - z_{k-1}}}  \lambda_\ell (x) = 
    \sum_{ \substack{\ell: \beta_\ell \leq x_2 \\ z_{\ell+1}- z_{\ell} = z_k - z_{k-1}}} \kappa_{\ell} \frac{x_1!x_2!}{(x_1-\alpha_{\ell})!(x_2-\beta_{\ell})!}
\]
Similarly, reaction rates for all possible transitions are given by 
\[
    \sum_{\substack{\ell}}  \lambda_\ell (x) = 
    \sum_{ \substack{\ell: \beta_\ell \leq x_2}} \kappa_{\ell} \frac{x_1!x_2!}{(x_1-\alpha_{\ell})!(x_2-\beta_{\ell})!.}
\] Using these transition rates, we can derive \eqref{eq:asdasihfasd} combined with \eqref{eq:transition prob of XD}.

Now, we investigate the order of magnitude of each term shown in \eqref{eq:asdasihfasd}. Since $\beta_i$ is strictly increasing, $\{\beta_\ell\le \beta_{k-1}\}$ and $\{\beta_\ell \le \beta_{i-1}\}$ can be replaced by $\{\ell\le k-1\}$ and $\{\ell \le i-1\}$, respectively in \eqref{eq:asdasihfasd}. Thus
{\small
\begin{align}\label{eq:betaincreasing}
    \sum_{\ell: \beta_\ell \leq \beta_{k-1}} \kappa_{\ell}\frac{(n+\alpha_{k-1})!(\beta_{k-1})!}{(n+\alpha_{k-1}-\alpha_{\ell})!(\beta_{k-1}-\beta_{\ell})!} = \sum_{\ell=0}^{k-1} \kappa_{\ell} \frac{(n+\alpha_{k-1})!(\beta_{k-1})!}{(n+\alpha_{k-1}-\alpha_{\ell})!(\beta_{k-1}-\beta_{\ell})!}.
\end{align}}
Since $\alpha_i$ is strictly increasing, 
{\small
\begin{align}\label{eq:alphaincreasing}
    \sum_{\ell=0}^{k-1} \kappa_{\ell} \frac{(n+\alpha_{k-1})!(\beta_{k-1})!}{(n+\alpha_{k-1}-\alpha_{\ell})!(\beta_{k-1}-\beta_{\ell})!}= O\left(\kappa_{k-1}\frac{(n+\alpha_{k-1})!(\beta_{k-1})!}{n!} \right)= O\left(\frac{(n+\alpha_{k-1})!}{n!} \right),
\end{align}}
Similarly by monotonicity of $\beta_i$ and $\alpha_i$, 
{\small
\begin{align}\label{eq:asdasfafasdasd}
\begin{split}
    \sum_{ \substack{\ell: \beta_\ell \leq \beta_{k-1} \\ z_{\ell+1}- z_{\ell} =  z_k - z_{k-1}}}\kappa_{\ell} \frac{(n+\alpha_{k-1})!(\beta_{k-1})!}{(n+\alpha_{k-1}-\alpha_\ell)!(\beta_{k-1} - \beta_\ell)!} & = \sum_{\substack{\ell\leq k-1:\\ z_{\ell+1}- z_{\ell} =  z_k - z_{k-1}}} \kappa_{\ell} \frac{(n+\alpha_{k-1})!(\beta_{k-1})!}{(n+\alpha_{k-1}-\alpha_\ell)!(\beta_{k-1} - \beta_\ell)!}  \\ 
    & = O\left(\frac{(n+\alpha_{k-1})!}{n!} \right) \\ 
    \sum_{ \substack{\ell: \beta_\ell \leq \beta_{i-1} \\ z_{\ell+1}- z_{\ell} \not= z_i - z_{i-1}}} \kappa_{\ell} \frac{(n+\alpha_{i-1})!(\beta_{i-1})!}{(n+\alpha_{i-1}-\alpha_{\ell})!(\beta_{i-1}-\beta_{\ell})!} & = \sum_{ \substack{\ell \leq i-1:  \\ z_{\ell+1}- z_{\ell} \not= z_i - z_{i-1}}} \kappa_{\ell} \frac{(n+\alpha_{i-1})!(\beta_{i-1})!}{(n+\alpha_{i-1}-\alpha_{\ell})!(\beta_{i-1}-\beta_{\ell})!}\\
    & \leq O \left(\frac{(n+\alpha_{i-1})!}{(n+\alpha_{i-1}-\alpha_{i-2})!}\right)
\end{split}
\end{align}}
Combining equation \eqref{eq:alphaincreasing} and \eqref{eq:asdasfafasdasd} in \eqref{eq:asdasihfasd}, we have
\begin{align}
      \P_{(n,0)}\left( E^{\neq i}_{\eta^0} \right )
     =  \prod_{k=1}^{i-1}  O  \left(\frac{\displaystyle \frac{(n+\alpha_{k-1})!}{n!} }{\displaystyle\frac{(n+\alpha_{k-1})!}{n!} } \right)   O\left( \frac{\displaystyle \frac{(n+\alpha_{i-1})!}{(n+\alpha_{i-1}-\alpha_{i-2})!}}{\displaystyle\frac{(n+\alpha_{i-1})!}{n!}} \right) &=  O \left (\frac{n!}{(n+\alpha_{i-1}-\alpha_{i-2})!}\right) \notag \\
    &= O\left( n^{\alpha_{i-2} - \alpha_{i-1}} \right). \label{eq:sadajahsfuauis} 
 \end{align}
Consequently, since $\alpha_i$ is increasing, 
\begin{align*}
       \P_{(n,0)}\left( E_{\eta^0}^c \right ) = \sum_{i=1}^L    \P_{(n,0)}\left( E^{\neq i}_{\eta^0} \right ) = \sum_{i=2}^L  O\left( n^{\alpha_{i-2} - \alpha_{i-1}} \right) = O\left(n^{-\theta_1}\right),
\end{align*}
where $\theta_1 = \min_{i} \{\alpha_{i-1} - \alpha_{i-2}: 2\leq i \leq L\}$ specifies the probability, as in Assumption \ref{assump}(i), of most likely excursions from the $\eta^0$ \eqref{eq:T1traj}. Equation \eqref{eq:assump_1st} is then verified by considering the complement event.

Finally, Assumption \ref{assump2} holds with $\kappa = \kappa_0$. This is because   any stochastic trajectory started on the axis $\{x_2 = 0\}$ only leave the axis after the occurrence of reaction $\emptyset \rightarrow \alpha_1A +\beta_1 B$, which takes an exponential time with parameter $\kappa_0$. The proof of Theorem \ref{prop:cycle} is now complete.
\end{proof}

\medskip

Throughout the proof of Theorem \ref{prop:cycle2}, we investigate the probability of each transition along the trajectory of $X^D(t)$. At any state $x\in \Z^2_{\ge 0}$ with $1\leq x_2\leq L$ and large enough $x_1$, probability of each transition is proportional to the intensity of reaction $z_{\ell-1} \rightarrow z_\ell$ in \eqref{mass}, 
\begin{equation}\label{eq:massinproposition}
    \lambda_{z_{\ell-1} \rightarrow z_\ell}(x) = \kappa_\ell \frac{x_1! }{(x_1-\alpha_{\ell-1})} \frac{x_2!}{(x_2-\beta_{\ell-1})} = O(x_1^{\alpha_{\ell-1}}) 1_{\beta_{\ell-1} \leq x_2}. 
\end{equation}
The \textit{most likely reaction} is the reaction $z_{i-1}\rightarrow z_{i}$, or the ordered pair $(z_{i-1},z_i)\in \mathcal{R}$, that maximizes the intensity function $\lambda_{z_{\ell-1} \rightarrow z_\ell}(x)$ in \eqref{eq:massinproposition} among all $\ell$, and 
by Assumption \ref{assump:cyclic}, 
\begin{equation}\label{eq: mostlikelyreaction}
    i = \argmax \{\alpha_{\ell-1}: \beta_{\ell-1} \leq x_2\} = \max \{\ell: \beta_{\ell-1} \leq x_2\} = x_2 +1 
\end{equation}
The \textit{second most likely reaction} is the reaction $z_{j-1}\rightarrow z_{j}$, or the ordered pair $(z_{j-1},z_{j})\in \mathcal{R}$, that achieves the second largest intensity function $\lambda_{z_{\ell-1} \rightarrow z_\ell}(x)$ in \eqref{eq:massinproposition} among all $\ell$, or more specifically,
\[
    j = \argmax \{\alpha_{\ell-1}: \beta_{\ell-1} \leq x_2, \ell\neq i\} = \max \{\ell: \beta_{\ell-1} \leq x_2\} - 1 = x_2,
\]
where $i$ is the index in \eqref{eq: mostlikelyreaction} for the \textit{most likely reaction}. 

\medskip

\noindent
\begin{proof}[Proof of Theorem \ref{prop:cycle2}]

Note that under Assumption \ref{assump:cyclic}, both $\alpha_i$ and $\beta_i$ are increasing. Hence by Theorem \ref{prop:cycle},  Assumption \ref{assump}(i) is satisfied with 
\begin{align*}
    \theta_1 = \min_{i} \{\alpha_{i-1} - \alpha_{i-2}: 2\leq i \leq L\}, \qquad \mathcal{T}^{(1)} = \{ \eta^0 \},
\end{align*}
and Assumption \ref{assump2} is satisfied with $\kappa = \kappa_0$.

We next show that Assumption \ref{assump}(ii) holds for
\begin{equation}\label{eq:theta_in_assump}
    \theta_2  = \min_{2\leq i\leq L: \left(\alpha_{i-1} - \alpha_{i-2}\right)>\theta_1} \left(\alpha_{i-1} - \alpha_{i-2}\right) \wedge 2\theta_1 > \theta_1,
\end{equation}
and
    \begin{align}\label{eq:T2_in_assump}
    \mathcal{T}^{(2)} = \{\eta^i: \alpha_{i-1} - \alpha_{i-2} = \theta_1, 2\leq i\leq L \}\neq \emptyset,
\end{align}
where, for all $i = 2,3, \cdots, L-1$, $\eta^i$ is a sequence with length $L$, defined by
\begin{align}\label{eq:T2traj} 
    \eta^i & =  \left(\eta_1^i = z_1 - z_0,, \cdots, \eta_{i-1}^i = z_{i-1} - z_{i-2},\;  \eta_{i}^i=\eta_{i-1}^i, \;\eta_{i+1}^i = z_{i+1} - z_{i} \cdots ,\eta_{L}^i = z_0 - z_{L-1}\right), 
\end{align}
and $\eta^L$ is a sequence with length $2L$, defined by
\begin{align}\label{eq:T2trajTop} 
    \eta^L & =  ( \eta_1^L = z_1 - z_0,    \eta_2^L = z_2-z_1,\cdots, \eta_{L-1}^L  = z_{L-1} - z_{L-2},   \eta_{L}^L  = \eta_{L-1}^L, \notag \\
    & \eta_{L+1}^L = z_0 - z_{L-1},  \eta_{L+2}^L = z_2 - z_1, \cdots,  \eta_{2L-1}^L = z_{L-1} - z_{L-2}, \eta_{2L} = Z_0-z_{L-1} ). 
\end{align}

To identify the sequences of transitions in $\mathcal{T}^{(2)}$, we start by following the cyclic trajectory $\eta^0$ \eqref{eq:T1traj} until exiting at the $i$ th transition with the reaction having the second highest probability.  Note that Assumption \ref{assump}(ii) requires that the path associated with $\eta^i$ ends on the axis $\{x_2=0\}\setminus \{{\bf 0}\}$ for $2\leq i \leq L$.

\begin{enumerate}
    \item For $i = 2,3 ,... , L-1$, $\eta^i$ exits the cyclic trajectory $\eta^0$ \eqref{eq:T1traj} at the $i$ th transition. More specifically $\eta^i$ shares the same transitions as $\eta^0$ for the first $i-1$ steps. Upon exiting, the $i$ th transition is given by the \textit{second most likely reaction} $z_{i-2}\rightarrow z_{i-1}$ instead of the \textit{most likely reaction} $z_{i-1} \rightarrow z_i$ as in  $\eta^0$. Note that for both $\eta^i$ and $\eta^0$, the number of species B are both given by $i$ after $i$ th transition (the assumption  $\beta_j - \beta_{j-1}=1$ for all $j$ is used here). Hence, all subsequent \textit{most likely reactions} for $\eta^i$ remain the same as $\eta^0$ until returning to the $x-$axis. 
    
    By following \textit{most likely reactions} except the $i$ th transition, which follows \textit{the second most likely reaction}, $\eta^i$ should be the trajectory exiting $\eta^0$ at $i$ th transition with the highest probability. As the path associated with $\eta^i$ ends on the axis $\{x_2=0\}\setminus \{{\bf 0}\}$, the accumulated changes in A species for $\eta^i$ can be computed by
    \begin{align*}
        x_A^{\eta^i} & := \left(\sum_{j=1}^L \eta_j^i \right)_A \\
        &= \left( z_1 - z_0 + \cdots + z_{i-1} -z_{i-2} +   \underbrace{(z_{i-1} -z_{i-2})}_{\text{2nd most likely reaction}} +z_{i+1} - z_{i}+ \cdots  + z_0 -z_{L-1}\right)_A  \\
        & = \left( z_{i-1} - z_0 + z_{i-1} - z_{i-2} +  z_ 0 - z_i \right)_A = \left(  2z_{i-1} - z_{i-2} - z_i \right)_A  = 2\alpha_{i-1} - \alpha_{i-2}- \alpha_i. 
    \end{align*}

    \item For $i=L$, $\eta^L$ exits the cyclic trajectory $\eta^0$ \eqref{eq:T1traj}  at the $L$ th transition. More specifically $\eta^L$ share the same sequence of transitions for the first $L-1$ transitions. Upon exiting, the $L$ th transition is given by the \textit{second most likely reaction} $z_{L-2}\rightarrow z_{L-1}$ instead of the \textit{most likely reaction} $z_{L-1} \rightarrow z_0$ as in $\eta^0$. The number of B species after $L$ th transition is $L$ (the assumption  $\beta_j - \beta_{j-1}=1$ for all $j$ is used here). Hence the most likely reaction $z_{L-1} \rightarrow z_0$ occurs, which reduce the number of B species to 1. Then all subsequent \textit{most likely reactions} share the same trajectory as $\eta^0$ \eqref{eq:T1traj} after its first transition (the assumption  $\beta_j=j$ for all $j$ is used here), namely $\eta^L_{L+\ell} = \eta^0_\ell$ for any $2\leq \ell \leq L$. 

    In summary, by following most likely reactions except the $L$ th transition, which follows the \textit{second most likely reaction}, $\eta^L$ should be the trajectory exiting $\eta^0$ at $L$ th transition with the highest probability. As the path associated with $\eta^L$ ends on the axis $\{x_2=0\}\setminus \{{\bf 0}\}$, the accumulated changes in A species for $\eta^L$ can be computed by
    \begin{align*}
        x_A^{\eta^L} := \left(\sum_{j=1}^{2L} \eta_j^L \right)_A & = (  z_1 - z_0 + \cdots + z_{L-1} -z_{L-2} + \underbrace{(z_{L-1} -z_{L-2})}_{\text{2nd most likely reaction}} + z_0 - z_{L-1} \\ 
         & +   z_2-z_1 + \cdots + z_{L-1} -z_{L-2}  + z_0 -z_{L-1} )_A  \\
        & = \left( z_{L-1} - z_0 + z_{L-1} - z_{L-2} +  z_ 0 - z_1 \right)_A = \left( z_{L-1}-z_{L-2} +z_0 -z_1\right)_A \\ 
        &  = \alpha_{L-1} - \alpha_{L-2}- \alpha_1. 
    \end{align*}  
\end{enumerate}

Next we verify equation \eqref{eq:assump_3rd} with $\theta_2$ in \eqref{eq:theta_in_assump}. We will follow the similar approach in the proof of Theorem \ref{prop:cycle} by considering the complement of the event $\displaystyle \bigcup_{\eta \in \mathcal{T}^{(1)} \cup \mathcal{T}^{(2)}} E_{\eta}$, which happens in one the following three scenarios. 

\begin{enumerate}
    \item The stochastic process follows the cyclic trajectory $(n,0)+\gamma_{\eta^0}$ until exiting at $i$ th transition where $\eta^i\not \in \mathcal{T}^{(2)}$, whose probability is computed in \eqref{eq:sadajahsfuauis}, and can be bounded by 
    \begin{align*}
         \sum_{j: \eta^j \not \in \mathcal{T}^{(2)}} \P_{(n,0)}\left( E^{\neq j}_{\eta^0}  \right) \leq  \sum_{j: \eta^j\not \in \mathcal{T}^{(2)}} O\left(n^{\alpha_{j-2} - \alpha_{j-1}}\right) = O\left( n^{-\tilde \theta}\right), 
    \end{align*} 
    where $\displaystyle \tilde\theta = \min_{2\leq i\leq L:\, \left(\alpha_{i-1} - \alpha_{i-2}\right)>\theta_1} \left(\alpha_{i-1} - \alpha_{i-2}\right)$. 
    
    \item The stochastic process follows the cyclic trajectory $(n,0)+\gamma_{\eta^0}$ until exiting at $i$ th transition where $\eta^i \in \mathcal{T}^{(2)}$, however the transition is neither the most likely, nor the second most likely. Then, 
    \begin{align*}
        & \P_{(n,0)}\left( X^D(1) - X^D(0)  = \eta_{1}^i, \cdots, X^D(i-1) - X^D_{i-2} =\eta_{i-1}^i  , X^D(i) - X^D(i-1) \not= z_i - z_{i-1} \text{ or } z_{i-1} - z_{i-2},\right) \\
        = & \prod_{k=1}^{i-1} \P_{(n,0)+z_{k-1}}\left( X^D(1) - X^D(0) = z_{k} -z_{k-1}  \right )  \P_{(n,0)+z_{i-1}}\left(  X^D(1) - X^D(0) \not= z_i - z_{i-1} \text{ or } z_{i-1} - z_{i-2}, \right) \\
        \leq & \P_{(n,0)+z_{i-1}}\left(  X^D(1) - X^D(0) \not= z_i - z_{i-1} \text{ or } z_{i-1} - z_{i-2} \right) \\
        \leq & \frac{ \displaystyle \sum_{\ell=0}^{i-3} \kappa_{\ell} \frac{(n+\alpha_{i-1})!(\beta_{i-1})!}{(n+\alpha_{i-1}-\alpha_{\ell})!(\beta_{i-1}-\beta_{\ell})!} }{\displaystyle \sum_{\ell=0}^{i-1} \kappa_{\ell} \frac{(n+\alpha_{i-1})!(\beta_{i-1})!}{(n+\alpha_{i-1}-\alpha_{\ell})!(\beta_{i-1}-\beta_{\ell})!} } = O\left(  \dfrac{ \kappa_{i-3}\dfrac{(n+\alpha_{i-1})!(\beta_{i-1})!}{(n+\alpha_{i-1}-\alpha_{i-3})!(\beta_{i-1}-\beta_{i-3})!}}{\kappa_{i-1}\dfrac{(n+\alpha_{i-1})!(\beta_{i-1})!}{n!}} \right) = O\left(n^{\alpha_{i-3} - \alpha_{i-1}} \right)
    \end{align*}

    \item The stochastic process follows the cyclic trajectory $(n,0)+\gamma_{\eta^0}$ until exiting at $i$ th transition where $\eta^i \in \mathcal{T}^{(2)}$, however the trajectory does not follow $\eta^i$ and it exits $\eta^i$ at some $k$ th transition for some $i<k \leq |\eta^i|$. Then this event can be represented as $E^{\neq k}_{\eta^i}$, and 

    \begin{align*}
        & \P_{(n,0)}\left( E^{\neq k}_{\eta^i} \right) = \P_{(n,0)}\left(X^D(1) - X^D(0) = \eta^i_1,\cdots, X^D(k-1) - X^D_{k-2} = \eta^i_{k-1}, X^D(k) - X^D(k-1) \neq \eta^i_k  \right) \\
        \leq & \P_{(n,0)}\left( E^{\neq i}_{\eta^0} \right) \P_{X^D(k-1)}\left( X^D(1) - X^D(0) \neq \eta^i_k \right) =  O\left( n^{-\theta_1}\right) \cdot O\left( n^{-\theta_1}\right)  
    \end{align*}
    Note that at the last equation $\P_{X^D(k-1)}\left( X^D(1) - X^D(0) \neq \eta^i_k \right) = O\left( n^{-\theta_1}\right)$ holds since all transitions except $i$ th transition are most likely reactions, which follows similar calculations in $ \P_{(n,0)}\left( E^{\neq i}_{\eta^0} \right)$ established in the proof of Theorem \ref{prop:cycle}. Hence, 
    \begin{align*}
        & \P_{(n,0)}\left( \bigcup_{i<k\leq |\eta^i|} E^{\neq k}_{\eta^i} \right)  \leq \sum_{k=i+1}^{|\eta^|} \P_{(n,0)}\left( E^{\neq k}_{\eta^i} \right) = \sum_{k=i+1}^{|\eta^i|}  O\left( n^{-\theta_1}\right) \cdot O\left( n^{-\theta_1}\right)  = O\left( n^{-2\theta_1}\right)
    \end{align*} 
\end{enumerate}

In summary, 
\begin{align*}
    \P_{(n,0)}\left( \bigcup_{\eta \in \mathcal{T}^{(1)} \cup \mathcal{T}^{(2)}} E_{\eta} \right ) & = 1 - \P_{(n,0)}\left( \left(\bigcup_{\eta \in \mathcal{T}^{(1)} \cup \mathcal{T}^{(2)}} E_{\eta} \right)^c \right) \\ 
    \geq &\, 1 -\sum_{j: \eta_j\not \in \mathcal{T}^{(2)}} O\left(n^{\alpha_{j-2} - \alpha_{j-1}}\right) - O\left(n^{\alpha_{j-3} - \alpha_{j-1}} \right) - O\left( n^{-2\theta_1}\right)   \geq 1- \frac{c_2}{n^{\theta_2}}, 
\end{align*}
where $\displaystyle \theta_2  = \min_{2\leq i\leq L: \left(\alpha_{i-1} - \alpha_{i-2}\right)>\theta_1} \left(\alpha_{i-1} - \alpha_{i-2}\right) \wedge 2\theta_1$. 

The proof of Theorem \ref{prop:cycle2} is complete.
\end{proof}

\section{Proof of the upper bound in Proposition \ref{prop:mean first hitting}} \label{sec:proof 3}

\begin{proof}[Proof of Proposition \ref{prop:mean first hitting}]
Let $X$ be the associated Markov chain for a reaction network $(\Sp,\C,\Re)$ given in \eqref{eq:cyclicmodel} with Assumption \ref{assump:cyclic}.
Then as \eqref{Def:Zi}, we use the same discrete-time 1-dimensional Markov chain 
\begin{align*}
Z(i):=
X_A(\nu_i)=X_A^D(\nu_i^D)  
\end{align*}
where 
$\nu_i$ are the same as \eqref{eq:StoppingDef}.
We will show that for $\tau^Z_{C}:=\inf \{i > 0: Z(i) \le C\}$ as in \eqref{Def:tauZ}
, we have $\mathbb E(\tau^Z_{C} | Z(0)=n) \le c n^\theta$ for some $c>0$. If this holds, \eqref{eq:mean first passage time} follows because of the following reasons: Let $X^D$ be the embedded Markov chain of $X$. 
\begin{enumerate}
    \item[i)] Note that under mass-action \eqref{mass}
    \begin{align*}
        \sup_{x\in \mathbb  Z^d_{\ge 0}} \frac{1}{\mathbb E \left (\sum_{i=0}^{L-1}\lambda_i(x) \right )} \le \frac{1}{k_0},
    \end{align*}
    where $\lambda_i$ is the intensity function of reaction $z_i\to z_{i+1}$.
    Therefore the average time of each transition of $X$ (i.e. exponential holding time) has a uniform upper bound for each state $x$. However, the transition of $X^D$ takes a unit time. Hence, up to a constant, the total transition time of $X$ to $\{x_1 \in\Z_{\ge 0} : x_1\le C\}$ takes a shorter time than that of $X^D$ for any $C>0$.   
    \item[ii)] To show i) more precisely, we let 
    \begin{align*}
     \tau^D_C=\inf\{i: \Vert X^D(i) \Vert_{2-1,\infty}=X^D_A(i)\le C\} \quad \text{for each $C\ge 0$}.    \end{align*}
    We also let $\Gamma$ be the collection of all paths starting from $(n,0)$ and arriving at some $x \in \{ x: \Vert x \Vert_{2-1,\infty}\le C\}$. Then letting $\mathbb E^D$ and $\mathbb E$ be the expectation under $X^D$ and $X$, respectively, we have
    \begin{align*}
        \E_{(n,0)}[\tau_C] &= \sum_{\eta\in \Gamma}\E_{(n,0)}[\tau_C|E_\eta]\,\P_{(n,0)}(E_\eta)
        \le C' \sum_{\eta\in \Gamma} \mathbb E^D_{(n,0)}[\tau^D_C | E_\eta ]\,\P_{(n,0)}(E_\eta)=C'\mathbb E^D[\tau^D_C],
    \end{align*}
    where we used the inequality $\E[\tau_C|E_\eta] \le C' \mathbb E[\tau^D_C|E_\eta]$ that comes from the fact that each transition of $X$ takes shorter time than the same transition of $X^D$ up to the constant $C'$.
    \item[iii)] By the definitions of $X^D$ and $Z$, we have that $\tau^{D}_{C} \leq \tau^Z_{C}$ almost surely under $\P_{(n,0)}$, whenever $0<C<n$. In other words, $X^D$ always hits $\{x \in \mathbb Z^2_{\ge 0} : \Vert x \Vert_{2-1,\infty} < C\}$ before $Z$ hits the set $\{x_1  \in\Z_{\ge 0}: x_1\le C\}$.
\end{enumerate}
Therefore, it suffices to show that there exits a constant  $c>0$ such that $\E[\tau^Z_{C} | Z(0)=n] \le c n^{\theta}$ for all large enough $n$.
To show this, we will show existence of constants $c_0,\,N_0\in(0,\infty)$ such that 
\begin{equation}\label{E:Lyapunov}
A^Z V(n)\le -c_0  \qquad \text{for }n\geq N_0,    
\end{equation}
where   $V(n)=n^{\theta}=n^{\theta_1+1}$ for $n\in\Z_{\ge 0}$ and $\mathcal A^Z$ is the generator of $Z$ such that $A^ZV(n)=\mathbb E_n(V(Z(1)))-V(n)$. 


To establish \eqref{E:Lyapunov}, in Lemma \ref{lem:transition prob of Z} we derived some estimates of the transition probabilities of $Z$, which give more precise bounds than those in Lemma \ref{L:2Dto1D}.
By Lemma \ref{lem:transition prob of Z}, there exists $\widetilde{N}\in\mathbb{N}$ such that
\begin{align}
\begin{split}\label{eq:increment of Z}
    & C_{*}:=\inf_{n\geq \widetilde{N}} n^{\theta_1}\E_{n}[(Z(0)-Z(1))\,1_{\{Z(1)-Z(0)<0\}}] >0, \\
    & \sup_{n\geq \widetilde{N}}\E_{n}[(Z(1)^{\theta_1+1}-Z(0)^{\theta_1+1})\,1_{\{Z(1)-Z(0)\geq K+1\}}] \to 0  \quad \text{as }K\to\infty,\\
    &\lim_{n\to \infty} n^{\theta_1}p_{n,n+k}=0 \quad \text{for each $ k\ge 1$,}    
\end{split}
\end{align}
where $\theta_1=\min_{i} \{\alpha_{i-1} - \alpha_{i-2}: 2\leq i \leq L\}$ as in Theorem \ref{prop:cycle}.
 Since $\theta_1\geq 0$, we have
\[
V(n+k)-V(n) \leq k(\theta_1+1)(n+k)^{\theta_1} \quad \text{and}\quad V(n-k)-V(n)\leq -k n^{\theta_1}
\]
for all $k,n\in\mathbb{N}$. This implies that there exist $\widetilde N'$ and a constant $c'$, which only depends on $\theta_1$, such that if $n\geq \widetilde N'$,
\begin{align*}
    \mathcal A^Z V(n) &= \E_{n}[(V(Z(1))-V(Z(0)))\,1_{\{Z(1)<Z(0)\}}]+\E_{n}[(V(Z(1))-V(Z(0)))\,1_{\{Z(1)-Z(0)\geq K+1\}}]  \\
    &\quad +\E_{n}[(V(Z(1))-V(Z(0)))\,1_{\{1\leq Z(1)-Z(0)\leq K\}}]  \\
    &\leq -c'n^{\theta_1}\E_{n}[(Z(0)-Z(1))\,1_{\{Z(1)<Z(0)\}}]+\E_{n}[(V(Z(1))-V(Z(0)))\,1_{\{Z(1)-Z(0)\geq K+1\}}]  \\
    &\quad + K(\theta_1+1)(n+K)^{\theta_1}\P_{n}(1\leq Z(1)-Z(0)\leq K),
\end{align*}
for all $K\ge 1$.
Hence, by the first inequality of \eqref{eq:increment of Z}, for $K\geq 1$ and $n\geq \max\{\widetilde{N},K,\widetilde N'\}$,
\begin{align}
    \mathcal A^Z V(n) 
    &\leq -c'C_{*} \,+\,\E_{n}[(V(Z(1))-V(Z(0)))\,1_{\{Z(1)-Z(0)\geq K+1\}}]  + K(\theta_1+1)(2n)^{\theta_1}\,\sum_{k=1}^Kp_{n,n+k}.
\end{align}
By the second inequality of \eqref{eq:increment of Z}, we can choose 
 $K$ large enough (depending only on $C_{*}$) so that the second term is less than $c'C_{*}/4$ for all $n\geq \widetilde{N}$. For this $K$, there exists $\widetilde{N}_K$ such that the last term  is less than $c'C_{*}/4$ for $n\geq \widetilde{N}_K$, by the last inequality of \eqref{eq:increment of Z}.
We have shown that \eqref{E:Lyapunov} holds with $c_0=c'C_{*}/2$ and $N_0=\max\{\widetilde{N},K,\widetilde{N}_K\}$.

Finally, we derive the desired result by using the discrete Dynkin's formula for $Z$ (see \cite[Section 4.3]{meyn1992stability}). Precisely,
for each $j>C$,
\begin{align}
    \mathbb E_j\left [V(Z(\tau^Z_{C})) \right ]
    &= V(j)  + \mathbb E_j \left [ \sum_{i=0}^{\tau^Z_{C}-1}\mathcal A^Z V(Z(i)) \right ]. \label{eq:discrete dynkins}
\end{align}

By \eqref{E:Lyapunov}, if $n\ge N_0>C$, then   
    \begin{align*}
        0\le \mathbb E_n\left [V(Z(\tau^Z_{C})) \right ]&=
        V(n)  + \mathbb E_{n} \left [ \sum_{i=0}^{\tau^Z_{C}-1}\mathcal A^Z V(Z(i)) \right ] \\
        &\le   n^{\theta} -c_0\mathbb E_n \left [\tau^Z_{C} \right ],      
    \end{align*}
    which completes the proof with $c=\frac{1}{c_0}$.
\end{proof}

To complete the proof of Proposition \ref{prop:mean first hitting}, 
 we establish the three statements in \eqref{eq:increment of Z}. 
\begin{lemma}\label{lem:transition prob of Z}
Under the same conditions in Proposition \ref{prop:mean first hitting},  let $X$ be the associated Markov chain. Let $Z$ be the 1-dimensional embedded Markov chain defined as \eqref{Def:Zi} with the probability $p^Z_{j,i}$ of the transition from state $i$ to state $j$ for $Z$. Then  
there exist $\widetilde{N}_1 \in \mathbb{N}$ and $ \widetilde{N}_2\in\mathbb{N}$ such that
\begin{align}
    &\inf_{n\geq \widetilde{N}_1} n^{\theta_1}\E_{n}[(Z(0)-Z(1))\,1_{\{Z(1)-Z(0)<0\}}] >0, \label{eq:appendix 1}\\
    & \sup_{n\geq \widetilde{N}_2}\E_{n}[(Z(1)^{\theta_1+1}-Z(0)^{\theta_1+1})\,1_{\{Z(1)-Z(0)\geq K+1\}}] \to 0  \quad \text{as }K\to\infty, \label{eq:appendix 2}\\
    &\lim_{n\to \infty} n^{\theta_1}p_{n,n+k}=0 \quad \text{for each $ k\ge 1$.}    \label{eq:appendix 3} 
\end{align}
\end{lemma}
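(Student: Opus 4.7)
The plan is to read off all three claims from the dominant-path structure established in the proof of Theorem \ref{prop:cycle2}. Set $g_j := \alpha_j - \alpha_{j-1}$ (with $\alpha_0 = 0$) so that $\theta_1 = \min\{g_1,\ldots,g_{L-1}\}$. Recall $\mathcal{T}^{(2)} = \{\eta^i : g_{i-1} = \theta_1,\; 2 \le i \le L\}$ with $A$-changes $x_A^{\eta^i} = g_{i-1} - g_i$ for $2 \le i \le L-1$ and $x_A^{\eta^L} = g_{L-1} - g_1$. For any minimizing index $i^*$, the minimality of $g_{i^*-1}$ yields $x_A^{\eta^{i^*}} \le 0$, and Assumption \ref{assump:cyclic}(1) (which rules out $g_{i-1} = g_i$ for $2 \le i \le L-1$ and $g_{L-1} = g_1$) upgrades this to $x_A^{\eta^{i^*}} \le -1$, since these are integers. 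The computation in the proof of Theorem \ref{prop:cycle2} also gives a matching lower bound $\P_n(E_{\eta^{i^*}}) \ge c\, n^{-\theta_1}$ (the second-most-likely reaction at position $i^*$ has probability of order $n^{-g_{i^*-1}} = n^{-\theta_1}$), so
\[
n^{\theta_1}\,\E_n\!\bigl[(Z(0)-Z(1))\,1_{\{Z(1)-Z(0)<0\}}\bigr]\;\ge\;n^{\theta_1}\,|x_A^{\eta^{i^*}}|\,\P_n(E_{\eta^{i^*}})\;\ge\;c>0
\]
for all $n$ large enough, proving \eqref{eq:appendix 1}.

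For \eqref{eq:appendix 3}, I would partition the first excursion of $X$ from $(n,0)$ according to whether it lies in $E_{\eta^0}$, in $\bigcup_{i^*:\, g_{i^*-1}=\theta_1} E_{\eta^{i^*}}$, or in the complement. For $k \ge 1$ the first class contributes nothing to $p_{n,n+k}$ because $x_A^{\eta^0}=0$; the second contributes nothing by the strict negativity $x_A^{\eta^{i^*}} \le -1$ above; and the complement has total probability at most $c_2\,n^{-\theta_2}$ by Assumption \ref{assump}(ii) as verified in the proof of Theorem \ref{prop:cycle2}. Hence $p_{n,n+k} \le c_2\,n^{-\theta_2}$, and since $\theta_2 > \theta_1$ we obtain $n^{\theta_1}\,p_{n,n+k} \le c_2\,n^{\theta_1-\theta_2} \to 0$.

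Statement \eqref{eq:appendix 2} is the main obstacle, since it requires uniform tail control rather than a bound at a single fixed jump size. I would bound $|J| := |Z(1)-Z(0)| \le C_A\,M$, where $M$ is the number of reactions fired during one excursion and $C_A := \max_\ell|\alpha_\ell - \alpha_{\ell-1}|$. Because $\beta_\ell=\ell$ by Assumption \ref{assump:cyclic}(2), balancing $B$ during one excursion forces $M = L\,c_{L-1}$, where $c_{L-1}$ is the number of firings of the return reaction $z_{L-1}\to z_0$. At every state $(x_1,j)$ with $j \ge L-1$ visited during the excursion, the probability of firing the return reaction is $1 - O(x_1^{-\theta_1'})$ with $\theta_1' := g_{L-1} \ge \theta_1$, so (after controlling the drift of $x_1$) the strong Markov property yields
\[
\P_n(c_{L-1} \ge m) \;\le\; C\,n^{-(m-1)\theta_1'} \quad \text{for all } m \ge 2,
\]
uniformly for $n$ large. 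Expanding $(n+J)^{\theta_1+1} - n^{\theta_1+1}$ as a polynomial in $n$ and $J$ and substituting $|J| \le C_A\,L\,c_{L-1}$ then gives
\[
n^{\theta_1}\,\E_n\!\bigl[(Z(1)^{\theta_1+1} - Z(0)^{\theta_1+1})\,1_{\{J \ge K+1\}}\bigr]\;\le\;C'\!\!\sum_{m \ge \lceil K/(L C_A)\rceil}\!\! m^{\theta_1+1}\,n^{\theta_1-(m-1)\theta_1'},
\]
whose right-hand side vanishes as $K \to \infty$ uniformly in $n \ge \widetilde{N}_2$, because $\theta_1' \ge \theta_1$ makes every exponent non-positive and the sum is tail-summable.

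The delicate step is maintaining the $O(x_1^{-\theta_1'})$ overshoot bound despite fluctuation of $x_1$ during a potentially long excursion. I would handle this by introducing an auxiliary stopping time at which $x_1$ first drops below $n/2$, applying the geometric-type estimate above up to this stopping time, and absorbing the rare event that $x_1$ drifts substantially into the $O(n^{-\theta_2})$ error already provided by Theorem \ref{prop:cycle2}. With this bookkeeping, the three statements follow from the excursion analysis already developed.
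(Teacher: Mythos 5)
Your proposal is correct and follows essentially the same route as the paper: part (1) via the lower bound $\P_n(E_{\eta^{i^*}})\gtrsim n^{-\theta_1}$ on the dominant negative-drift path together with $x_A^{\eta^{i^*}}\le -1$ forced by Assumption \ref{assump:cyclic}(1), part (3) via Assumption \ref{assump}(ii) and $\theta_2>\theta_1$, and part (2) by relating the jump size to the number of firings of the return reaction $z_{L-1}\to z_0$ and paying a factor $O(n^{-\theta_1})$ per extra firing. The only caveat is in part (2): the paper executes the same idea with an explicit combinatorial count (binomial coefficients plus Stirling, yielding a factor $c^{m}$ that is absorbed by choosing $\widetilde N_2$ with $\widetilde N_2^{\theta_1}>c$), and your bound $\P_n(c_{L-1}\ge m)\le C\,n^{-(m-1)\theta_1'}$ really carries a constant of the form $C^{m-1}$, so you should make that dependence explicit to confirm the tail sum is genuinely uniform in $n\ge\widetilde N_2$.
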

\begin{proof}
\textbf{Step 1:}   We begin with showing \eqref{eq:appendix 1}. Let $x_A^{\eta^i}=\left(\sum_{j=1}^{|\eta^i|}\eta^i_j\right )_A$ be the accumulated changes of species $A$ by the path $\gamma_{\eta^i}$ such that
\begin{align}\label{eq:about d}
    x_A^{\eta^i}=\begin{cases}
      2\alpha_{i}-\alpha_{i+1}- \alpha_{i-1} = (\alpha_{i-1}-\alpha_{i-2})-(\alpha_i-\alpha_{i-1}) \quad &\text{if $2\le i<L$}\\
      \alpha_{L-1}-\alpha_{L-2}-(\alpha_1 -\alpha_0)& \text{if $i=L$}
    \end{cases}
\end{align}

Let $i\in I:=\{i=\displaystyle \argmin_{2\le i \le L}\{\alpha_{i-1}-\alpha_{i-2}\} \}$ be fixed. Suppose that $i\neq L$. Then, by considering the path $(n,0)+\gamma_{\eta^i}$ as a special case of all the possible paths from $(n,0)$ to $(n+d_i,0)$ for $X^D$, we have $p^Z_{n,n+d_i}\ge \mathbb  P_{(n,0)}(E_{\eta^i})=\prod_{j=1}^L q_j$, where
   \begin{align}
       &q_j= \label{eq:q_j}\\
       &\begin{cases}
           P\left (X^D(j)-X^D(j-1)=z_j-z_{j-1} \ \big | \ X^D(j-1)=(n+\alpha_{j-1},\beta_{j-1}) \right )  &\text{ if $1\le j\le i-1$}\\
           {}\\
            P\left (X^D(i)-X^D(i-1)=z_{i-1}-z_{i-2} \ \big | \ X^D(i-1)=(n+\alpha_{i-1}, \beta_{i-1}) \right ) &\text{ if $j=i$}\\
            {}\\
           P\left (X^D(j)-X^D(j-1)=z_j-z_{j-1} \ \big | \ X^D(j-1)=(n+\alpha_{j-1}+d_{i}, \beta_{j-1}) \right ) &\text{ if $i<j\le L$}
       \end{cases}\notag
   \end{align}
Then 
\begin{align}
    q_j&\ge P\left (\text{reaction $z_{j-1} \to z_j$ fires at $(n+\alpha_{j-1},\beta_{j-1})$ } \right ) =\frac{\lambda_{j-1}(n+\alpha_{j-1},\beta_{j-1})}{\sum_{k=0}^{j-1}\lambda_k(n+\alpha_{j-1},\beta_{j-1})} \quad \text{for $j<i$, and} \notag \\
    q_j&\ge P\left (\text{reaction $z_{i-1} \to z_i$ fires at $(n+\alpha_{i-1}+d_{j_0},\beta_{i-1})$ } \right ) =\frac{\lambda_{j-1}(n+\alpha_{i-1}+d_{j_0},\beta_{i-1})}{\sum_{k=0}^{j-1}\lambda_k(n+\alpha_{i-1}+d_{j_0},\beta_{i-1})} \quad \text{for $j>i$}. \notag 
\end{align}
Furthermore, 
\begin{align*}
    q_{i} &\ge P\left (\text{reaction $z_{j-1} \to z_j$ fires at $(n+\alpha_{i-1},\beta_{i-1})$ } \right ) =\frac{\lambda_{i-2}(n+\alpha_{i-1},\beta_{i-1})}{\sum_{k=0}^{i-1}\lambda_k(n+\alpha_{i-1},\beta_{i-1})}.
\end{align*}
Therefore we have $q_j=O(1)$ for $j\neq i$ and $q_i=O(1/n^{\alpha_{i-1}-\alpha_{i-2}})$ with respect to $n$. Therefore for any $i\in I \setminus \{L\}$, there exist $\widetilde N_1$ and $c_1>0$ such that if $n\ge \widetilde N_1$ then 
\begin{align}\label{eq:low bound of p^Z for T2}
    p^Z_{n,n+d_i} \ge \dfrac{c_1}{n^{\alpha_{i-1}-\alpha_{i-2}}}.
\end{align}
In the same way, if $i \in I\cap \{L\}$, we can show that \eqref{eq:low bound of p^Z for T2}.

Then as we showed in the proof of Lemma \ref{lem:non unique}, the accumulated change $ x_A^{\eta^i}<0$ if $i\in I$ by \eqref{eq:about d} by condition 1 of Assumption \ref{assump:cyclic}. Furthermore if $i\in I$, then $\alpha_{i-1}-\alpha_{i-2}=\theta_1$ by the definition of $\theta_1$
 Hence, $\lim_{n\to \infty}n^{\theta_1} p^Z_{n,n+d_i}>0$ by \eqref{eq:low bound of p^Z for T2} . Then we finally have that 
\begin{align*}
    &\lim_{n\to \infty} n^{\theta_1}\E_{n}[(Z(0)-Z(1))\,1_{\{Z(1)-Z(0)<0\}}] \ge 
    \inf_{n\geq \widetilde{N}_1} \sum_{\{i: d_i<0\}}n^{\theta_1}  \E_{n}[-d_i\ 1_{E_{\eta^{i}}}] \\
    &\ge \lim_{n\to \infty} \sum_{i\in I}n^{\theta_1}  \E_{n}[-d_i \ 1_{E_{\eta^{i}}}] \ge -\lim_{n\to \infty}  \sum_{i\in I} n^{\theta_1}d_{i}p^Z_{n,n+d_{i}}> 0.
\end{align*}
Therefore \eqref{eq:appendix 1} holds with $\widetilde N_1$.

  \noindent \textbf{Step 2:}    Now, we show \eqref{eq:appendix 2}. We define three events of $X^D$:
    \begin{align*}
        U^1_n&=\{\text{reaction $z_{i-1} \rightarrow z_{i}$ fires for some $1\le i\le L-1$ at some $x$ such that $x_A\ge n$ and $x_B\ge \beta_{L-1}$}\},\\
        U^2_n&=\{\text{reaction $z_{i-1} \rightarrow z_i$ fires for some $1\le i\le L-1$ at some $x$ such that $x_A\ge n$ and $x_B< \beta_{L-1}$}\},\\
        U^3_n&=\{\text{reaction $z_{L-1}\to z_{0}$ at some $x$ such that $x_A\ge n$}\}.        
    \end{align*}
    Note that there exist $\widetilde N'_2$ and $c_2>0$ such that if $n \ge \widetilde N'_2$, then
    \begin{align}\label{eq:U1}
        P(U^1_n)&\le \max_{\{x:x_A\ge n, x_B\ge \beta_{L-1}\}}\max_{0\le j\le L-2}\frac{\lambda_j(x)}{\sum^{L-1}_{i=1}\lambda_i(x)} \le \frac{c_2 }{n^{\alpha_{L-1}-\alpha_{L-2}}} \le\frac{c_2 }{n^{\theta_1}}.
    \end{align}
Let $k\ge 1$ be a fixed integer.  Let $M_1, M_2$, and $M_3$ be the numbers of the events $U^1_n, U^2_n$, and $U^3_n$ that have occurred within $[0,\nu^D_1)$ conditioned on $X^D(\nu^D_1)=(n+k,0)$, where $\nu^D_1=\inf\{i > 0:X^D(i)_B=0\}$.
There are important remarks pertaining to $M_i$'s. 
\begin{enumerate}
\item The only way to remove species $B$ is the reaction $z_{L-1}\to Z(0)$. Hence the last event must be $U^3_n$ right before $X$ hits the $A$-axis.
    \item   After the $U^3_n$ event fires at state $x$ such that $x_B\le \beta_{L-1}$ (actually it can only fire when $x_B=\beta_{L-1}$ in this case), $X^D$ will immediately hit the $A$-axis. Hence for $M_3$ times of the $U^3_n$ events before $X^D$ hits the axis, $B$ species has to be produced at least $M_3-1$ times when the number of $B$ species is bigger than or equal to $\beta_{L-1}$. This means that the $U^1_n$ events have to occur at least $M_3-1$ times. Consequently we have $M_1 \ge M_3 -1$.
   \item Between two $U^3_n$ events, at most $L-2$ times of $U^2_n$ events can occur. This is because after $L-2$ times of $U^2_n$ events without any $U^3_n$ events, $X^D_B$ is greater than $\beta_{L-1}$, hence $U^2_n$ events can no longer occur and the only way to decrease $X^D_B$ is the $U^3_n$ event. Hence $M_2 \le (L-2)(M_3-1)\le (L-2)M_1$. 
   \item Note that $k\le \rho M_1+\rho M_2-\alpha_{L-1}M_3\le \rho M_1+\rho(L-2)M_1$ where $\rho=\max\{\alpha_i-\alpha_{i-1}\}$. Hence $M_1\ge \lfloor \frac{k}{\rho(L-1
   )} \rfloor$.
\end{enumerate}
Let $k_\rho=\lfloor \frac{k}{\rho(L-1)} \rfloor$.
Consequently, if $M_1=m+ k_\rho$ for some $m\in \mathbb N$, then $M_2\le (L-2)(m+ k_\rho)$ and $M_3\le m+k_\rho+1$. Therefore
\begin{align}
    &p_{n,n+k}=P(X^D_{\nu^D_1}=(n+k,0)|X^D(0)=(n,0))  \notag\\
    &\le \sum_{m=0}^\infty \sum_{\ell=1}^{(L-2)(m+k_\rho)}\sum_{j=1}^{m+k_\rho+1} \begin{pmatrix}
        m+k_\rho+\ell+j \\
        m+k_\rho
\end{pmatrix}
\begin{pmatrix}
        \ell+j \\
        \ell
    \end{pmatrix}
P(M_1=m+k_\rho)
P(M_2=\ell)
P(M_3=j)  \notag \\
&\le \sum_{m=1}^\infty \sum_{\ell=1}^{(L-2)(m+k_\rho)}\sum_{j=1}^{m+k_\rho+1} \frac{(m+k_\rho+\ell+j)!}{(m+k_\rho)!\ell! j!} 
P(M_1=m+k_\rho).\label{eq:using U}
\end{align}
Using induction, for $\ell \le (L-2)(m+k_\rho)$ and $j\le m+k_\rho+1$, we can show that
$\dfrac{(m+k_\rho+\ell+j)!}{(m+k_\rho)!\ell! j!} \le \dfrac{(L(m+k_\rho)+1)!}{(m+k_\rho)! ((L-2)(m+k_\rho))! (m+k_\rho+1)!}$. That is, the combinatorial term is maximized with the largest possible values of $\ell$ and $j$. Using Stirling's formula we have that $\dfrac{(L(m+k_\rho)+1)!}{(m+k_\rho)! ((L-2)(m+k_\rho))! (m+k_\rho+1)!}\le c^{m+k_\rho}$ for some $c>1$ independent of $k$ and $m$. Hence using this and \eqref{eq:U1}, we can derive from \eqref{eq:using U} that there exist constants $c_3>0$ and  $\widetilde N'_2$  such that if $n>\tilde  N'_2$, then 
\begin{align*}
    p_{n,n+k}\le \sum_{m=1}^\infty (L-2)(m+k_\rho)(m+k_\rho+1) \frac{c^{m+k_\rho}c_2}{n^{\theta_1(m+k_\rho)}} \le \frac{c_3 c^{k_\rho}}{n^{\theta_1 k_\rho}},
\end{align*}
where we choose $\widetilde N'_2$ such that $(\widetilde N'_2)^{\theta_1}>c$ independently from $k$. 
Now, 
\begin{align}
  & \sup_{n\geq \widetilde{N}_2}\E_{n}[(Z(1)^{\theta_1+1}-Z(0)^{\theta_1+1})\,1_{\{Z(1)-Z(0)\geq K+1\}}] \notag \\
  &=   \sup_{n\geq \widetilde{N}_2} \sum_{k\ge K+1}\E_{n}[(Z(1)^{\theta_1+1}-Z(0)^{\theta_1+1})\,1_{\{Z(1)-Z(0)=k\}}]
   \notag \\
   &=\sup_{n\geq \widetilde{N}_2} \sum_{k\ge K+1}\left ((n+k)^{\theta_1+1}-n^{\theta_1+1} \right )p_{n,n+k} \notag\\
    &\le  \sum_{i=1}^{\theta_1+1} \begin{pmatrix}
    \theta_1+1\\
    i
\end{pmatrix}
\left( \sup_{n\geq \widetilde{N}_2} \sum_{k\ge K+1}
\frac{c_3 k^i c^{k_\rho}}{n^{\theta_1 k_\rho -(\theta_1+1-i)}} \right ) \notag \\
&\le \sum_{i=1}^{\theta_1+1} 
\begin{pmatrix}
    \theta_1+1\\
    i
\end{pmatrix}
\left( \sup_{n\geq \widetilde{N}_2} \sum_{k\ge K+1}
\frac{c_3 k^i c^{k}}{n^{\theta_1\frac{k}{\rho(L-1)}-(\theta_1+1-i)}} \right ). \label{eq:for the second one of appendix}
\end{align}
Finally  as $\dfrac{\theta_1}{\rho(L-1)}-\dfrac{(\theta_1+1-i)}{k}$ is increasing function of $k$, we can set $\widetilde N_2 > \tilde N'_2$ such that 
\begin{align*}
   \left( \frac{1}{n^{\frac{\theta_1}{\rho(L-1)}-\frac{(\theta_1+1-i)}{k}}} \right ) \le \frac{1}{2c}
\end{align*}
for any $k\ge 1$
if $n\ge \widetilde N_2$. Hence by \eqref{eq:for the second one of appendix}, 
\begin{align*}
 \sup_{n\geq \widetilde{N}_2}\E_{n}[(Z(1)^{\theta_1+1}-Z(0)^{\theta_1+1})\,1_{\{Z(1)-Z(0)\geq K+1\}}]   
 \le \sum_{i=1}^{\theta_1+1} 
\begin{pmatrix}
    \theta_1+1\\
    i
\end{pmatrix}
\left( \sup_{n\geq \widetilde{N}_2} \sum_{k\ge K+1}
\frac{c_3 k^i c^{k}}{(2c)^k} \right  ), 
\end{align*}
which goes to $0$, as $K\to \infty$. 

\noindent \textbf{Step 3:} Lastly for \eqref{eq:appendix 3}, we note that $ x_A^{\eta^i}$'s defined in \eqref{eq:about d} are negative if $i\in I$ (i.e. $\eta^i \in \mathcal T^2$). Hence
if $x\in J^{(2)}$, then $x_B < 0$ where $J^{(2)}$is as in Definition \ref{Def:J2}.  Then by \eqref{eq:reduction_assumption},  there exists $c_3>0$ such that $p_{n,n+k} \le \frac{c_3}{n^{\theta_1+1}}$ for any $k>0$. Hence \eqref{eq:appendix 3} follows.
\end{proof}

\begin{remark}
    The probability that we used for $q_j$ in \eqref{eq:q_j} is the transition probability given by $\alpha_{i-1}A+\beta_{i-1}B \to \alpha_i A+ \beta_i B$ after escape from the path $(n,0)+\gamma_{\eta^0}$. Here we used the fact that $\beta_i=i$ for each $i$. 
\end{remark}

\appendix

\section{Table of notations}\label{app:table}

\begin{center}
\vspace{0.2cm}
\begin{tabular}{|c|c|}
Symbol & Meaning \\
\hline
     $\S, \C, \Re$ & set of species, complexes and reactions, respectively ( Definition \ref{def:21}) \\ [1mm]
      $d$ & number of species\\ [1mm]
     $\Z^d$, $\R^d$ & $d$-dimensional integer and real vectors \\ [1mm]
    $\Z^d_{\ge 0}$ & $\{x \in \Z^d : x_i \ge 0 \text{ for each $i$} \}$\\ [1mm]
    $\R^d_{\ge 0}$ & $\{x \in \R^d : x_i \ge 0 \text{ for each $i$} \}$\\ [1mm]
    $\mathbb{I}_n$ & $\{ x\in \Z_{\ge 0}^d: x_d = 0, \|x\|_\infty = n\}$ where $\|x\|_\infty=\max_{1\leq i\leq d}x_i$ \\ [1mm]
    $\|x\|_{d-1,\infty}$ & $\max_{1\leq i\leq d-1}|x_i|$ (maximum among the first $d-1$ coordinates)  \\ [1mm]
    $\Lambda_n$ & $\{ x\in \Z_{\ge 0}^d:  \|x\|_{d-1,\infty} > n/2\}$ where $\|x\|_\infty=\max_{1\leq i\leq d-1}x_i$ \\ [1mm]
    $X_i(t)$ & the count of $i$ th species at time $t$ for the continuous time Markov chain  \\ [1mm]
    $X^D_i(k)$ & the count of $i$ th species after $k$-th transition of embedded chain\\ [1mm]
    $Z(i)$ & location of the $i$ th visit to the boundary face $\{x_d=0\}\cap \Z_{\ge 0}^d$ \eqref{Def:Zi} \\ [1mm]
    $q(x,z)$ & transition rate of continuous time Markov chain from $x$ to $z$ \\ [1mm]
    $\eta$ & a sequence of $d$-dimensional integer vectors with length $|\eta|$ \\ [1mm]
    $\gamma_\eta$ & a directed path starting at $0\in \Z^d$ that have increments $(\eta_i)_{i=1}^{|\eta|}$ \eqref{Def:gammaeta}\\ [1mm]
    $E_\eta$ & the event that $X^D$ follows $\eta$ for the first $|\eta|$ steps \eqref{eq:dominantcycle}\\ [1mm]
     $E_\eta^{\not= i}$ & the event that $X^D$ exits the path  $X^D(0)+\gamma_\eta$ at the $i$-th transition  \eqref{eq:E not i} \\ [1mm] 
    $\mathcal{T}^{(1)}, \mathcal{T}^{(2)}$ & sets of sequences of transitions in $\Z^d$ with finite lengths in Assumption \ref{assump}\\ [1mm]
    $\nu_i, \mu_i$ & $i$ th visit and exit time from the boundary face $\{x_d=0\}\cap \Z_{\ge 0}^d$ \eqref{eq:StoppingDef}\\ [1mm]
    $t_{\rm mix}^\delta(x)$ & Mixing time of $X$ starting at $x$ with threshold $\delta$ \eqref{E:mixing}\\ [1mm]
    $\tau_C$ &  first passage time of the continuous time Markov chain \eqref{Def:FPT}\\ [1mm]
    $\mathbb{P}_x$ & probability measure under which $X(0)=x$  \\ [1mm]
    $P^t(x,y)$ & probability of $X(t)=y$ under $\mathbb{P}_x$  \\[1mm]
    $p(\cdot,t)$ & probability density function of $Z(t)$ 
    \\[1mm]
    $\pi_x$ & stationary distribution of  $X$ (on the communication class containing $x$) \\ 
    $\alpha_i,\beta_i$ & coefficients for linear combinations in complexes in \eqref{eq:cyclicmodel} \\
    $\lambda_k$ & reaction intensity of $X$ associated with the $k$-th reaction \\
    $\kappa_i$ & rate constant associated with the $k$-th reaction \eqref{mass} \\
\end{tabular}
\end{center}

\section{Stationary distribution of stochastic reaction network \eqref{eq:cyclicmodel}}\label{app:stationary}
For a reaction network of a cyclic form as \eqref{eq:cyclicmodel}, the state space of the associated Markov chain under mass-action \eqref{mass} is a union of closed communication classes \cite{pauleve2014dynamical}. Hence the Markov chain associated with a reaction network of the form given in equation \eqref{eq:cyclicmodel} may have a state space that consists of multiple closed communication classes in general, instead of just one communication class. Therefore, to obtain the lower bound
of the mixing times of the reaction network using Corollary \ref{cor:mixing}, we need to show the property mentioned near Assumption \ref{A:Positiveecurrent}. 
In this section, for a fixed reaction network $(\Sp,\C,\Re)$ of a form of \eqref{eq:cyclicmodel}, we denote by $\mathbb S_n$ the closed communication class containing $(n,0)$.

\begin{lemma}\label{lem:non unique}
     Let $X$ be the associated continuous-time Markov process for a reaction system $(\Sp,\C,\Re)$ of the form given by \eqref{eq:cyclicmodel}. Suppose that $(\Sp,\C,\Re)$ is complex balanced \eqref{E:complexBalance} with the choice of the parameters $\kappa_i$'s.  If condition 1 of Assumption \ref{assump:cyclic} holds, then 
    there exists a unique stationary distribution $\pi_n$  on $\mathbb S_n$ for each $n\geq 1$ such that 
    $\displaystyle \lim_{n\to \infty} \pi_n(\Lambda_n)=0$ for $\Lambda_n = \{x \in\Z_{\ge 0}^2:\, x_A > \frac{n}{2}\} \cap \mathbb S_n$.
\end{lemma}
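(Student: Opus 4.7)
My plan is to combine Theorem \ref{thm:def0} with a direct Poisson-tail estimate on $\pi_n(\Lambda_n)$, after showing that $\mathbb S_n$ must contain at least one state on the $A$-axis whose $A$-coordinate is bounded independently of $n$.

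Existence and uniqueness of $\pi_n$ are immediate from Theorem \ref{thm:def0}: since $(\Sp,\C,\Re)$ is complex balanced with steady state $c=(c_A,c_B)$, on each closed communication class the stationary distribution exists, is unique, and has the product-Poisson form
\[
\pi_n(x) \;=\; \frac{1}{Z_n}\,\frac{c_A^{x_A}\,c_B^{x_B}}{x_A!\,x_B!},\qquad Z_n \,:=\sum_{x\in \mathbb S_n}\frac{c_A^{x_A}\,c_B^{x_B}}{x_A!\,x_B!}.
\]
Consequently
\[
\pi_n(\Lambda_n)\;\le\;\frac{1}{Z_n}\sum_{x_A>n/2}\sum_{x_B\ge 0}\frac{c_A^{x_A}\,c_B^{x_B}}{x_A!\,x_B!}\;=\;\frac{e^{c_B}}{Z_n}\sum_{k>n/2}\frac{c_A^k}{k!},
\]
and the outer sum tends to zero as $n\to\infty$, being an end-tail of the convergent exponential series. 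What remains is to bound $Z_n$ from below by a positive constant independent of $n$. For this it suffices to exhibit, for all $n$ large, a state $(m_n,0)\in\mathbb S_n$ with $m_n\le M$ for some fixed $M$, since then $Z_n\ge c_A^{m_n}/m_n!\ge \min_{0\le m\le M} c_A^m/m!>0$.

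The main obstacle is producing such a low-$x_A$ state in $\mathbb S_n$, and this is where condition 1 of Assumption \ref{assump:cyclic} is used. I would use the excursions $\eta^i$ constructed in the proof of Theorem \ref{prop:cycle2}, whose net change in the $A$-coordinate $x_A^{\eta^i}$ is computed in \eqref{eq:about d}. A direct telescoping calculation yields
\[
\sum_{i=2}^{L}x_A^{\eta^i}\;=\;\sum_{i=2}^{L-1}\bigl(2\alpha_{i-1}-\alpha_{i-2}-\alpha_i\bigr)\;+\;\bigl(\alpha_{L-1}-\alpha_{L-2}-\alpha_1\bigr)\;=\;0,
\]
which, combined with the fact that condition 1 of Assumption \ref{assump:cyclic} makes each summand nonzero, forces at least one of the $x_A^{\eta^i}$ to be strictly negative; call it $x_A^{\eta^{i^*}}<0$.

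To finish, I would observe that each reaction appearing in $\eta^{i^*}$ has positive mass-action rate whenever its reactant complex is present, so the entire sequence $\eta^{i^*}$ is fireable with positive probability from any state $(m,0)$ with $m\ge C$, where the constant $C$ depends only on $\max_j\alpha_j$ and $\max_j\beta_j$ (not on $n$). Iterating $\eta^{i^*}$ starting from $(n,0)$ then produces a chain of reachable states $(n+k\,x_A^{\eta^{i^*}},0)$ for $k=0,1,\ldots,k_{\max}$, where $k_{\max}$ is the largest index with $n+k\,x_A^{\eta^{i^*}}\ge C$. Since $\mathbb S_n$ is a closed communication class containing $(n,0)$, every such reachable state lies in $\mathbb S_n$; in particular, setting $m_n:=n+k_{\max}\,x_A^{\eta^{i^*}}$ gives $m_n\le C+|x_A^{\eta^{i^*}}|=:M$, which completes the lower bound on $Z_n$ and hence the proof.
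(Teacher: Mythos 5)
Your proposal is correct and follows essentially the same route as the paper's proof: existence, uniqueness and the product-Poisson form come from Theorem \ref{thm:def0}, the tail bound on $\pi_n(\Lambda_n)$ is reduced to a uniform lower bound on the normalizing constant, and that lower bound is obtained by iterating a negative-drift excursion $\eta^{i^*}$ from $(n,0)$ to reach a state $(m_n,0)\in\mathbb S_n$ with $m_n$ bounded independently of $n$. The only minor difference is how you identify a strictly negative $x_A^{\eta^{i^*}}$ — via the telescoping identity $\sum_{i=2}^{L}x_A^{\eta^i}=0$ combined with condition 1, whereas the paper takes $i^*=\argmin_{2\le i\le L}(\alpha_{i-1}-\alpha_{i-2})$ so that $x_A^{\eta^{i^*}}\le 0$ and then $<0$ by condition 1; both arguments are valid.
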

   \begin{proof}
       Note that due to complex balancing of $X$ \eqref{E:complexBalance}, there exists a unique stationary distribution $\pi_n$ on $\mathbb S_n$ such that
       \begin{align*}
           \pi_n(x_A,x_B)=\begin{cases}
               M_n\dfrac{c^x}{x_A! x_B!} \quad &\text{if $x\in \mathbb S_n$, and} \\
               0 &\text{otherwise},
           \end{cases}
       \end{align*}
       by Theorem \ref{thm:def0}, where $M_n$ is the normalizing constant. Note that if we show $\displaystyle \sup_{n}M_n < \infty$, then 
       \begin{align*}
           \dlim_{n\to \infty} \pi_n (\Lambda_n)\le \dlim_{n\to\infty} M_n\sum_{\{k\ge n/2 : x_A=k \text{ for some $x\in\mathbb S_n $} \}}\dfrac{c_A^k}{k!} \sum_{\ell=1}^\infty \frac{c_B^\ell}{\ell!}=0.
       \end{align*}
       To show $\displaystyle \sup_{n}M_n < \infty$, we will show the existence of a constant $C$ such that 
       \begin{align}\label{eq:min of the comm class}
           \sup_n \left ( \displaystyle \min_{x\in \mathbb S_n, x_B=0} x_A \right ) < C.
       \end{align}
       If such a $C$ exists, for any $n$ we can select a state $x\in \mathbb S_n$ satisfying that $x_A<C$ and $x_B=0$. Let such $x$ be denoted by $x(n)$. Then we can find $C'>0$ such that for any $n$, $\dfrac{c_A^{x(n)_A}c_B^{x(n)_B}}{x(n)_A! x(n)_B!} > C'$, which implies that 
       \begin{align*}
           \sup_{n}M_n = \sup_n \frac{1}{\sum_{x\in \mathbb S_n}\frac{c_A^{x_A}c_B^{x_B}}{x_A! x_B!}} \le   \sup_{n} \frac{1}{C'+\sum_{x\in \mathbb S_n\setminus\{x(n)\}}\frac{c_A^{x_A}c_B^{x_B}}{x_A! x_B!}} < \infty.
       \end{align*}
        In summary, to show that $\displaystyle \lim_{n\to \infty} \pi_n(\Lambda_n)=0$, it suffices to show \eqref{eq:min of the comm class}.

       Now let $x_A^{\eta^i}$ be the the accumulated change of species $A$ by the path $\gamma_{\eta^i}$ as \eqref{eq:about d}.     
Then for fixed $i_0\in I:=\left\{i=\displaystyle \argmin_{2\le i \le L}\{\alpha_{i-1}-\alpha_{i-2}\} \right\}$, the accumulated change $x_A^{\eta^{i_0}}\le 0$ but it must be  $x_A^{\eta^{i_0}}< 0$ by condition 1 in Assumption \ref{assump:cyclic}. For large enough $n$, thus, the state $(n-x_A^{\eta^{i_0}},0)$ is reachable form $(n,0)$ by the transitions in $\eta^{i_0}$. 

To achieve \eqref{eq:min of the comm class}, we will use the transitions $\eta^{i_0}$ multiple times in its order, so that there exists $C>0$ such that for any $n$, $x(n):=(x(n)_A,0)$ is reachable from $(n,0)$, where $x(n)_A$ is some integer such that $x(n)_A < C$. Note that the reachability from $(n,0)$ to $x(n)$ is up to whether the transitions in $\eta^{i_0}$ are available (i.e. non-zero transition rate) over the trajectories from $(n,0)$ to $x(n)$, which is constructed with the transitions in $\eta^{i_0}$. In other words, the reachability holds when there are enough amounts of $A$ and $B$.  Therefore we set  $C>\alpha_{L-1}$. Then when $x_A > C$,  for $x=(x_A,\beta_j)$, we have $\lambda_j(x)>0$ meaning that reaction $z_j\to z_{j+1}$ is available for any $i$. Then using the transitions in $\eta^{i_0}$ consecutively from $(n,0)$, the associated Markov chain is able to reach $x(n)$ implying \eqref{eq:min of the comm class}. We schematically describe this process with Figure \ref{fig:schematics}
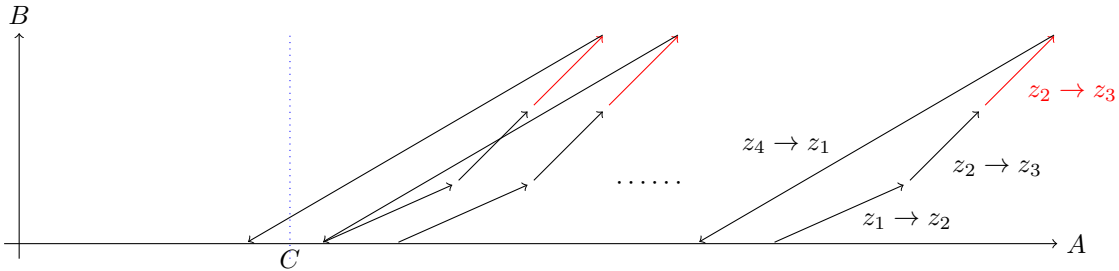
\begin{figure}[!h]
    \begin{equation*}
     \tikzset{state/.style={inner sep=1pt}}
   \begin{tikzpicture}[baseline={(current bounding box.center)}, scale=1]
   \node[state] (01) at (4.2,0.2)  {};
   \node[state] (02) at (6,1)  {};
   \node[state] (03) at (7,2)  {};
   \node[state] (04) at (8,3)  {};
   \node[state] (05) at (3.2,0.2)  {};
   \node[state] (1) at (5.2,0.2)  {};
   \node[state] (2) at (7,1)  {};
   \node[state] (3) at (8,2)  {};
   \node[state] (4) at (9,3)  {};
   \node[state] (5) at (4.2,0.2)  {};
   \node[state] (11) at (10.2,0.2)  {};
   \node[state] (12) at (12,1)  {};
   \node[state] (13) at (13,2)  {};
   \node[state] (14) at (14,3)  {};
   \node[state] (15) at (9.2,0.2)  {};
   \node[state] (20) at (8.6,1)  {\textbf{$\cdots \cdots$}};
   \node[state] (21) at (3.8,0)  {$C$};
   \node[state] (21) at (8.6,4)  {};
   \node[state] (31) at (12,0.5)  {$z_1\to z_2$};
   \node[state] (32) at (13.2,1.2)  {$z_2\to z_3$};
   \node[state] (33) at (14.2,2.2)  {\textcolor{red}{$z_2\to z_3$}};
      \node[state] (34) at (10.4,1.5)  {$z_4\to z_1$};
   \node[state] (35) at (9.2,0.2)  {};
   \path[->]
   (01) edge node {} (02)
    (02) edge node {} (03) 
    (03) edge[red] node {} (04)
    (04) edge node {} (05)
    (1) edge node {} (2)
    (2) edge node {} (3) 
    (3) edge[red] node {} (4)
    (4) edge node {} (5)
    (11) edge node {} (12)
    (12) edge node {} (13) 
    (13) edge[red] node {} (14)
    (14) edge node {} (15); 
   \draw[->] (0,0.2) -- (14,0.2) node[right] {$A$};
   \draw[->] (0.2,0) -- (0.2,3) node[above] {$B$};
   \draw[dotted] (3.8,0)[blue] -- (3.8,3) node[right] {};
  \end{tikzpicture}
\end{equation*}
    \caption{Schematics of the trajectories generated by the path $\eta^{i_0}$ when $i_0=2$. For $C>\alpha_{L-1}$, each reaction $z_j\to z_{j+1}$ used for the path $\eta^{i_0}$ is available at $x=(x_A,\beta_j)$.}
    \label{fig:schematics}
\end{figure}
\end{proof}

\section*{Acknowledgements}
\noindent This work was supported by National Science Foundation grants DMS-2152103 and DMS-2348164.

\bibliographystyle{plain}

\begin{thebibliography}{10}

\bibitem{aldous1986shuffling}
David Aldous and Persi Diaconis.
\newblock Shuffling cards and stopping times.
\newblock {\em The American Mathematical Monthly}, 93(5):333--348, 1986.

\bibitem{anderson2019discrepancies}
David~F Anderson and Daniele Cappelletti.
\newblock Discrepancies between extinction events and boundary equilibria in
  reaction networks.
\newblock {\em Journal of mathematical biology}, 79(4):1253--1277, 2019.

\bibitem{anderson2023new}
David~F Anderson, Daniele Cappelletti, Wai-Tong~Louis Fan, and Jinsu Kim.
\newblock A new path method for exponential ergodicity of markov processes on
  {$\mathbb Z^{d}$}, with applications to stochastic reaction networks.
\newblock {\em arXiv e-prints}, pages arXiv--2309, 2023.

\bibitem{anderson2020stochastically}
David~F. Anderson, Daniele Cappelletti, and Jinsu Kim.
\newblock Stochastically modeled weakly reversible reaction networks with a
  single linkage class.
\newblock {\em Journal of Applied Probability}, 57(3):792--810, 2020.

\bibitem{anderson2020tier}
David~F. Anderson, Daniele Cappelletti, Jinsu Kim, and Tung~D. Nguyen.
\newblock Tier structure of strongly endotactic reaction networks.
\newblock {\em Stochastic Processes and their Applications},
  130(12):7218--7259, 2020.

\bibitem{anderson2018non}
David~F Anderson, Daniele Cappelletti, Masanori Koyama, and Thomas~G Kurtz.
\newblock Non-explosivity of stochastically modeled reaction networks that are
  complex balanced.
\newblock {\em Bulletin of mathematical biology}, 80:2561--2579, 2018.

\bibitem{anderson2010product}
David~F Anderson, Gheorghe Craciun, and Thomas~G Kurtz.
\newblock Product-form stationary distributions for deficiency zero chemical
  reaction networks.
\newblock {\em Bulletin of mathematical biology}, 72(8):1947--1970, 2010.

\bibitem{anderson2018some}
David~F Anderson and Jinsu Kim.
\newblock Some network conditions for positive recurrence of stochastically
  modeled reaction networks.
\newblock {\em SIAM Journal on Applied Mathematics}, 78(5):2692--2713, 2018.

\bibitem{mixing_And_Kim}
David~F. Anderson and Jinsu Kim.
\newblock Mixing times for two classes of stochastically modeled reaction
  networks.
\newblock {\em Mathematical Biosciences and Engineering}, 20(3):4690--4713,
  2023.

\bibitem{aspandiiarov1996passage}
S~Aspandiiarov, Roudolf Iasnogorodski, and M~Menshikov.
\newblock Passage-time moments for nonnegative stochastic processes and an
  application to reflected random walks in a quadrant.
\newblock {\em The Annals of Probability}, 24(2):932--960, 1996.

\bibitem{awazu2010discreteness}
Akinori Awazu and Kunihiko Kaneko.
\newblock Discreteness-induced slow relaxation in reversible catalytic reaction
  networks.
\newblock {\em Physical Review E}, 81(5):051920, 2010.

\bibitem{berestycki2014lectures}
Nathana{\"e}l Berestycki.
\newblock Lectures on mixing times.
\newblock {\em Cambridge University}, 31, 2014.

\bibitem{bibbona2020stationary}
Enrico Bibbona, Jinsu Kim, and Carsten Wiuf.
\newblock Stationary distributions of systems with discreteness-induced
  transitions.
\newblock {\em Journal of The Royal Society Interface}, 17(168):20200243, 2020.

\bibitem{bradley2005basic}
Richard~C. Bradley.
\newblock {Basic Properties of Strong Mixing Conditions. A Survey and Some Open
  Questions}.
\newblock {\em Probability Surveys}, 2(none):107 -- 144, 2005.

\bibitem{bryngelson1987spin}
Joseph~D Bryngelson and Peter~G Wolynes.
\newblock Spin glasses and the statistical mechanics of protein folding.
\newblock {\em Proceedings of the National Academy of sciences},
  84(21):7524--7528, 1987.

\bibitem{chen1999lifting}
Fang Chen, L{\'a}szl{\'o} Lov{\'a}sz, and Igor Pak.
\newblock Lifting markov chains to speed up mixing.
\newblock In {\em Proceedings of the thirty-first annual ACM symposium on
  Theory of computing}, pages 275--281, 1999.

\bibitem{chen2004markov}
Mufa Chen.
\newblock {\em From Markov chains to non-equilibrium particle systems}.
\newblock World scientific, 2004.

\bibitem{den2012probability}
Frank Den~Hollander.
\newblock Probability theory: The coupling method.
\newblock {\em Lecture notes available online (http://websites. math.
  leidenuniv. nl/probability/lecturenotes/CouplingLectures. pdf)}, 2012.

\bibitem{ding2011mixing}
Jian Ding and Yuval Peres.
\newblock Mixing time for the ising model: a uniform lower bound for all
  graphs.
\newblock {\em Annales de l'IHP Probabilit{\'e}s et statistiques},
  47(4):1020--1028, 2011.

\bibitem{ekman1963transport}
Aarne Ekman, Jussi Rastas, and SIMO SALMINEN.
\newblock Transport of ions across membranes: Stationary distribution of ions
  across a membrane in the system sodium chloride--potassium
  chloride--hydrochloric acid--water.
\newblock {\em Nature}, 200(4911):1070--1073, 1963.

\bibitem{fan2023constrained}
Wai-Tong~(Louis) Fan, Yifan~Johnny Yang, and Chaojie Yuan.
\newblock Constrained langevin approximation for the togashi-kaneko model of
  autocatalytic reactions.
\newblock {\em Mathematical Biosciences and Engineering}, 20(3):4322--4352,
  2023.

\bibitem{gheissari2018exponentially}
Reza Gheissari, Eyal Lubetzky, and Yuval Peres.
\newblock Exponentially slow mixing in the mean-field swendsen-wang dynamics.
\newblock In {\em Proceedings of the Twenty-Ninth Annual ACM-SIAM Symposium on
  Discrete Algorithms}, pages 1981--1988. SIAM, 2018.

\bibitem{gillespie1976general}
Daniel~T Gillespie.
\newblock A general method for numerically simulating the stochastic time
  evolution of coupled chemical reactions.
\newblock {\em Journal of computational physics}, 22(4):403--434, 1976.

\bibitem{gupta2014comparison}
Ankur Gupta and James~B Rawlings.
\newblock Comparison of parameter estimation methods in stochastic chemical
  kinetic models: examples in systems biology.
\newblock {\em AIChE Journal}, 60(4):1253--1268, 2014.

\bibitem{hayes2005general}
Thomas~P Hayes and Alistair Sinclair.
\newblock A general lower bound for mixing of single-site dynamics on graphs.
\newblock {\em The Annals of Applied Probability}, page 931–952, 2007.
\newblock Preliminary version appeared in Proceedings of IEEE FOCS 2005
  511–520. MR2326236.

\bibitem{kabanov2006lower}
Yuri Kabanov, Robert Liptser, Jordan Stoyanov, and A~Yu Veretennikov.
\newblock On lower bounds for mixing coefficients of markov diffusions.
\newblock {\em From Stochastic Calculus to Mathematical Finance: The Shiryaev
  Festschrift}, pages 623--633, 2006.

\bibitem{kim2024path}
Minjoon Kim and Jinsu Kim.
\newblock A path method for non-exponential ergodicity of markov chains and its
  application for chemical reaction systems.
\newblock {\em arXiv preprint arXiv:2402.05343}, 2024.

\bibitem{levin2017markov}
David~A Levin and Yuval Peres.
\newblock {\em Markov chains and mixing times}, volume 107.
\newblock American Mathematical Soc., 2017.

\bibitem{lyons2017probability}
Russell Lyons and Yuval Peres.
\newblock {\em Probability on trees and networks}, volume~42.
\newblock Cambridge University Press, 2017.

\bibitem{menshikov1996passage}
M~Menshikov and RJ~Williams.
\newblock Passage-time moments for continuous non-negative stochastic processes
  and applications.
\newblock {\em Advances in applied probability}, 28(3):747--762, 1996.

\bibitem{meyn1992stability}
Sean~P Meyn and Richard~L Tweedie.
\newblock Stability of markovian processes i: Criteria for discrete-time
  chains.
\newblock {\em Advances in Applied Probability}, 24(3):542--574, 1992.

\bibitem{meyn1993stability}
Sean~P Meyn and Richard~L Tweedie.
\newblock Stability of markovian processes iii: Foster--lyapunov criteria for
  continuous-time processes.
\newblock {\em Advances in Applied Probability}, 25(3):518--548, 1993.

\bibitem{mitrophanov2007convergence}
Alexander~Y Mitrophanov and Mark Borodovsky.
\newblock Convergence rate estimation for the tkf91 model of biological
  sequence length evolution.
\newblock {\em Mathematical biosciences}, 209(2):470--485, 2007.

\bibitem{montenegro2006mathematical}
Ravi Montenegro, Prasad Tetali, et~al.
\newblock Mathematical aspects of mixing times in markov chains.
\newblock {\em Foundations and Trends{\textregistered} in Theoretical Computer
  Science}, 1(3):237--354, 2006.

\bibitem{NorrisMC97}
James Norris.
\newblock {\em {Markov Chains}}.
\newblock Cambridge University Press, 1997.

\bibitem{pauleve2014dynamical}
Lo{\"\i}c Paulev{\'e}, Gheorghe Craciun, and Heinz Koeppl.
\newblock Dynamical properties of discrete reaction networks.
\newblock {\em Journal of mathematical biology}, 69:55--72, 2014.

\bibitem{peres2015mixing}
Yuval Peres and Perla Sousi.
\newblock Mixing times are hitting times of large sets.
\newblock {\em Journal of Theoretical Probability}, 28:488--519, 2015.

\bibitem{ramanan2018bounds}
Kavita Ramanan and Aaron Smith.
\newblock Bounds on lifting continuous-state markov chains to speed up mixing.
\newblock {\em Journal of Theoretical Probability}, 31:1647--1678, 2018.

\bibitem{saloff1997lectures}
Laurent Saloff-Coste.
\newblock Lectures on finite {M}arkov chains.
\newblock In P.~Bernard, editor, {\em Lectures on Probability Theory and
  Statistics}, pages 301--413. Springer, 1997.

\bibitem{subramanian2020slow}
Sandhyaa Subramanian, Hemashree Golla, Kalivarathan Divakar, Adithi Kannan,
  David de~Sancho, and Athi~N Naganathan.
\newblock Slow folding of a helical protein: Large barriers, strong internal
  friction, or a shallow, bumpy landscape?
\newblock {\em The Journal of Physical Chemistry B}, 124(41):8973--8983, 2020.

\bibitem{tsunoda2021exponentially}
Kenkichi Tsunoda.
\newblock Exponentially slow mixing and hitting times of rare events for a
  reaction--diffusion model.
\newblock {\em arXiv preprint arXiv:2105.12965}, 2021.

\bibitem{Han16}
Ramon van Handel.
\newblock Probability in high dimension, December 21, 2016.
\newblock Lecture notes retrieved from
  \url{https://web.math.princeton.edu/~rvan/APC550.pdf}.

\bibitem{veretennikov1997polynomial}
A~Yu Veretennikov.
\newblock On polynomial mixing bounds for stochastic differential equations.
\newblock {\em Stochastic processes and their applications}, 70(1):115--127,
  1997.

\bibitem{vialaret2020convergence}
Marie Vialaret and Florian Maire.
\newblock On the convergence time of some non-reversible markov chain monte
  carlo methods.
\newblock {\em Methodology and Computing in Applied Probability},
  22:1349--1387, 2020.

\bibitem{wilson2004mixing}
David~Bruce Wilson.
\newblock Mixing times of lozenge tiling and card shuffling markov chains.
\newblock {\em The Annals of Applied Probability}, 14(1):274--325, 2004.

\bibitem{xu2023full}
Chuang Xu, Mads~Christian Hansen, and Carsten Wiuf.
\newblock Full classification of dynamics for one-dimensional continuous-time
  markov chains with polynomial transition rates.
\newblock {\em Advances in Applied Probability}, 55(1):321--355, 2023.

\end{thebibliography}


\end{document}